\theoremstyle{plain}
\newtheorem{thm}{Theorem}[section]
\newtheorem{lem}[thm]{Lemma}
\newtheorem{prop}[thm]{Proposition}
\newtheorem{rmk}[thm]{Remark}
\def\D{\mathrm{D}}
\def\M{\mathscr{M}}
\def\N{\mathscr{N}}
\def\c{\mathrm{c}}
\def\d{\mathrm{d}}
\def\h{\mathrm{h}}
\def\Rset{\mathbb{R}}
\def\Sset{\mathbb{S}}
\def\Zset{\mathbb{Z}}
\def\hot{\mathrm{h.o.t}}
\def\epsilon{\varepsilon}
\DeclareMathOperator{\sign}{sign}
\DeclareMathOperator{\sech}{sech}
\DeclareMathOperator{\csch}{csch}
\DeclareMathOperator{\sn}{sn}
\DeclareMathOperator{\cn}{cn}
\DeclareMathOperator{\dn}{dn}
\def\theequation{\arabic{section}.\arabic{equation}}
\begin{document}


\title[
Perturbations of codimension-two bifurcations]%
{Periodic perturbations of codimension-two bifurcations with a double zero eigenvalue
 in dynamical systems with symmetry}

\author{Kazuyuki Yagasaki}

\address{Department of Applied Mathematics and Physics, Graduate School of Informatics,
Kyoto University, Yoshida-Honmachi, Sakyo-ku, Kyoto 606-8501, JAPAN}
\email{yagasaki@amp.i.kyoto-u.ac.jp}

\date{\today}
\subjclass[2020]{34C23; 37G15; 37G25; 37G40; 34D10; 34E10; 34H05; 70Q05}
\keywords{Bifurcation; codimension-two; periodic perturbation; symmetry; Melnikov method;
 homoclinic orbit; heteroclinic orbit; periodic orbit}

\begin{abstract}
We study bifurcation behavior
 in periodic perturbations of two-dimensional symmetric systems
 exhibiting codimension-two bifurcations with a double eigenvalue
 when the frequencies of the perturbation terms are small.
We transform the periodically perturbed system to a simpler one
 which is a periodic perturbation of the normal form
 for codimension-two bifurcations with a double zero eigenvalue and symmetry,
 and apply the subharmonic and homoclinic Melnikov methods
 to analyze bifurcations occurring in the system.
In particular, we show that there exist transverse homoclinic or heteroclinic orbits,
 which yield chaotic dynamics, in wide parameter regions.
These results can be applied to three or higher-dimensional systems
 and even to infinite-dimensional systems
 with the assistance of center manifold reduction and the invariant manifold theory.
We illustrate our theory for a pendulum
 subjected to position and velocity feedback control
 when the desired position is periodic in time.
We also give numerical computations by the computer tool \texttt{AUTO}
 to demonstrate the theoretical results.
\end{abstract}

\maketitle

\section{Introduction}
Codimension-two bifurcations
 are fundamental and interesting phenomena in dynamical systems
 and have been studied extensively,
 since the seminal papers of Arnold \cite{A72} and Takens \cite{T74}.
Such bifurcations can occur not only in finite-dimensional systems
 but also in infinite-dimensional systems.
See, e.g., \cite{C81,CLW94,GH83,HI11,K04,W03} and references therein
 for mathematical explanations on the theory and applications.
Two parameters are needed at least
 for a codimension-two bifurcation point to be generically contained
 in the parameter space.
Among of them, codimension-two bifurcations occurring
 when the Jacobian matrices of the vector fields at equilibria have a double zero eigenvalue
 are especially important and now well understood.
In particular, different bifurcation diagrams are obtained,
 depending on whether the systems are symmetric or not \cite{C81,CLW94,GH83}.
Vast literature on applications exhibiting this type of bifurcation is also found
 (see, e.g., Section~20.6A of \cite{W03}).
Similar codimension-two bifurcations
 can occur for periodic orbits or diffeomorphisms \cite{BRS96,K04}.

Moreover, we notice
 that infinitely many bifurcation points of this type can exist
 in the two-dimensional parameter space.
Actually, it was shown in \cite{Y99} that
 this behavior occurs in the three-dimensional system
\begin{equation}
\begin{split}
&
\dot{z}_1=z_2,\\
&
\dot{z}_2=-\sin z_1-\delta_0 z_2+z_3,\\
&
\dot{z}_3=-\alpha z_3+\gamma(\theta_\d-z_1)-\delta_1z_2,
\end{split}
\quad
z=(z_1,z_2,z_3)\in\Sset^1\times\Rset\times\Rset,
\label{eqn:ex}
\end{equation}
where $\alpha,\delta_0>0$ and $\delta_1,\gamma,\theta_\d$ are constants
 and $\Sset^1=\Rset/2\pi\Zset$.
Here the system~\eqref{eqn:ex} represents a pendulum
 driven by a servo-motor and subjected to position and velocity feedback control
 with the desired position and velocity given by $z_1=\theta_\d$ and $z_2=0$,
 where $z_1$ and $z_2$, respectively,
 represent the position and angular velocity of the pendulum;
 $z_3$ the torque servo-motor;
 $\alpha$ the reciprocal of the time constant of the servo-motor;
 $\delta_0$ the damping constant of the pendulum
 resulting from servo-motor friction and other sources;
 and $\gamma$ and $\delta_1$, respectively,
 the position and velocity gain constants of the feedback control.

In this paper, we study bifurcation behavior
 in periodic perturbations of two-dimensional symmetric systems
 exhibiting codimension-two bifurcations with a double zero eigenvalue.
Specifically, we consider two-dimensional nonautonomous systems of the form
\begin{align}
\dot{x}=f(x;\mu)+\epsilon g(x,\omega t),\quad
x
\in\Rset^2,
\label{eqn:sys}
\end{align}
where $\epsilon$ is a small parameter such that $0<\epsilon\ll 1$,
 $\omega>0$ is a constant,
 and $\mu\in\Rset^2$ represents a control parameter vector.
Here $f:\Rset^2\times\Rset^2\to\Rset^2$ is $C^4$ and symmetric around $x=0$, i.e.,
\begin{equation}
f(-x;\mu)=-f(x;\mu),
\label{eqn:sym}
\end{equation}
which yields
\begin{equation}
f(0;\mu)=0,
\label{eqn:f0}
\end{equation}
so that the origin $x=0$ is an equilibrium when $\epsilon=0$.
Moreover, the Jacobian matrix of $f(x;\mu)$ at $x=0$ with $\mu=0$
 is assumed to be the $2\times 2$ Jordan normal form with a double zero eigenvalue,
\begin{equation}
\D_x f(0;0)=
\begin{pmatrix}
0 & 1\\
0 & 0
\end{pmatrix}=:J.
\label{eqn:J}
\end{equation}
In such a situation, bifurcation behavior occurring in \eqref{eqn:sys} when $\epsilon=0$
 has been studied extensively and is well described in several textbooks such as \cite{C81,CLW94,GH83,W03}.

We also assume that $g:\Rset^2\times\Rset\to\Rset^2$ is $C^2$,
 and $g(x,\phi)$ is $2\pi$-periodic in $\phi$ and has mean zero in $\phi$ for any $x\in\Rset^2$, i.e.,
\begin{align}
\int_0^{2\pi} g(x,\phi)\,\d\phi=0.
\label{eqn:mean}
\end{align}
We are interested in the case in which the frequency $\omega$ is sufficiently small
 such that $\omega=O(\epsilon^{1/4})$.
We transform \eqref{eqn:sys} to a simpler system
 which is a periodic perturbation of the normal form
 for codimension-two bifurcations with a double zero eigenvalue and symmetry,
 and apply the subharmonic and homoclinic Melnikov methods \cite{GH83,W03,Y96}
 to analyze bifurcations occurring in \eqref{eqn:sys}.
About the subharmonic Melnikov method,
 we use the version of \cite{Y96},
 which was further sophisticated from the classical theory of \cite{GH83}
 and especially enables us to easily determine the stability of detected subharmonic orbits.
The reader should consult \cite{Y96} for its details
 if he or she is not familiar with this version of the theory.
Among the  theoretical results presented here,
 we show that there exist transverse homoclinic or heteroclinic orbits
 to periodic orbits, which yield chaotic dynamics, in wide parameter regions.
Moreover, we show that many subharmonic orbits can be born at saddle-node bifurcations
 due to the periodic perturbations.
We remark that when $\omega=O(1)$,
 such codimension-two bifurcations without symmetry,
 called the Bogdanov-Takens bifurcations, were studied in \cite{Y02}
 and transverse homoclinic orbits were proven to exist only
 in exponentially small parameter regions.

These results can be applied to three or higher-dimensional systems
 and even to infinite-dimensional systems
 with the assistance of center manifold reduction \cite{C81,GH83,HI11,K04,W03}
 and the invariant manifold theory \cite{BLZ98,BLZ99,E13,F71,F74,W94}.
We illustrate our theory for the three-dimensional system \eqref{eqn:ex}
 with the desired position given by
\begin{equation}
\theta_\d=\epsilon\beta\cos\omega t+\theta_0,\quad
\mbox{$\theta_0=0$ or $\pi$,}
\label{eqn:td}
\end{equation}
where $\beta>0$ is a constant.
Here the reader may think that the third equation of \eqref{eqn:sys} should also be changed to
\[
\dot{z}_3=-\alpha z_3+\gamma(\theta_\d-z_1)+\delta_1(\dot{\theta}_\d-z_2),
\]
but its right hand side is rewritten as
\begin{align*}
-\alpha z_3+\gamma(\epsilon\tilde{\beta}\cos(\omega t+\tilde{\phi})+\theta_0-z_1)-\delta_1z_2,
\end{align*}
where
\[
\tilde{\beta}=\sqrt{\beta^2+\omega^2\beta^2},\quad
\tilde{\phi}=\arctan\omega,
\]
so that we only obtain the same equation with different value of $\beta$
 after the time shift $t\to t+\tilde{\phi}/\omega$.
In addition, we give numerical computations by the computer tool \texttt{AUTO} \cite{DO12}
 to demonstrate the theoretical results.
Periodic and chaotic motions in a similar system
 with a negligibly small time constant of the servo motor,
 i.e., $\alpha,\gamma,\delta_1\gg 1$, were studied
 by using the subharmonic and homoclinic Melnikov methods
 as well as the averaging method \cite{SVM07,Y96b} in \cite{Y94,Y96a}.

The rest of this paper is organized as follows:
In Section~2, we transform \eqref{eqn:sys} to the simpler system
\begin{equation}
\dot{y}_1=y_2,\quad
\dot{y}_2={\nu}_1y_1+{\nu}_2y_2+s_1y_1^3+s_2y_1^2y_2
+\epsilon h(\bar{\omega} t),
\label{eqn:pnf}
\end{equation}
where $\nu_1,\nu_2$ represent new control parameters instead of $\mu$,
 $h(\phi)$ is $2\pi$-periodic and the higher-order terms are ignored.
See \eqref{eqn:nu} and \eqref{eqn:h} in Section~2
 for definitions of $\nu_1,\nu_2,\bar{\omega},s_j$, $j=1,2$, and $h(\phi)$.
In particular, $s_j=1$ or $-1$, $j=1,2$.
Using the subharmonic and homoclinic Melnikov methods,
 we analyze bifurcations occurring in \eqref{eqn:pnf} for $s_1=1$ and $s_1=-1$
 in Sections~3 and 4, respectively.
We discuss three or higher-dimensional symmetric systems
 in which the linear parts have a double zero eigenvalue,
 and describe a treatment to apply our theory
 with assistance of center manifold reduction and the invariant manifold theory in Section~5.
Finally, we apply our theory to the example \eqref{eqn:ex} with \eqref{eqn:td}
 and give numerical results to demonstrate the validity of the theoretical ones in Section~6.

\section{Periodic Perturbation of the Normal Form}

In this section we show
 that the system \eqref{eqn:sys} is transformed
 into the periodic perturbation \eqref{eqn:pnf} of the normal form
 when higher-order terms are ignored.
We carry out a transformation used
 to obtain the normal form of codimensio-two bifurcations
 with a double zero eigenvalue and symmetry for the autonomous part.

Let $x_j$ and $\mu_j$ be the $j$th elements of $x$ and $\mu$, respectively, for $j=1,2$ and let
\begin{align*}
&
a_{jkl}=\frac{\partial ^3f_j}{\partial x_1^k\partial x_2^l}(0;0),\quad
l=3-k,\quad
k=0,1,2,3,\\
&{b}_{jkl}=\frac{\partial ^2 f_j}{\partial x_k\partial\mu_l}(0;0),\quad
k,l=1,2,
\end{align*}
so that
\begin{align}
f(x;\mu)=Jx+&
\begin{pmatrix}
\frac{1}{6}a_{130}x_1^3+\frac{1}{2}a_{121}x_1^2x_2+\frac{1}{2}a_{112}x_1x_2^2
+\frac{1}{6}a_{103}x_2^3\\[1ex]
\frac{1}{6}a_{230}x_1^3+\frac{1}{2}a_{221}x_1^2x_2+\frac{1}{2}a_{212}x_1x_2^2
+\frac{1}{6}a_{203}x_2^3\end{pmatrix}\notag\\
&+
\begin{pmatrix}
(b_{111}\mu_1+b_{112}\mu_2)x_1+(b_{121}\mu_1+b_{122}\mu_2)x_2\\
(b_{211}\mu_1+b_{212}\mu_2)x_1+(b_{221}\mu_1+b_{222}\mu_2)x_2
\end{pmatrix}+\hot,
\label{eqn:f}
\end{align}
where `$\hot$' represents higher-order terms than $O(|x|^3)$ and $O(|\mu|\,|x|)$.
Note that by \eqref{eqn:sym} and \eqref{eqn:f0}
 no constant and even-order terms with respect to $x$ appear in \eqref{eqn:f}.
Assume that
\begin{align}
c:=\tfrac{1}{6}a_{230}\neq 0,\quad
d:=\tfrac{1}{2}(a_{221}+a_{130})\neq 0.
\label{eqn:cd}
\end{align}
Using the transformation
\[
x=\xi+
\begin{pmatrix}
(\tfrac{1}{6}a_{121}+\tfrac{1}{12}a_{212})\xi_1^3
 +(\tfrac{1}{4}a_{112}+\tfrac{1}{12}a_{203})\xi_1^2\xi_2\\[1ex]
-\tfrac{1}{6}a_{130}\xi_1^3+\tfrac{1}{4}a_{212}\xi_1^2\xi_2
 +\tfrac{1}{6}a_{203}\xi_1\xi_2^2-\tfrac{1}{6}a_{103}\xi_2^3
\end{pmatrix},\quad
\xi=(\xi_1,\xi_2)\in\Rset^2,
\]
in \eqref{eqn:sys}, we have
\begin{align}
\dot{\xi}=&J\xi+
\begin{pmatrix}
0\\
c\xi_1^3+d\xi_1^2\xi_2
\end{pmatrix}\notag\\
&+
\begin{pmatrix}
(b_{111}\mu_1+b_{112}\mu_2)\xi_1+(b_{121}\mu_1+b_{122}\mu_2)\xi_2\\
(b_{211}\mu_1+b_{212}\mu_2)\xi_1+(b_{221}\mu_1+b_{222}\mu_2)\xi_2
\end{pmatrix}
+\epsilon g_0(\omega t)+\hot,
\label{eqn:sys1}
\end{align}
where `$\hot$' represents higher-order terms than $O(|\xi|^3)$, $O(|\mu|\,|\xi|)$ and $O(\epsilon)$,
 and $g_0(\phi)=g(0,\phi)$.
Moreover, we carry out the transformation
\begin{align*}
&
\xi=\eta-
\begin{pmatrix}
0\\
(b_{111}\mu_1+b_{112}\mu_2)\eta_1+(b_{121}\mu_1+b_{122}\mu_2)\eta_2
\end{pmatrix},\quad
\eta=(\eta_1,\eta_2)\in\Rset^2,
\end{align*}
in \eqref{eqn:sys1} to obtain
\begin{equation}
\begin{split}
\dot{\eta}_1=&\eta_2+\epsilon g_{01}(\omega t)+\hot,\\
\dot{\eta}_2=&\tilde{\mu}_1\eta_1+\tilde{\mu}_2\eta_2+c\eta_1^3+d\eta_1^2\eta_2
 +\epsilon g_{02}(\omega t)+\hot,
\end{split}
\label{eqn:sys2}
\end{equation}
where $g_{0j}(\phi)$ is the $j$th component of $g_0(\phi)$ and
\[
\tilde{\mu}_1=b_{211}\mu_1+b_{212}\mu_2,\quad
\tilde{\mu}_2=(b_{111}+b_{221})\mu_1+(b_{112}+b_{222})\mu_2.
\]
Finally, changing the time variable as $t\to|c/d|\,t$ and letting
\begin{equation}\label{TransEta}
y_1=\frac{|d|}{\sqrt{|c|}}\eta_1,\quad
y_2=\frac{|d|^2}{|c|^{3/2}}(\eta_2+\epsilon g_{01}(\omega t))+\hot
\end{equation}
in \eqref{eqn:sys2}, we obtain
\[
\dot{y}_1=y_2,\quad
\dot{y}_2=\nu_1 y_1+\nu_2 y_2+s_1y_1^3+s_2y_1^2y_2+\epsilon h(\bar{\omega}t)+\hot,
\]
where
\begin{equation}
\begin{split}
&
\nu_1=\left|\frac{d}{c}\right|^2\tilde{\mu}_1
 =\left|\frac{d}{c}\right|^2(b_{211}\mu_1+b_{212}\mu_2),\\
&
\nu_2=\left|\frac{d}{c}\right|\tilde{\mu}_2
 =\left|\frac{d}{c}\right|((b_{111}+b_{221})\mu_1+(b_{112}+b_{222})\mu_2),\\
&
s_1=\sign c,\quad
s_2=\sign d,\quad
\bar{\omega}=\left|\frac{d}{c}\right|\omega
\end{split}
\label{eqn:nu}
\end{equation}
and
\begin{equation}
h(\phi)=\frac{|d|^3}{|c|^{5/2}}g_{02}(\phi). 
\label{eqn:h}
\end{equation}
Here we have assumed that $\omega=o(1)$.
We obtain the following proposition.

\begin{prop}
\label{prop:2a}
Assume that \eqref{eqn:cd} holds.
Then the sytem \eqref{eqn:sys} is transformed into \eqref{eqn:pnf}
 when the higher-order terms are ignored.
Moreover,
\begin{equation}
\int_0^{2\pi}h(\phi)\d\phi=0.
\label{eqn:prop2a}
\end{equation}
\end{prop}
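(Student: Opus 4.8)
\emph{First part.} The plan is to assemble the sequence of transformations already carried out above into a single statement. Under \eqref{eqn:cd} the factors $|d|/\sqrt{|c|}$, $|d|^2/|c|^{3/2}$ and the time rescaling $t\to|c/d|\,t$ are well defined, and I would argue that the three successive near-identity changes of variables --- the cubic polynomial substitution $x=\xi+P(\xi)$, the $\mu$-linear shift, and the linear rescaling \eqref{TransEta} --- carry \eqref{eqn:sys} into \eqref{eqn:pnf} modulo terms of strictly higher order. The key structural input is the symmetry: by \eqref{eqn:sym} and \eqref{eqn:f0} the expansion \eqref{eqn:f} of $f$ has no constant or even-degree terms in $x$, so the only cubic obstruction to the normal form is the listed set of monomials, which $x=\xi+P(\xi)$ eliminates, leaving a remainder $O(|\xi|^4)+O(|\mu|\,|\xi|)+O(\epsilon|\xi|)+O(\epsilon^2)$. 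Since $g$ is $C^2$ I would write $g(\xi+P(\xi),\phi)=g(0,\phi)+O(|\xi|)$, absorb the $O(|\xi|)$ part into the remainder, and identify the leading forcing as $\epsilon g_0(\omega t)$ with $g_0(\phi)=g(0,\phi)$; after the remaining linear steps this becomes $\epsilon h(\bar{\omega} t)$ with $h$ as in \eqref{eqn:h}. One extra check is needed in \eqref{TransEta}: the added term $\epsilon g_{01}(\omega t)$ is there exactly so that $\dot y_1=y_2$ holds to the retained order, and differentiating it yields $\epsilon\omega g_{01}'(\omega t)$, which is $o(\epsilon)$ because $\omega=o(1)$ (indeed $\omega=O(\epsilon^{1/4})$), hence negligible. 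Collecting the constants gives \eqref{eqn:nu} and \eqref{eqn:pnf}.

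\emph{Mean-zero.} For \eqref{eqn:prop2a} I would simply combine \eqref{eqn:h} and \eqref{eqn:mean}: by \eqref{eqn:h}, $h(\phi)=(|d|^3/|c|^{5/2})\,g_{02}(\phi)$, where $g_{02}$ is the second component of $g_0(\phi)=g(0,\phi)$; evaluating \eqref{eqn:mean} at $x=0$ gives $\int_0^{2\pi}g(0,\phi)\,\d\phi=0$, and taking its second component yields $\int_0^{2\pi}g_{02}(\phi)\,\d\phi=0$; multiplying by the nonzero constant $|d|^3/|c|^{5/2}$ gives $\int_0^{2\pi}h(\phi)\,\d\phi=0$. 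Note that none of the coordinate changes above can spoil this, since the polynomial and $\mu$-linear transformations do not depend on $\phi$ and the rescalings only multiply $h$ by a constant.

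\emph{Main obstacle.} The genuinely delicate point is the order bookkeeping in the first part: one must make sure that \emph{every} term produced by composing the three substitutions and the time rescaling that is not already displayed in \eqref{eqn:pnf} is of strictly higher order in the coupled small quantities $|\xi|$, $|\mu|$, $\epsilon$ (with $\omega$ slaved to $\epsilon$ through $\omega=O(\epsilon^{1/4})$), so that discarding the higher-order terms is legitimate. Once \eqref{eqn:h} and \eqref{eqn:mean} are available, the mean-zero assertion itself is immediate.
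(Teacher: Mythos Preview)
Your proposal is correct and follows the same route as the paper: the transformation claim is exactly the concatenation of the changes of variables already written out in Section~2 (so nothing new needs to be proved there), and the mean-zero identity \eqref{eqn:prop2a} is immediate from \eqref{eqn:h} and \eqref{eqn:mean}, precisely as you argue. The paper's own proof consists of the single remark that only \eqref{eqn:prop2a} remains and that it is obvious from \eqref{eqn:mean}; your additional comments on order bookkeeping and the $\omega=o(1)$ check for the $\epsilon g_{01}$ shift are accurate elaborations rather than a different method.
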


\begin{proof}
It remains to prove \eqref{eqn:prop2a} but it is obvious from \eqref{eqn:mean}.
\end{proof}

\begin{figure}[t]
\includegraphics[width=50mm]{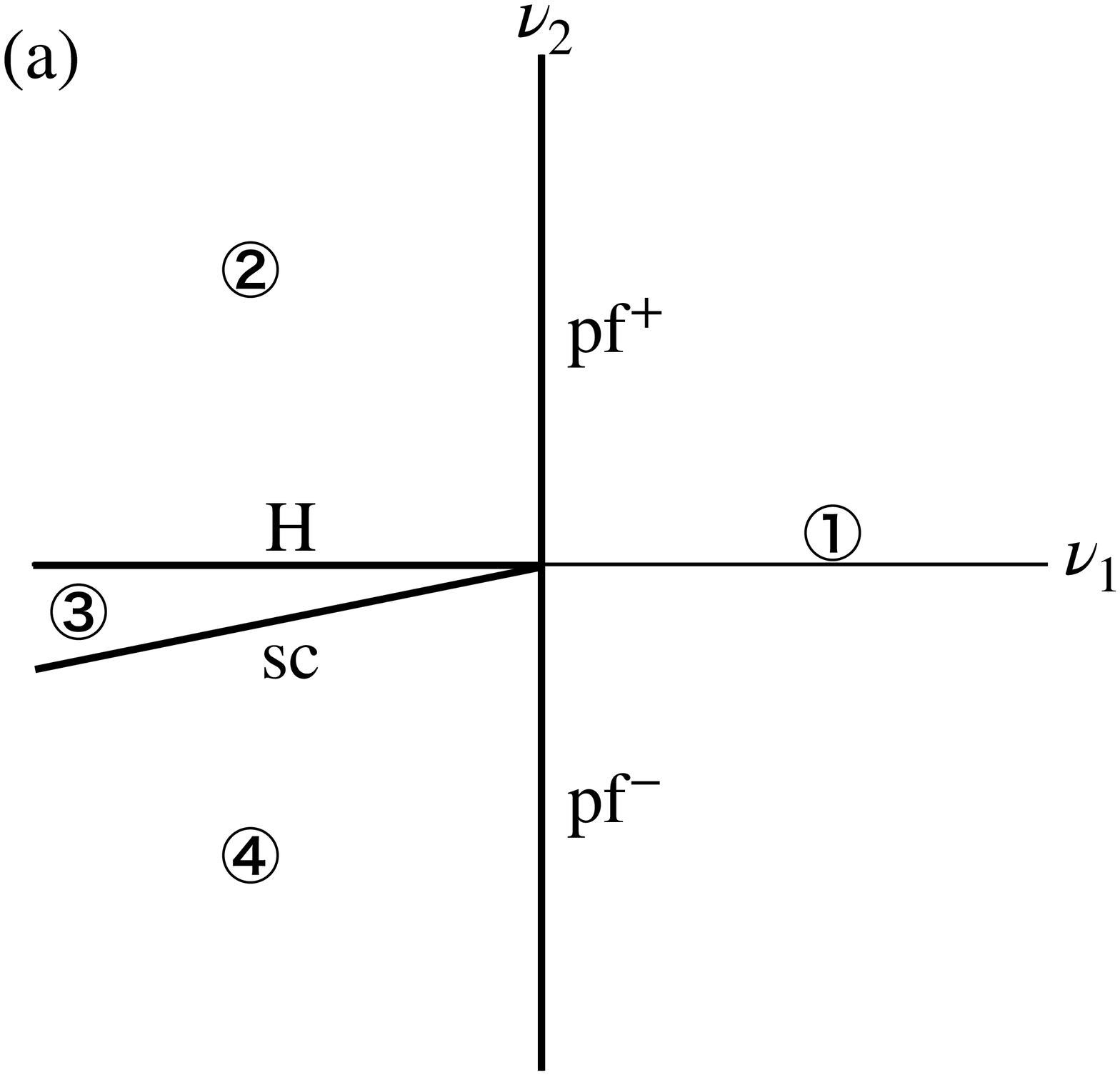}\\[2ex]
\includegraphics[width=100mm]{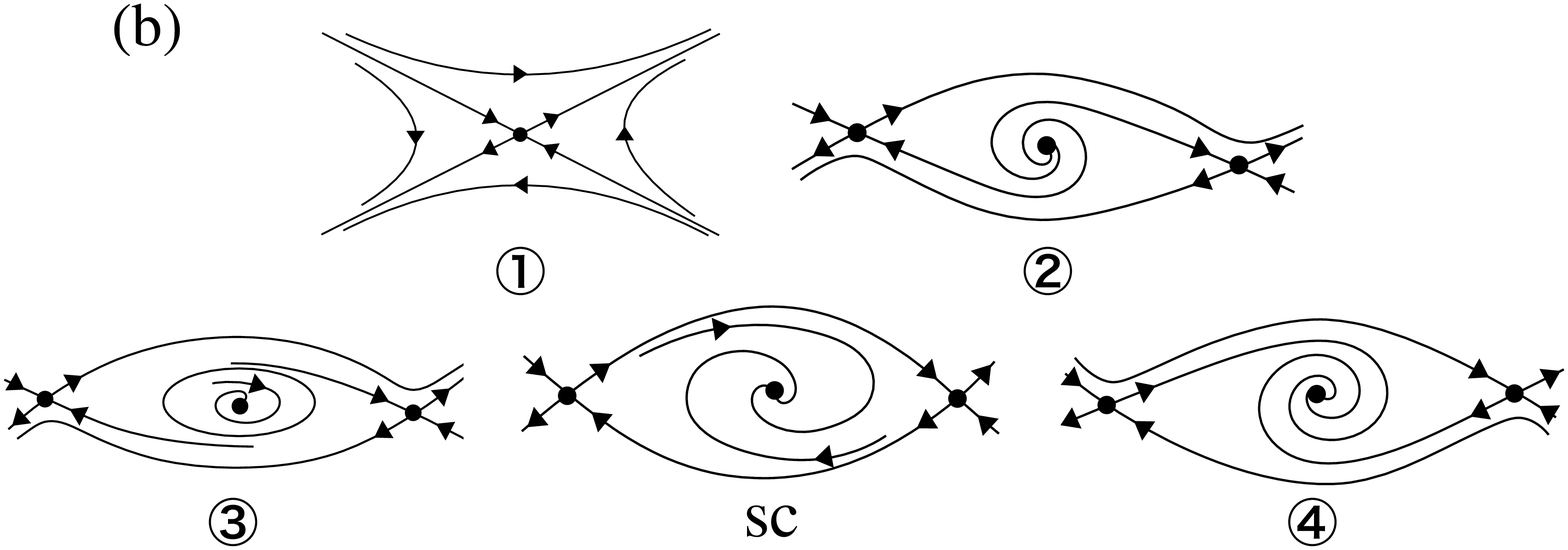}
\caption{Bifurcations of \eqref{eqn:pnf} with $s_1=s_2=1$ for $\epsilon=0$:
(a) Bifurcation sets; (b) Phase portraits.
$\mathrm{pf}^{\pm}$, H and sc
 represent pitchfork, Hopf and heteroclinic bifurcations, respectively.}
\label{fig:2a}
\end{figure}

\begin{figure}[t]
\includegraphics[width=50mm]{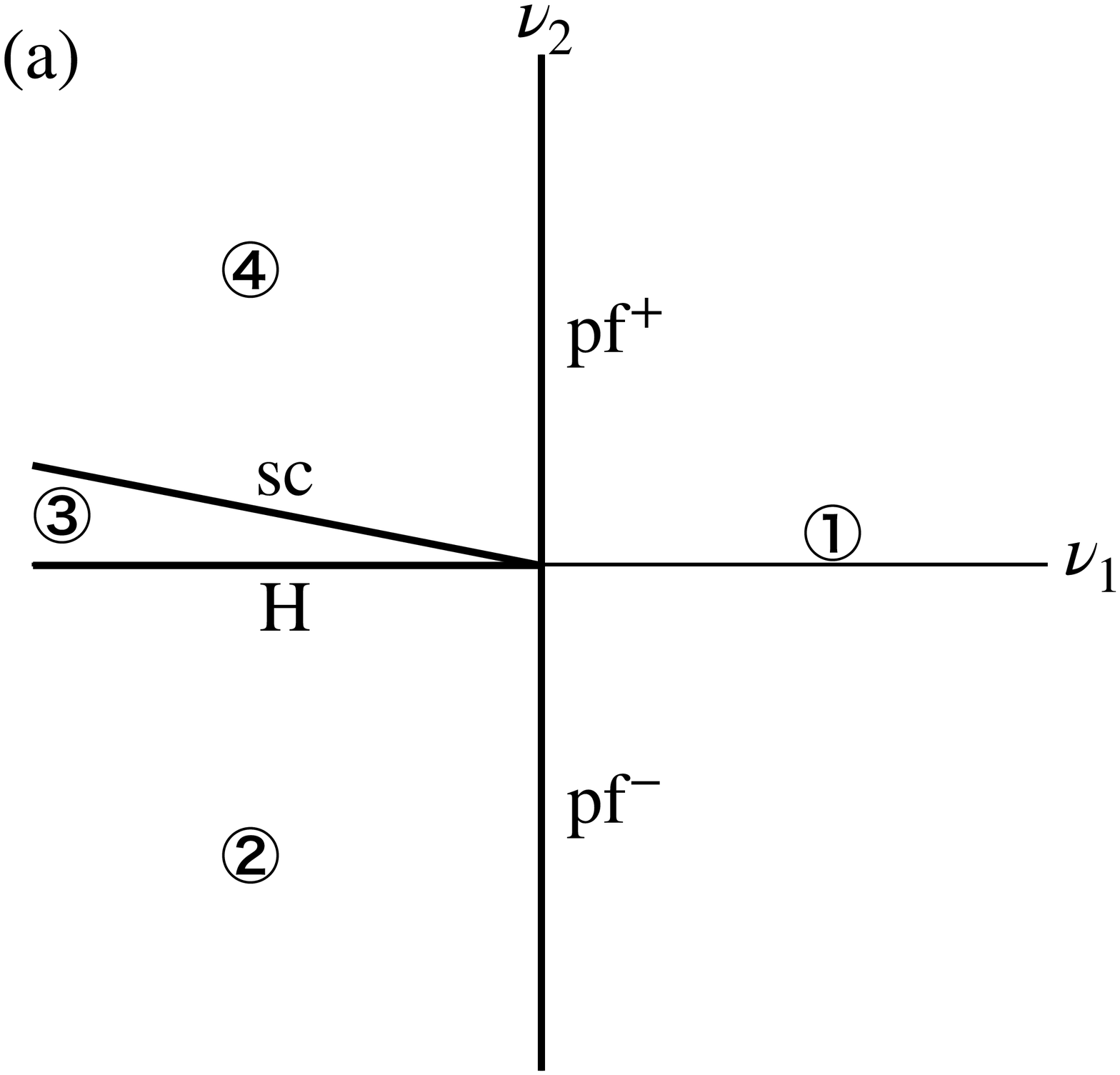}\\[2ex]
\includegraphics[width=100mm]{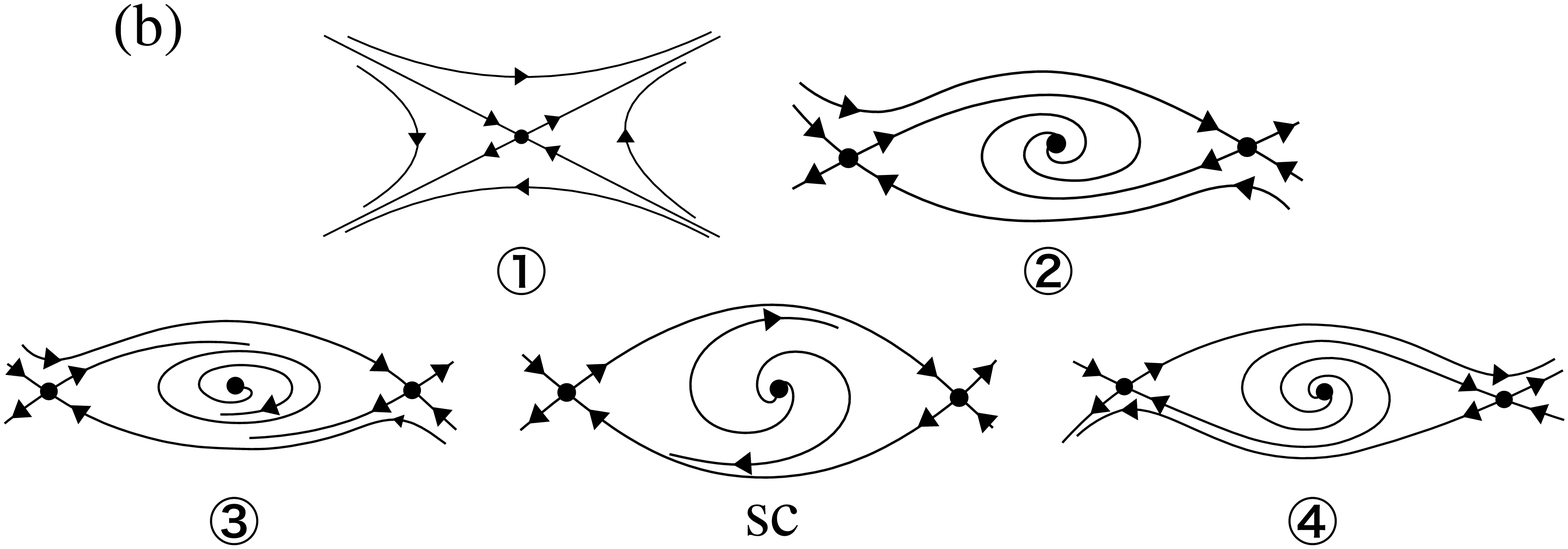}
\caption{Bifurcations of \eqref{eqn:pnf} with $s_1=1$ and $s_2=-1$ for $\epsilon=0$:
(a) Bifurcation sets; (b) Phase portraits.
See the caption of Fig.~\ref{fig:2a} for the meaning of the symbols.}
\label{fig:2b}
\end{figure}

\begin{figure}[t]
\begin{minipage}{45mm}
\includegraphics[width=50mm]{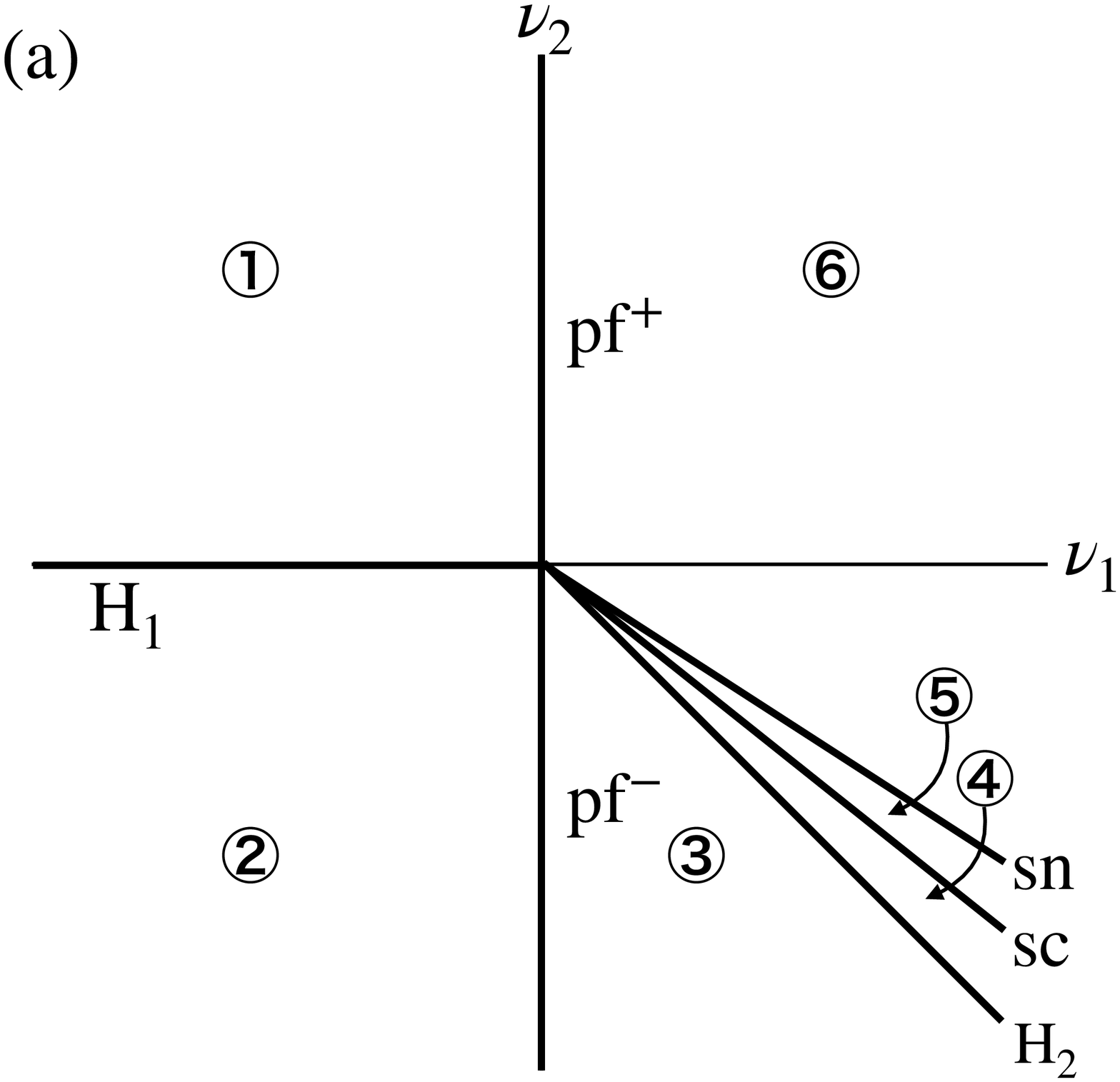}\\[2ex]
\end{minipage}
\begin{minipage}{100mm}
\includegraphics[width=100mm]{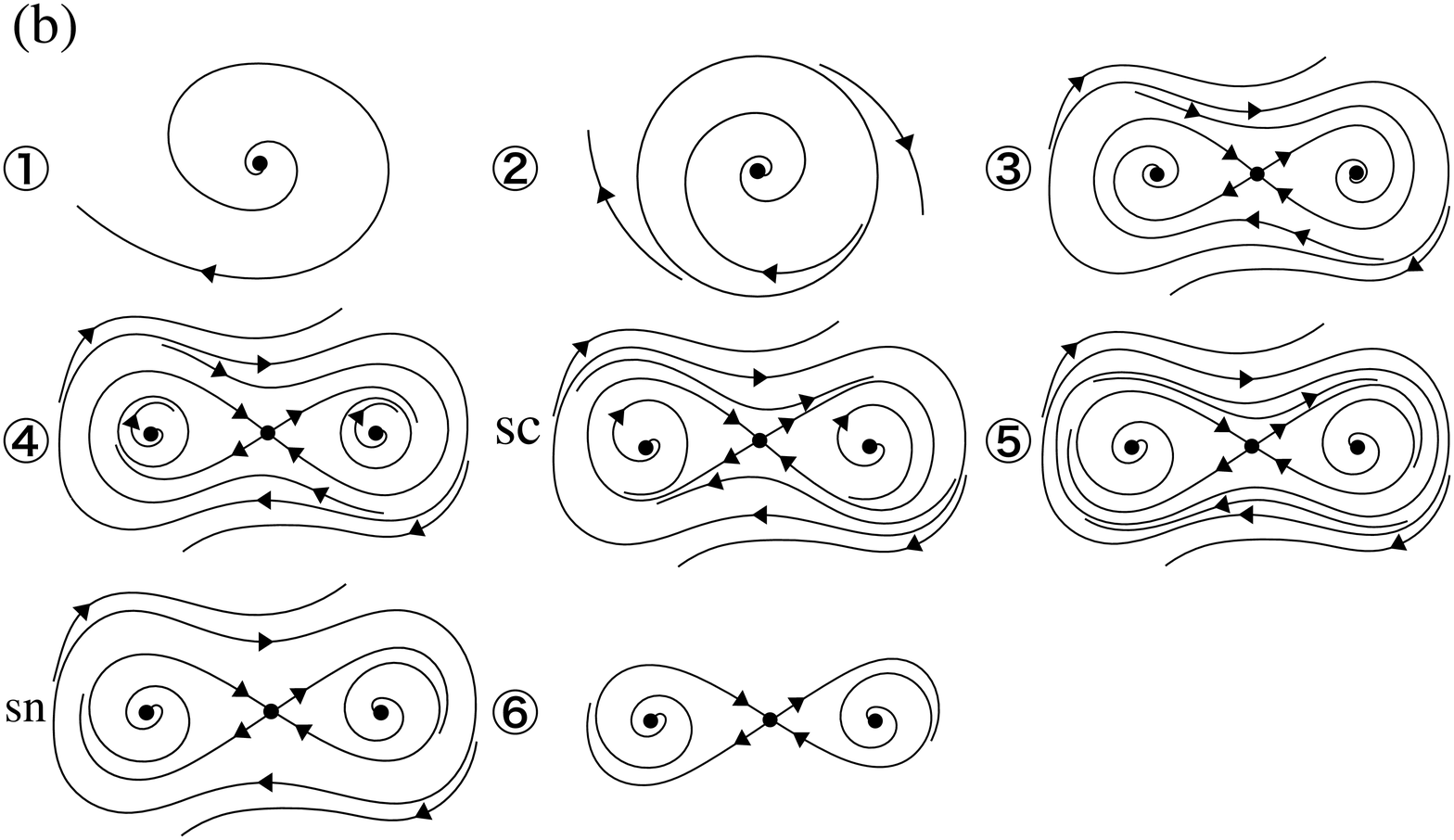}
\end{minipage}
\caption{Bifurcations of \eqref{eqn:pnf} with $s_1=-1$ and $s_2=1$ for $\epsilon=0$:
(a) Bifurcation sets; (b) Phase portraits.
$\mathrm{pf}^{\pm}$, $\mathrm{H}_j$, $j=1,2$, and sc
 represent pitchfork, Hopf and homoclinic bifurcations, respectively,
 while sn represents saddle-node bifurcations of periodic orbits.}
\label{fig:2c}
\end{figure}

We display bifurcation sets and phase portraits of \eqref{eqn:pnf}
 with $\epsilon=0$ and $s_1=1$ (resp. $s_1=-1$) for $s_2=1$ and $-1$, respectively,
 in Figs.~\ref{fig:2a} and \ref{fig:2b} (resp. in Figs.~\ref{fig:2c} and \ref{fig:2d}).
For $s_1=1$, when $s_2=1$ (resp. $s_2=-1$),
(i) a pitchfork bifurcation occurs on $\nu_1=0$ and $\nu_2\neq 0$;
(ii) a Hopf bifurcation occurs on $\nu_2=0$ and $\nu_1<0$;
(iii) a (double) heteroclinic bifurcation (saddle connection) occurs
 near $\nu_2=\nu_1/5<0$ (resp. near $\nu_2=-\nu_1/5>0$).
In region \textcircled{\scriptsize 1}
 there is a saddle;
in region \textcircled{\scriptsize 2}
 there are one source (resp. sink) and two saddles;
in region \textcircled{\scriptsize 3}
 there are one sink (resp. source), two saddles and one unstable (resp. stable) periodic orbit;
in region \textcircled{\scriptsize 4}
 there are one sink (resp. source) and two saddles.
On the curve sc, there are a pair of heteroclinic orbits.

\begin{figure}[t]
\begin{minipage}{45mm}
\includegraphics[width=50mm]{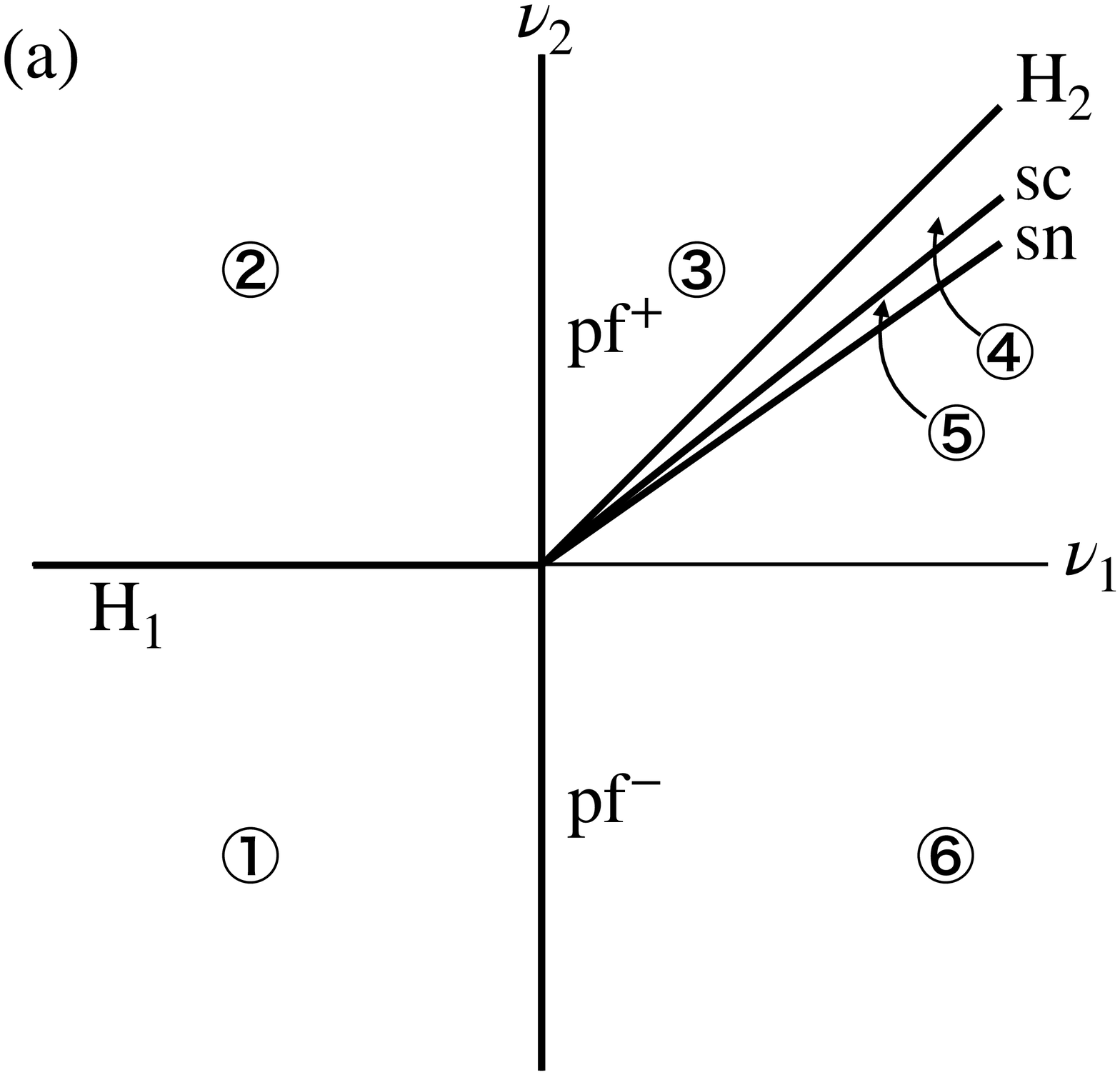}\\[2ex]
\end{minipage}
\begin{minipage}{100mm}
\includegraphics[width=100mm]{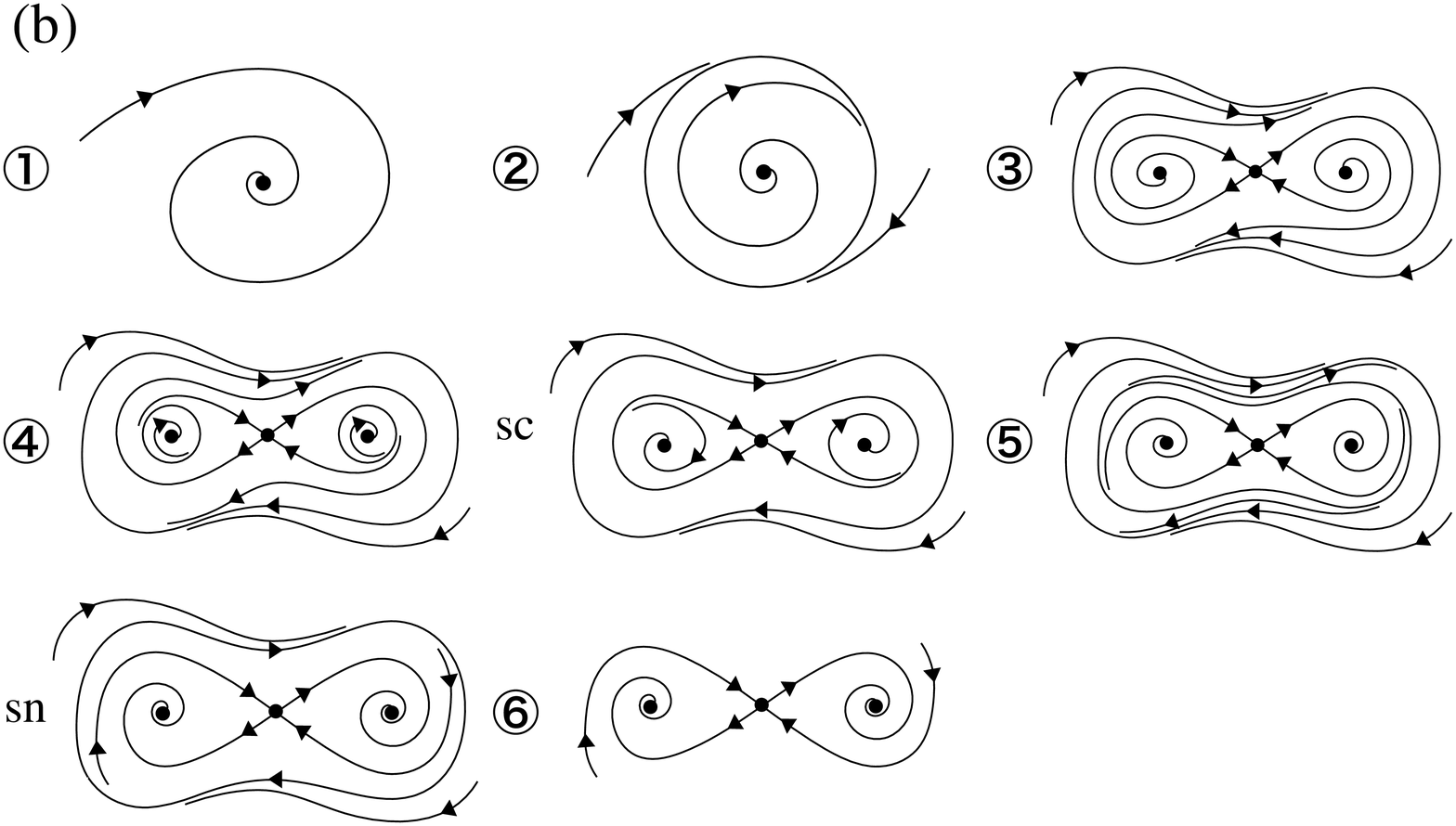}
\end{minipage}
\caption{Bifurcations of \eqref{eqn:pnf} with $s_1=s_2=-1$ for $\epsilon=0$:
(a) Bifurcation sets; (b) Phase portraits.
See the caption of Fig.~\ref{fig:2c} for the meaning of the symbols.}
\label{fig:2d}
\end{figure}

On the other hand, for $s_1=-1$, when $s_2=1$ (resp. $s_2=-1$),
(i) a pitchfork bifurcation occurs on $\nu_1=0$ and $\nu_2\neq 0$;
(ii) a Hopf bifurcation occurs on $\nu_2=0$ and $\nu_1<0$;
(iii) a (double) Hopf bifurcation occurs at $\nu_2=-\nu_1<0$ (resp. $\nu_2=\nu_1>0$);
(iv) a (double) homoclinic bifurcation occurs near $\nu_2=-4\nu_1 /5<0$
 (resp. near $\nu_2=4\nu_1 /5>0$);
(v) a saddle-node bifurcation of periodic orbits occurs
 near $\nu_2=-c\nu_1<0$ (resp. near $\nu_2=c\nu_1>0$) with $c\approx 0.752$.
In region \textcircled{\scriptsize 1}
 there is a source (resp. sink);
in region \textcircled{\scriptsize 2}
 there are a sink (resp. source) and unstable (resp. stable) periodic orbit;
in region \textcircled{\scriptsize 3}
 there are one saddle, two sinks (resp. sources) and one unstable (resp. stable) periodic orbit;
in region \textcircled{\scriptsize 4}
 there are one saddle, two sources (resp. sinks), and one stable and unstable periodic orbits;
in region \textcircled{\scriptsize 5}
 there are one saddle, two sources (resp. sinks) and one unstable (resp. stable) periodic orbit;
in region \textcircled{\scriptsize 6}
 there are one saddle and two sources (resp. sinks).
On the curve sc, there are a pair of homoclinic orbits.
See \cite{C81,GH83} for more details.

\section{Melnikov Analyses for $s_1=1$}\label{section3}

In this and the next sections,
 using the subharmonic and homoclinic Melnikov methods \cite{GH83,W03,Y96},
 we analyze the periodically perturbed system \eqref{eqn:pnf}
 to describe bifurcations occurring in \eqref{eqn:sys} near $(x,\mu)=(0,0)$.
We begin with the case of $s_1=1$.

\subsection{Preliminaries}
Assume that $|\nu_1|,|\nu_2|\ll 1$, $\nu_1<0$, $\nu_2=O(\nu_1)$ and $\epsilon=O(\nu_1^2)$.
Let $\hat{\epsilon}$ be a small positive parameter such that $\nu_1=-\hat{\epsilon}^2$,
 and introduce the new state variables $\zeta=(\zeta_1,\zeta_2)$
 and the new parameter $\hat{\nu}=O(1)$ as
\begin{align}
y_1=\sqrt{-\nu_1}\zeta_1,\quad
y_2=-\nu_1\zeta_2,\quad
\hat{\nu}=-\nu_2/\nu_1.
\label{eqn:sc1}
\end{align}
Rescaling the time variable $t$ as $t\to\sqrt{-\nu_1}\,t$,
 we rewrite \eqref{eqn:pnf} as
\begin{align}
\dot{\zeta}_1=\zeta_2,\quad
\dot{\zeta}_2=-\zeta_1+\zeta_1^3
 +\hat{\epsilon}(\hat{\nu}\zeta_2+s_2\zeta_1^2\zeta_2+\Delta h(\hat{\omega}t)),
\label{eqn:psys1}
\end{align}
where
\begin{equation}
\hat{\omega}
 =\frac{\bar{\omega}}{\hat{\epsilon}}=\left|\frac{d}{c}\right|\frac{\omega}{\hat{\epsilon}}=O(1),\quad
\Delta=\frac{\epsilon}{\hat{\epsilon}^4}=O(1).
\label{eqn:hat}
\end{equation}

When $\hat{\epsilon}=0$, the system~\eqref{eqn:psys1} becomes
\begin{equation}
\dot{\zeta}_1=\zeta_2,\quad
\dot{\zeta}_2=-\zeta_1+\zeta_1^3,
\label{eqn:psys10}
\end{equation}
which is a planar Hamiltonian system with the Hamiltonian
\[
H(\zeta)=\frac{1}{2}\zeta_2^2+\frac{1}{2}\zeta_1^2-\frac{1}{4}\zeta_1^4.
\]
The unperturbed system \eqref{eqn:psys10} has hyperbolic saddles at $\zeta=(\pm 1,0)$
 to which there exists a pair of heteroclinic orbits
\begin{align}
\zeta_\pm^{\h}(t)=
\left(\pm\tanh\frac{t}{\sqrt{2}},
\pm\frac{1}{\sqrt{2}}\sech^2\frac{t}{\sqrt{2}} \right).
\label{eqn:ho1}
\end{align}
Moreover, inside the pair of heteroclinic orbits,
 there exists a one-parameter family of periodic orbits,
\begin{align}
\zeta^k(t)=&\left(\frac{\sqrt{2}\,k}{\sqrt{k^2+1}}\sn\left(\frac{t}{\sqrt{k^2+1}}
\right)\right.,\notag\\
&\quad\left.\frac{\sqrt{2}\,k}{k^2+1}\cn\left(\frac{t}{\sqrt{k^2+1}}\right)
\dn\left(\frac{t}{\sqrt{k^2+1}}\right)\right),\quad k\in(0,1),
\label{eqn:po1}
\end{align}
where $\sn,\cn,\dn$ represent Jacobi's elliptic functions with elliptic modulus $k$.
The period of $\zeta^k(t)$ is given by
\begin{align*}
T^k=4K(k)\sqrt{k^2+1},
\end{align*}
where $K(k)$ is the complete elliptic integral of the first kind,
\[
K(k)=\int_0^{\pi/2}\frac{\d\psi}{\sqrt{1-k^2\sin^2\psi}}.
\]
See \cite{BF54} for necessary information on elliptic functions and complete elliptic integrals.
The phase portrait of \eqref{eqn:psys10} is displayed in Fig.~\ref{fig:3a}.

\begin{figure}[t]
\begin{center}
\includegraphics[scale=0.6]{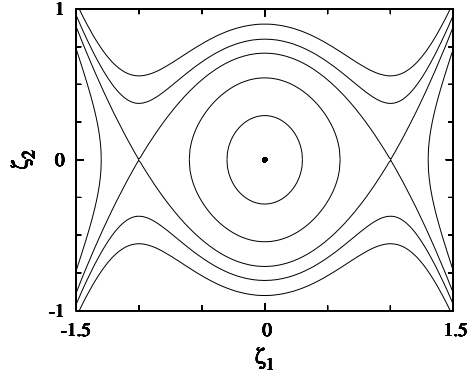}
\caption{Phase portraits of \eqref{eqn:psys10}.}
\label{fig:3a}
\end{center}
\end{figure}

\subsection{Heteroclinic orbits}
We apply the homoclinic Melnikov methods \cite{GH83,W03}.
For $\hat{\epsilon}>0$ sufficiently small
 we easily see that the system~\eqref{eqn:psys1} has hyperbolic periodic orbits,
 of which the stable and unstable manifolds may intersect, near $\zeta=(\pm 1,0)$.
See \cite{GH83,W03}.
We compute the Melnikov functions for $\zeta_\pm^{\h}(t)$ as
\begin{align*}
M_\pm(\phi;\hat{\nu})
=& \int_{-\infty}^\infty
 \zeta_{2\pm}^{\h}(t)\left[\hat{\nu}\zeta_{2\pm}^{\h}(t)
 +s_2\zeta_{1\pm}^{\h}(t)^2\zeta_{2\pm}^{\h}(t)
 +\Delta h(\hat{\omega} t+\phi)\right]\d t\notag\\
=&\frac{4}{3\sqrt{2}}\hat{\nu}+\frac{2\sqrt{2}}{15}s_2\pm\Delta\hat{h}(\phi),
\end{align*}
where
\begin{equation}
\hat{h}(\phi)=\int_{-\infty}^\infty\zeta_{2+}^{\h}(t)h(\hat{\omega} t+\phi)\d t.
\label{eqn:hath}
\end{equation}
We have the following property for the function $\hat{h}(\phi)$.

\begin{lem}
\label{lem:3a}
If $h(\phi)\not\equiv 0$, then
 \begin{equation}
\hat{h}_{\mathrm{max}}=\max_{\phi\in\Sset^1}\hat{h}(\phi)>0
\quad\mbox{and}\quad
\hat{h}_{\mathrm{min}}=\min_{\phi\in\Sset^1}\hat{h}(\phi)<0,
\label{eqn:lem3a}
\end{equation}
where $\Sset^1$ is the circle of length $2\pi$.
\end{lem}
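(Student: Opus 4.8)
The plan is to show that $\hat h$ is continuous and $2\pi$-periodic on $\Sset^1$, has zero mean, and is not identically zero; any such function attains a strictly positive maximum and a strictly negative minimum, which is exactly \eqref{eqn:lem3a}.

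First I would record the soft properties. Since $\zeta_{2+}^{\h}(t)=\tfrac{1}{\sqrt2}\sech^2(t/\sqrt2)$ is even, positive and exponentially decaying---in particular it lies in $L^1(\Rset)$---and $h$ is continuous and bounded, the integral \eqref{eqn:hath} defines a continuous $2\pi$-periodic function of $\phi$. Integrating \eqref{eqn:hath} over $\phi\in[0,2\pi]$ and interchanging the order of integration (Fubini's theorem applies since $\zeta_{2+}^{\h}\in L^1(\Rset)$ and $h$ is bounded), the inner integral $\int_0^{2\pi}h(\hat\omega t+\phi)\,\d\phi$ equals $\int_0^{2\pi}h(\phi)\,\d\phi=0$ by \eqref{eqn:prop2a}, so $\int_0^{2\pi}\hat h(\phi)\,\d\phi=0$.

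The main point is to prove $\hat h\not\equiv 0$ under the hypothesis $h\not\equiv0$. For this I would compute, for each $m\in\Zset$, the $m$-th Fourier coefficient of $\hat h$: interchanging integrals as before and substituting $\psi=\hat\omega t+\phi$ in the $\phi$-integral gives
\[
\frac{1}{2\pi}\int_0^{2\pi}\hat h(\phi)\,e^{-\mathrm{i}m\phi}\,\d\phi
=\Phi(m\hat\omega)\cdot\frac{1}{2\pi}\int_0^{2\pi}h(\psi)\,e^{-\mathrm{i}m\psi}\,\d\psi,
\qquad
\Phi(\xi):=\int_{-\infty}^{\infty}\zeta_{2+}^{\h}(t)\,e^{\mathrm{i}\xi t}\,\d t.
\]
Since $\zeta_{2+}^{\h}$ is even, $\Phi(\xi)=\tfrac{1}{\sqrt2}\int_{-\infty}^{\infty}\sech^2(t/\sqrt2)\cos(\xi t)\,\d t$, and the standard evaluation $\int_{-\infty}^{\infty}\sech^2 u\,\cos(\lambda u)\,\d u=\pi\lambda/\sinh(\pi\lambda/2)$ (obtained by residues, with value $2$ at $\lambda=0$) yields $\Phi(\xi)=\sqrt2\,\pi\,\xi/\sinh(\pi\xi/\sqrt2)>0$ for every $\xi\in\Rset$, since $\xi/\sinh(\pi\xi/\sqrt2)$ is positive for all real $\xi$. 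In particular $\Phi(m\hat\omega)\neq0$ for every $m\in\Zset$ (note $\hat\omega>0$ by \eqref{eqn:hat}). As $h$ has zero mean and $h\not\equiv0$, it is nonconstant, hence has a nonzero Fourier coefficient for some $m\neq0$; by the displayed identity the corresponding Fourier coefficient of $\hat h$ is then nonzero, so $\hat h\not\equiv0$.

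Finally I would conclude as follows: $\hat h$ is continuous on the compact circle $\Sset^1$, so the extrema $\hat h_{\mathrm{max}}$ and $\hat h_{\mathrm{min}}$ are attained; if $\hat h\ge0$ throughout $\Sset^1$, its zero mean together with continuity would force $\hat h\equiv0$, contradicting the previous step, so $\hat h_{\mathrm{min}}<0$, and the same argument applied to $-\hat h$ gives $\hat h_{\mathrm{max}}>0$. The only non-routine ingredient is the nonvanishing (indeed positivity) of $\Phi$, i.e.\ of the Fourier transform of $\sech^2$; this is precisely what makes the averaging transform $h\mapsto\hat h$ injective on mean-zero functions, and is the step where an explicit computation (or an appeal to a table of integrals) is essential.
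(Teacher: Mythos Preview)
Your proof is correct and follows essentially the same route as the paper: both arguments compute the Fourier coefficients of $\hat h$ as the product of the Fourier coefficients of $h$ with the Fourier transform $\Phi(\xi)=\sqrt{2}\,\pi\xi\csch(\pi\xi/\sqrt{2})$ of $\zeta_{2+}^{\h}$, observe that $\Phi$ never vanishes so that $h\not\equiv0$ forces $\hat h\not\equiv0$, and combine this with the zero mean of $\hat h$ (obtained via Fubini and \eqref{eqn:prop2a}) to conclude. Your write-up is in fact somewhat more explicit than the paper's about the soft analytic justifications (continuity, Fubini, the compactness argument for the extrema).
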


\begin{proof}
Noting that it is $C^2$,
 we expand $h(\phi)$ in a Fourie series as
\[
h(\phi)=\sum_{j=-\infty}^\infty h_{j}e^{i j\phi},\quad
h_j=\frac{1}{2\pi}\int_0^{2\pi}h(\phi)e^{-ij\phi}\d\phi,
\]
so that
\[
\hat{h}(\phi)
 =\sum_{j=-\infty}^\infty h_j\tilde{\zeta}_{2+}(j\hat{\omega})e^{i j\phi},
\]
where
\begin{equation}
\tilde{\zeta}_{2+}(\chi)
 =\int_{-\infty}^\infty\zeta_{2+}^{\h}(t)e^{i\chi t}\d t
 =
\begin{cases}
\sqrt{2}\pi\chi\csch(\sqrt{2}\pi\chi/2) & \mbox{for $\chi\neq 0$;}\\
2 & \mbox{for $\chi=0$,}
\end{cases}
\label{eqn:lem3ap}
\end{equation}
which is not zero for any $\chi\in\Rset$.
Here we have used the fact that
 the integral in \eqref{eqn:lem3ap} is uniformly convergent in $j\in\Zset$.
Hence, if $h(\phi)\not\equiv 0$, then $\hat{h}_j\neq 0$ for some $j\in\Zset$ at least
 and consequently $\hat{h}(\phi)\not\equiv 0$.

On the other hand, we easily see that
 the integral in \eqref{eqn:hath} is uniformly convergent in $\phi$
 since the integrand exponentially tends to $0$.
Hence,
\begin{align*}
\int_0^{2\pi}\hat{h}(\phi)\d \phi
=&\int_0^{2\pi}\left[\int_{-\infty}^\infty
 \zeta_{2\pm}^{\h}(t)h(\hat{\omega}t+\phi)\d t \right]\d \phi\\
=&\int_{-\infty}^\infty\zeta_{2\pm}^{\h}(t)
 \left[ \int_{0}^{2\pi} h(\hat{\omega}t+\phi)\d \phi \right]\d t=0,
\end{align*}
which yields \eqref{eqn:lem3a}.
\end{proof}

Assume that $h(\phi)\not\equiv 0$.
If
\[
-\Delta\hat{h}_{\mathrm{max}}
<\frac{4}{3\sqrt{2}}\hat{\nu}+\frac{2\sqrt{2}}{15}s_2
<-\Delta\hat{h}_{\mathrm{min}}
\quad\mbox{and}\quad
\Delta\hat{h}_{\mathrm{min}}
<\frac{4}{3\sqrt{2}}\hat{\nu}+\frac{2\sqrt{2}}{15}s_2
<\Delta\hat{h}_{\mathrm{max}}
\]
i.e.,
\begin{equation}
\left(\frac{3\sqrt{2}}{4}\Delta\hat{h}_{\mathrm{max}}+\frac{1}{5}s_2\right)\nu_1
<\nu_2<\left(\frac{3\sqrt{2}}{4}\Delta\hat{h}_{\mathrm{min}}+\frac{1}{5}s_2\right)\nu_1,\quad
\nu_1<0,
\label{eqn:hcon1a}
\end{equation}
and
\begin{equation}
\left(-\frac{3\sqrt{2}}{4}\Delta\hat{h}_{\mathrm{min}}+\frac{1}{5}s_2\right)\nu_1
<\nu_2<\left(-\frac{3\sqrt{2}}{4}\Delta\hat{h}_{\mathrm{max}}+\frac{1}{5}s_2\right)\nu_1,\quad
\nu_1<0,
\label{eqn:hcon1b}
\end{equation}
then the Melnikov functions $M_\pm(\phi;\hat{\nu})$ cross the zero topologically.
Using arguments given in \cite{GH83,W03},
 we obtain the following result.

\begin{thm}
\label{thm:3a}
Suppose that $h(\phi)\not\equiv 0$.
If condition~\eqref{eqn:hcon1a} $($resp. \eqref{eqn:hcon1b}$)$ holds
 for $|\nu_1|,|\nu_2|,\epsilon>0$ sufficiently small,
 then the left branch of the stable manifold $($resp. of the unstable manifold$)$
 of a periodic orbit near $\zeta=(1,0)$
 intersect the right branch of the unstable manifold $($resp. of the stable manifold$)$
 of a periodic orbit near $\zeta=(-1,0)$ topologically transversely
 in \eqref{eqn:psys1} and hence in \eqref{eqn:sys}.
Moreover, 
 heteroclinic bifurcations occur near the four curves
\begin{align}
&
\nu_2=\left(\frac{3\sqrt{2}}{4}\Delta\hat{h}_{\mathrm{max}}+\frac{1}{5}s_2\right)\nu_1,\quad
\nu_2=\left(\frac{3\sqrt{2}}{4}\Delta\hat{h}_{\mathrm{min}}+\frac{1}{5}s_2\right)\nu_1,\quad
\nu_1<0,
\label{eqn:thm3a1}\\
&
\nu_2=\left(-\frac{3\sqrt{2}}{4}\Delta\hat{h}_{\mathrm{min}}+\frac{1}{5}s_2\right)\nu_1,\quad
\nu_2=\left(-\frac{3\sqrt{2}}{4}\Delta\hat{h}_{\mathrm{max}}+\frac{1}{5}s_2\right)\nu_1,\quad
\nu_1<0.
\label{eqn:thm3a2}
\end{align}
\end{thm}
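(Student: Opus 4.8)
The plan is to run the standard homoclinic Melnikov argument of \cite{GH83,W03}, here applied to a pair of heteroclinic connections, on the rescaled system~\eqref{eqn:psys1}, and then to transfer the conclusion back to~\eqref{eqn:sys}. First I would pass to the autonomous suspension of~\eqref{eqn:psys1} on $\Rset^2\times\Sset^1$ obtained by appending $\dot\phi=\hat\omega$. At $\hat\epsilon=0$ this is the product of the Hamiltonian system~\eqref{eqn:psys10} with a rotation, so the circles $\{(\pm1,0)\}\times\Sset^1$ are normally hyperbolic invariant manifolds whose two-dimensional stable and unstable manifolds coincide along the cylinders swept out by the heteroclinic orbits $\zeta^{\h}_\pm(t)$ of~\eqref{eqn:ho1}. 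By the standard persistence statement (equivalently, the implicit function theorem applied to the time-$(2\pi/\hat\omega)$ map), already invoked in the discussion preceding the theorem, for $\hat\epsilon>0$ small there are hyperbolic periodic orbits $\gamma^{\hat\epsilon}_\pm$ near $\{(\pm1,0)\}\times\Sset^1$ with local invariant manifolds $C^1$-close to the unperturbed ones. The perturbation in~\eqref{eqn:psys1} is of size $\hat\epsilon$ and the integrands defining $M_\pm$ decay exponentially because of the $\sech^2$ factor in~\eqref{eqn:ho1}, so the usual computation identifies the signed separation, measured in a cross-section at phase $\phi$, of the left branch of $W^{\mathrm s}(\gamma^{\hat\epsilon}_+)$ and the right branch of $W^{\mathrm u}(\gamma^{\hat\epsilon}_-)$ along $\zeta^{\h}_+$ with $\hat\epsilon\,M_+(\phi;\hat\nu)/\|\dot\zeta^{\h}_+(0)\|+O(\hat\epsilon^2)$, and analogously $M_-$ along $\zeta^{\h}_-$ measures the left branch of $W^{\mathrm u}(\gamma^{\hat\epsilon}_+)$ against the right branch of $W^{\mathrm s}(\gamma^{\hat\epsilon}_-)$; this is precisely the computation already carried out just before the statement.

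Next I would read off the geometric consequence. By Lemma~\ref{lem:3a} one has $\hat h_{\mathrm{min}}<0<\hat h_{\mathrm{max}}$, and $\Delta>0$ by~\eqref{eqn:hat}, so condition~\eqref{eqn:hcon1a} says exactly that the constant $\tfrac{4}{3\sqrt{2}}\hat\nu+\tfrac{2\sqrt{2}}{15}s_2$ lies strictly between $-\Delta\hat h_{\mathrm{max}}$ and $-\Delta\hat h_{\mathrm{min}}$, whence $M_+(\,\cdot\,;\hat\nu)$ takes strictly positive and strictly negative values on $\Sset^1$; likewise~\eqref{eqn:hcon1b} forces $M_-$ to change sign. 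For $\hat\epsilon$ small the $O(\hat\epsilon^2)$ remainder cannot destroy this sign change, so the relevant branches of the invariant manifolds must intersect, and near a sign-changing zero of the separation function the two curves in the Poincar\'e section cross each other rather than remaining tangent; this is the notion of a topologically transverse intersection used in \cite{GH83,W03}, which in turn yields the chaotic dynamics mentioned in the introduction. Undoing the smooth rescaling~\eqref{eqn:sc1} and the time reparametrization, and invoking Proposition~\ref{prop:2a} to return to~\eqref{eqn:sys}, gives the intersection assertion for $|\nu_1|,|\nu_2|,\epsilon$ sufficiently small. Finally, the heteroclinic bifurcations occur at the parameter values where $M_+$ (resp. $M_-$) ceases to change sign, i.e. where $\tfrac{4}{3\sqrt{2}}\hat\nu+\tfrac{2\sqrt{2}}{15}s_2$ equals $-\Delta\hat h_{\mathrm{max}}$ or $-\Delta\hat h_{\mathrm{min}}$ (resp. $\Delta\hat h_{\mathrm{min}}$ or $\Delta\hat h_{\mathrm{max}}$); substituting $\hat\nu=-\nu_2/\nu_1$ and clearing denominators turns these into the four curves~\eqref{eqn:thm3a1} and~\eqref{eqn:thm3a2}, up to the $O(\hat\epsilon)$ error inherent in the Melnikov approximation.

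The one step that needs genuine care is the bookkeeping in reducing the full system to~\eqref{eqn:psys1}: I would need to check that the higher-order terms truncated in passing from~\eqref{eqn:sys} to~\eqref{eqn:pnf} remain subdominant after the rescaling~\eqref{eqn:sc1} under the joint scaling $\nu_1=-\hat\epsilon^2$, $\nu_2=O(\nu_1)$, $\epsilon=O(\nu_1^2)$. Here the symmetry~\eqref{eqn:sym} is essential: since $f$ contains no even-order terms in $x$, the discarded terms are of order $|x|^5$, $|\mu|\,|x|^3$, $|\mu|^2|x|$, $\epsilon\,|x|$ and higher, each of which becomes $O(\hat\epsilon^2)$ in~\eqref{eqn:psys1} and hence negligible beside the $O(\hat\epsilon)$ perturbation, whereas quadratic or quartic terms, were they present, would enter at the same order as the Melnikov term. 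One also has to observe that the exponential-decay estimates in the Melnikov integral and the persistence/implicit-function estimates are uniform over the compact ranges of $\hat\nu$, $\Delta$ and $\hat\omega$ in play, so the various small neighbourhoods do not shrink as these parameters vary. The remaining ingredient, passing from a robust sign change of $M_\pm$ to a topologically transverse crossing of the invariant manifolds, is taken over verbatim from \cite{GH83,W03}; note that simple zeros of $M_\pm$, and hence genuine transversality, are not available in general because $\hat h$ may have degenerate extrema, but a sign change is all that is needed for the conclusion and for the associated chaotic dynamics.
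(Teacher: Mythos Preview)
Your proposal is correct and follows exactly the route the paper takes: the paper's proof consists of the Melnikov computation displayed immediately before the theorem together with the one-line invocation ``Using arguments given in \cite{GH83,W03}, we obtain the following result,'' and you have simply spelled out that invocation in detail, including the suspension, persistence of the hyperbolic periodic orbits, the sign-change argument via Lemma~\ref{lem:3a}, and the check that the truncated higher-order terms stay $O(\hat\epsilon^2)$ after rescaling. Your added remark about uniformity in $\hat\nu,\Delta,\hat\omega$ and about topological (rather than geometric) transversality is a welcome clarification but does not depart from the paper's approach.
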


\begin{rmk}
\label{rmk:3a}
In the overlap parameter region between \eqref{eqn:hcon1a} and \eqref{eqn:hcon1b},
 there exists a heteroclinic cycle and consequently topologically transverse homoclinic orbits
 to the periodic orbits near $\zeta=(\pm 1,0)$.
See Section~$26.1$ of {\rm\cite{W03}} for more details on the fact.
Such homoclinic orbits also yield chaotic motions in \eqref{eqn:psys1} and \eqref{eqn:sys}
 like geometrically transverse ones.
See {\rm\cite{BW95}} for more details.
\end{rmk}

\begin{figure}[t]
\includegraphics[width=45mm]{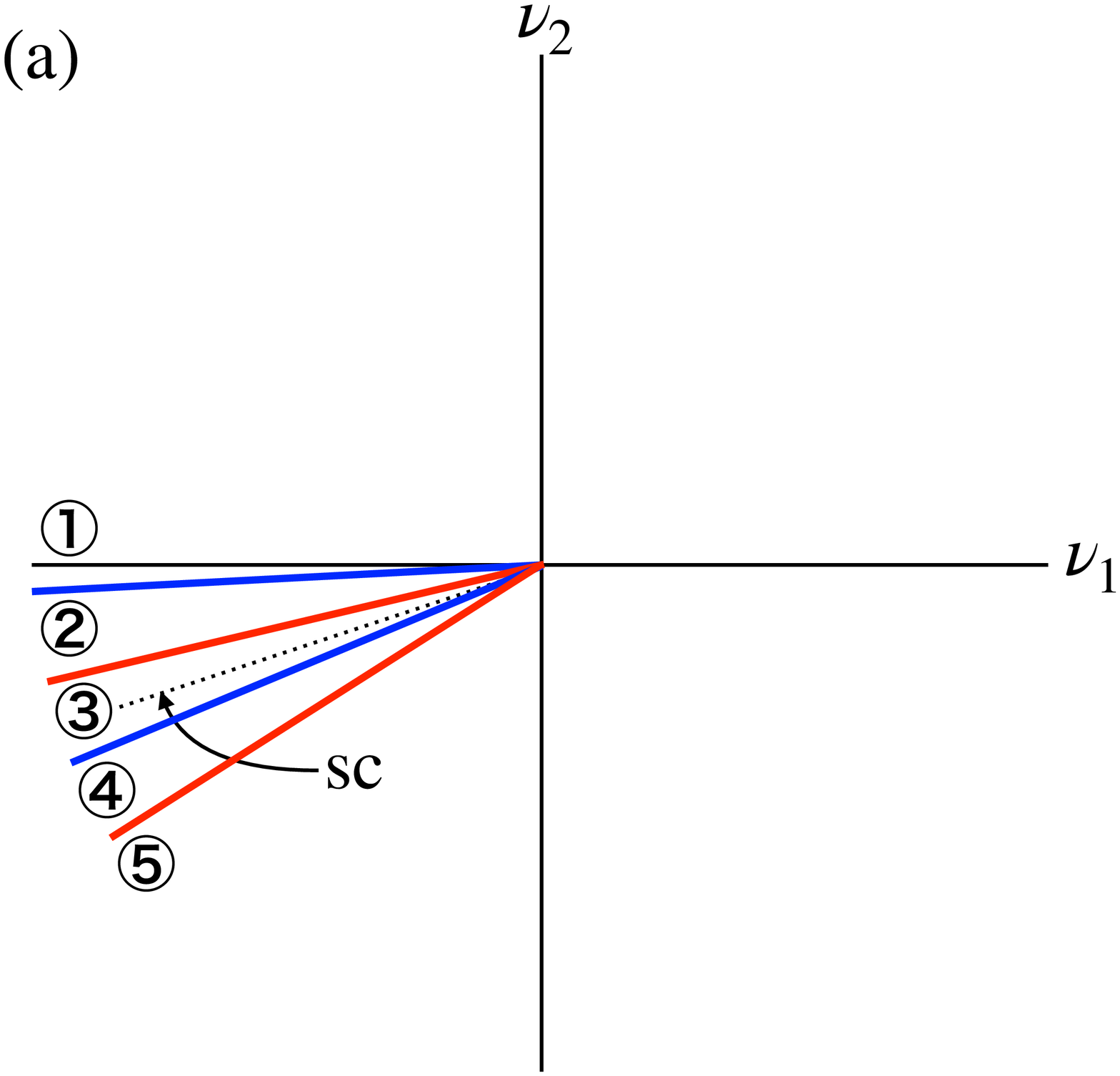}\quad
\includegraphics[width=45mm]{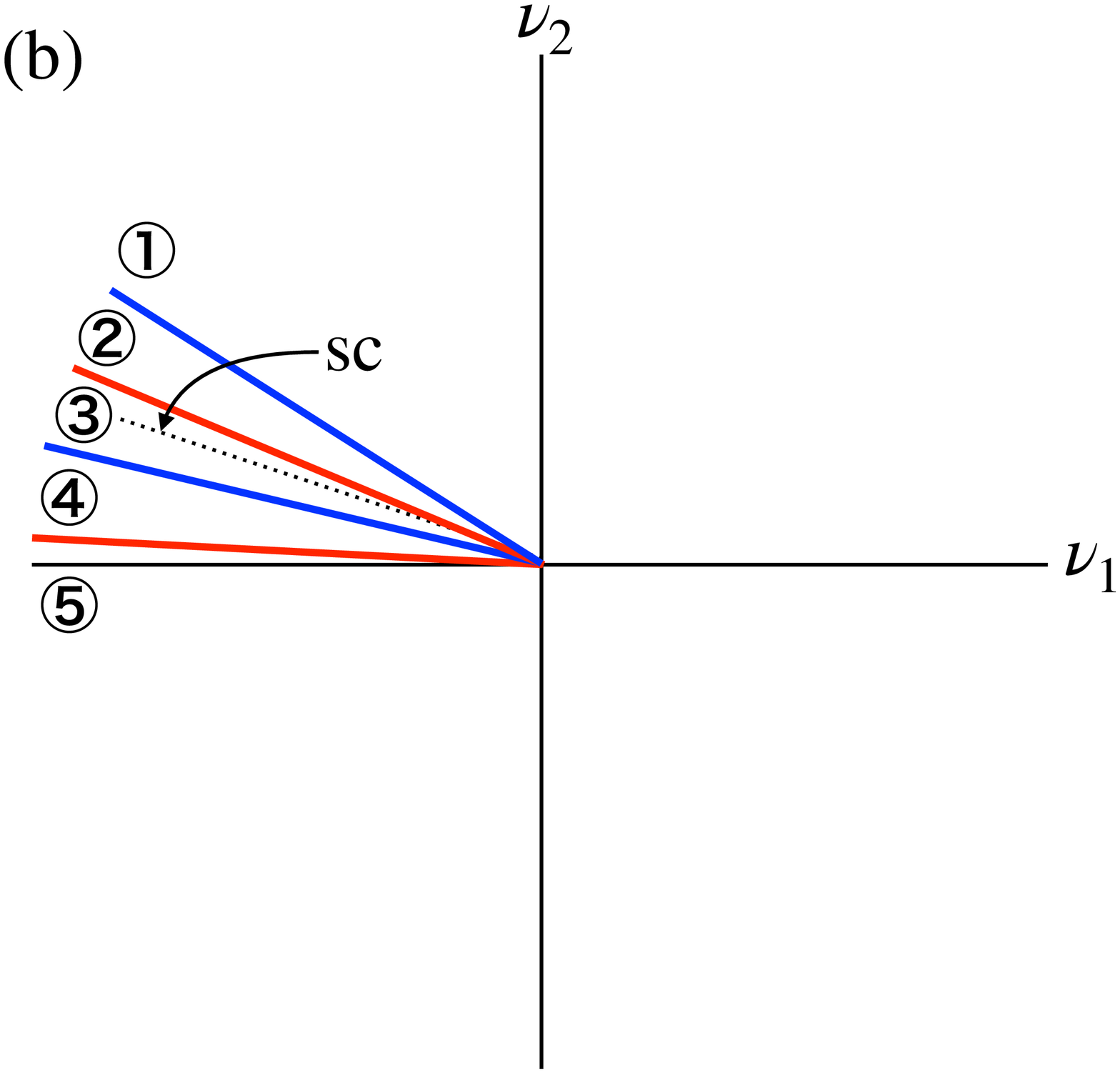}\\[2ex]
\includegraphics[width=100mm]{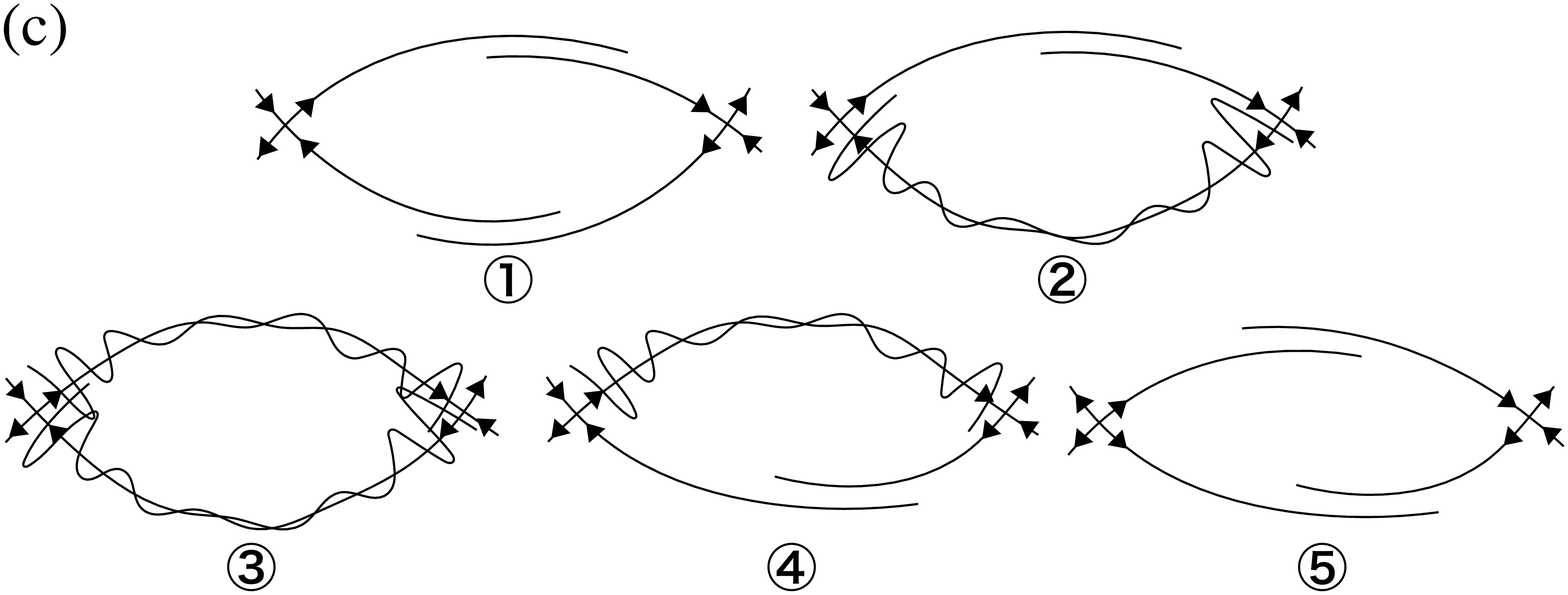}
\caption{Heteroclinic bifurcations in \eqref{eqn:psys1}:
(a) Approximate bifurcation sets for $s_2=1$; (b) for $s_2=-1$;
(c) stable and unstable manifolds for a Poincar\'e map.}
\label{fig:thm3a}
\end{figure}

Figures~\ref{fig:thm3a}(a) and (b)
 show the approximate heteroclinic bifurcation sets detected by Theorem~\ref{thm:3a}
 for $s_2=1$ and $-1$, respectively.
Here we have assumed that $|\hat{h}_{\mathrm{min}}|<|\hat{h}_{\mathrm{max}}|$.
The red and blue lines correspond
 to the bifurcation sets \eqref{eqn:thm3a1} and \eqref{eqn:thm3a2}, respectively.
Figure~\ref{fig:thm3a}(c) shows the behavior
 of the  stable and unstable manifolds of the periodic orbits
 near $\zeta=(\pm 1,0)$ for a Poincar\'e map
 whose orbit is given by sampling $(\zeta_1(t),\zeta_2(t))$ at every period,
 in regions \textcircled{\scriptsize 1}-\textcircled{\scriptsize 5} of both figures.
In particular, in region \textcircled{\scriptsize 3}, a heteroclinic cycle is constructed
 and consequently chaotic motions occur, as stated in Remark~\ref{rmk:3a}.
When $|\hat{h}_{\mathrm{min}}|>|\hat{h}_{\mathrm{max}}|$,
 the loci of the red and blue lines are exchanged in Figs.~\ref{fig:thm3a}(a) and (b)
 and when $|\hat{h}_{\mathrm{min}}|=|\hat{h}_{\mathrm{max}}|$,
 regions \textcircled{\scriptsize 2} and \textcircled{\scriptsize 4}
 disappear or their widths becomes $o(\sqrt{-\nu_1})$.

\subsection{Subharmonic orbits}
We next apply the subharmonic Melnikov method \cite{Y96}.
Since Eq.~\eqref{eqn:psys10} is a single-degree-of-freedom Hamiltonian and integrable,
 there exists a symplectic transformation from $\zeta$
 to the \emph{action-angle coordinates} $(I,\phi)$,
 e.g., as described in Chapter~10 of \cite{A89}.
Let $\Omega^k=2\pi/T^k$ and $I^k$, respectively,
 represent the angular frequency and action of  $\zeta^k(t)$, where
\begin{equation}
I^k=\frac{1}{2\pi}\int_0^{T^k}\zeta_2^k(t)^2\d t.
\label{eqn:Ik}
\end{equation}
We first compute the derivative $\d\Omega^k/\d I^k$,
 which is required in application of the theory.
Since the symplectic transformation is explicitly obtained in general,
 the reader may think that it is difficult,
 but it is not, as we see below.  

The Hamiltonian energy for $\zeta^k(t)$ is given by
\begin{align*}
H^k=\frac{k^2}{(k^2+1)^2},
\end{align*}
so that
\begin{align*}
\frac{\d H^k}{\d k}=\frac{2k(1-k^2)}{(k^2+1)^3}>0.
\end{align*}
Since
\[
\frac{\d H^k}{\d I^k}=\Omega^k=\frac{\pi}{2K(k)\sqrt{k^2+1}}>0
\]
and
\[
\frac{\d}{\d k}[(1-k^2)K(k)-(1+k^2)E(k)]=-3kE(k)<0,
\]
we have
\[
\frac{\d I^k}{\d k}=\frac{\d H^k/\d k}{\d H^k/\d I^k}>0
\]
and
\[
\frac{\d\Omega^k}{\d k}
 =\frac{\pi}{2k(1-k^2)(1+k^2)^{3/2}K(k)^2}[(1-k^2)K(k)-(1+k^2)E(k)]<0,
\]
where $E(k)$ is the complete elliptic integral of the second kind,
\[
E(k)=\int_0^{\pi/2}\sqrt{1-k^2\sin^2\psi}\,\d\psi.
\]
Here we have also used $K(0)=E(0)=\pi/2$ and
\[
\frac{\d K}{\d k}(k)=\frac{1}{k(1-k^2)}(E(k)-(1-k^2)K(k)),\quad
\frac{\d E}{\d k}(k)=\frac{1}{k}(E(k)-K(k)).
\]
Thus, we obtain
\begin{align*}
\frac{\d\Omega^k}{\d I^k}
 =\frac{\d\Omega^k/\d k}{\d I^k/\d k}<0.
\end{align*}

Let $nT^k=m\hat{T}$ for $m,n>0$ relatively prime integers,
 where $\hat{T}=|c/d|\hat{\epsilon}\,T$.
We compute the subharmonic Melnikov functions as
\begin{align*}
M^{m/n}(\phi;\hat{\nu})
=&\int_0^{m\hat{T}}\zeta_2^k(t)
 \left[\hat{\nu}\zeta_2^k(t)+s_2\zeta_1^k(t)^2\zeta_2^k(t)
 +\Delta h(\hat{\omega}t+\phi)\right]\d t\notag\\
=& \hat{\nu}J_1(k,n)+s_2J_2(k,n)+\Delta\hat{h}^{m/n}(\phi)
\end{align*}
and
\begin{align*}
L^{m/n}(\phi;\hat{\nu})
=& \int_0^{m\hat{T}}(\hat{\nu}+s_2\zeta_1^k(t)^2)\d t
=m\hat{\nu}\hat{T}+s_2 J_3(k,n),
\end{align*}
where
\begin{align*}
J_1(k,n)=&\int_0^{m\hat{T}}\zeta_2^k(t)^2\d t
=\frac{2k^2}{(k^2+1)^2}\int_0^{m\hat{T}}
\cn^2\left(\frac{t}{\sqrt{k^2+1}}\right)
\dn^2\left(\frac{t}{\sqrt{k^2+1}}\right)\d t\\
=&\frac{8n}{3(k^2+1)^{3/2}}\left[(k^2+1)E(k)-k'^2K(k)\right]>0,\\
J_2(k,n)=&\int_0^{m\hat{T}}\zeta_1^k(t)^2\zeta_2^k(t)^2\d t\\
=&\frac{4k^4}{(k^2+1)^3}\int_0^{m\hat{T}}
\sn^2\left(\frac{t}{\sqrt{k^2+1}}\right)
\cn^2\left(\frac{t}{\sqrt{k^2+1}}\right)
\dn^2\left(\frac{t}{\sqrt{k^2+1}}\right)\d t\\
=&\frac{16n}{15(k^2+1)^{5/2}}\left[2(k^4-k^2+1)E(k)-k'(2-k^2)K(k)\right]>0,\\
J_3(k,n)=&\int_0^{m\hat{T}}\zeta_1^k(t)^2\d t
=\frac{2k^2}{k^2+1}\int_0^{m\hat{T}}
\sn^2\left(\frac{t}{\sqrt{k^2+1}}\right)\d t\\
=& \frac{8n}{\sqrt{k^2+1}}\left[K(k)-E(k)\right]>0
\end{align*}
and
\begin{equation}
\hat{h}^{m/n}(\phi)=\int_0^{m\hat{T}}\zeta_2^k(t)h(\hat{\omega}t+\phi)\d t,
\label{eqn:hathmn}
\end{equation}
where $k'=\sqrt{1-k^2}$ is complementary elliptic modulus.

\begin{lem}
\label{lem:3b}
If $\hat{h}^{m/n}(\phi)\not\equiv 0$, then
 \begin{equation}
\hat{h}_{\mathrm{max}}^{m/n}=\max_{\phi\in\Sset^1}\hat{h}^{m/n}(\phi)>0
\quad\mbox{and}\quad
\hat{h}_{\mathrm{min}}^{m/n}=\min_{\phi\in\Sset^1}\hat{h}^{m/n}(\phi)<0.
\label{eqn:lem3b}
\end{equation}
\end{lem}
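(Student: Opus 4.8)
The plan is to mimic the proof of Lemma~\ref{lem:3a}, but the task is lighter here because the hypothesis $\hat h^{m/n}(\phi)\not\equiv 0$ is assumed outright rather than deduced from $h\not\equiv 0$; consequently the Fourier-transform computation that occupied the first half of the proof of Lemma~\ref{lem:3a} is unnecessary. What remains is to show that $\hat h^{m/n}$ has zero mean over $\phi\in\Sset^1$: since a continuous function on the circle with vanishing mean that is not identically zero must take both positive and negative values, \eqref{eqn:lem3b} follows at once.

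First I would record that $\hat h^{m/n}(\phi)$ is continuous in $\phi$ (in fact $C^2$, inheriting the regularity of $h$), because in \eqref{eqn:hathmn} the integration runs over the fixed compact interval $[0,m\hat T]$ and the integrand $\zeta_2^k(t)h(\hat\omega t+\phi)$ is jointly continuous in $(t,\phi)$. In particular $\hat h^{m/n}$ attains its maximum and minimum on $\Sset^1$, so $\hat h_{\mathrm{max}}^{m/n}$ and $\hat h_{\mathrm{min}}^{m/n}$ are well defined.

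Next I would compute the mean. Since $\zeta_2^k(t)h(\hat\omega t+\phi)$ is continuous on the compact rectangle $[0,m\hat T]\times[0,2\pi]$, Fubini's theorem permits interchanging the order of integration:
\[
\int_0^{2\pi}\hat h^{m/n}(\phi)\,\d\phi
=\int_0^{m\hat T}\zeta_2^k(t)\left[\int_0^{2\pi}h(\hat\omega t+\phi)\,\d\phi\right]\d t .
\]
For each fixed $t$ the inner integral equals $\int_0^{2\pi}h(u)\,\d u=0$ by the $2\pi$-periodicity of $h$ together with \eqref{eqn:prop2a}; hence the double integral vanishes, i.e.\ $\hat h^{m/n}$ has zero mean over $\Sset^1$. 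Finally, if $\hat h_{\mathrm{max}}^{m/n}\le 0$ then $\hat h^{m/n}\le 0$ everywhere and, being continuous with zero mean, $\hat h^{m/n}\equiv 0$, contradicting the hypothesis; likewise $\hat h_{\mathrm{min}}^{m/n}\ge 0$ forces $\hat h^{m/n}\equiv 0$. This establishes \eqref{eqn:lem3b}.

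There is no genuine obstacle here — every step is routine — but the one point deserving a line of care is the interchange of integrals, which is legitimate precisely because the orbit segment $\zeta^k$ and the forcing $h$ are continuous and the $t$-integration is over a bounded interval; this is in contrast with Lemma~\ref{lem:3a}, where the $t$-integration is over all of $\Rset$ and one must additionally invoke the uniform (exponential) decay of $\zeta_{2+}^{\h}$ to justify the swap and the differentiability.
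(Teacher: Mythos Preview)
Your proposal is correct and follows essentially the same approach as the paper: compute that $\hat h^{m/n}$ has zero mean over $\Sset^1$ by swapping the order of integration and invoking \eqref{eqn:prop2a}, then conclude that a nontrivial continuous function with zero mean must achieve both signs. The paper's own proof is more terse (it omits the continuity and Fubini remarks you spell out), but the argument is the same.
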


\begin{proof}
We easily see that
\begin{align*}
\int_0^{2\pi}\hat{h}^{m/n}(\phi)\d \phi
=&\int_0^{2\pi}\left[\int_0^{m\hat{T}}
 \zeta_2^k(t)h(\hat{\omega}t+\phi)\d t \right]\d \phi\\
=&\int_0^{m\hat{T}}\zeta_2^k(t)
 \left[ \int_{0}^{2\pi} h(\hat{\omega}t+\phi)\d \phi \right]\d t=0.
\end{align*}
Thus, if $\hat{h}^{m/n}(\phi)\not\equiv 0$, then Eq.~\eqref{eqn:lem3b} holds.
\end{proof}

\begin{thm}
\label{thm:3b}
Suppose that $\hat{h}^{m/n}(\phi)\not\equiv 0$
 and take $\nu_1$ or $\nu_2$ as a control parameter.
Then saddle-node bifurcations of $m\hat{T}$-periodic $($resp. $mT$-periodic$)$ orbits occur near
\begin{equation}
\nu_2=\frac{\Delta\hat{h}_{\mathrm{max}}^{m/n}+s_2J_2(k,n)}{J_1(k,n)}\nu_1
\quad\mbox{and}\quad
\nu_2=\frac{\Delta\hat{h}_{\mathrm{min}}^{m/n}+s_2J_2(k,n)}{J_1(k,n)}\nu_1,\quad
\nu_1<0
\label{eqn:thm3b}
\end{equation}
in \eqref{eqn:psys1} $($resp. in \eqref{eqn:sys}$)$.
\end{thm}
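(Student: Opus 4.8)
The plan is to invoke the subharmonic Melnikov method of \cite{Y96} in the form that detects saddle-node (fold) bifurcations of subharmonic periodic orbits through double zeros of the subharmonic Melnikov function. Recall from \cite{Y96} that for $\hat\epsilon>0$ sufficiently small each simple zero $\phi_0\in\Sset^1$ of $M^{m/n}(\,\cdot\,;\hat\nu)$ with $\partial_\phi M^{m/n}(\phi_0;\hat\nu)\neq 0$ corresponds to an $m\hat T$-periodic orbit of \eqref{eqn:psys1} lying $O(\hat\epsilon)$-close to a member $\zeta^k(t)$ of the resonant family, its type (saddle, node) being read off from $\partial_\phi M^{m/n}(\phi_0;\hat\nu)$ and $L^{m/n}(\phi_0;\hat\nu)$; and that two such orbits collide and disappear in a saddle-node bifurcation as a control parameter is varied through a value at which $M^{m/n}(\,\cdot\,;\hat\nu)$ acquires a double zero $\phi_0$, provided the tangency is quadratic, i.e.\ $\partial_\phi^2 M^{m/n}(\phi_0;\hat\nu)\neq 0$, the colliding orbits are hyperbolic there, i.e.\ $L^{m/n}(\phi_0;\hat\nu)\neq 0$, and the control parameter enters transversally, i.e.\ the derivative of $M^{m/n}(\phi_0;\,\cdot\,)$ in that parameter is nonzero. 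The proof thus reduces to locating the double zeros of the explicitly computed $M^{m/n}$ and checking these nondegeneracy conditions.

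First I would write
\[
M^{m/n}(\phi;\hat\nu)=A(\hat\nu)+\Delta\,\hat h^{m/n}(\phi),\qquad
A(\hat\nu):=\hat\nu\,J_1(k,n)+s_2 J_2(k,n),
\]
so that the $\hat\nu$-dependence amounts to a vertical shift of the fixed graph of $\Delta\,\hat h^{m/n}$ by the affine quantity $A(\hat\nu)$, of slope $J_1(k,n)>0$. A double zero at $\phi_0$ is exactly $\Delta\,\hat h^{m/n}(\phi_0)=-A(\hat\nu)$ together with $(\hat h^{m/n})'(\phi_0)=0$. Since $\Delta=\epsilon/\hat\epsilon^4>0$ and, by Lemma~\ref{lem:3b}, $\hat h_{\mathrm{max}}^{m/n}>0>\hat h_{\mathrm{min}}^{m/n}$ whenever $\hat h^{m/n}\not\equiv 0$, the two outermost critical levels are $A(\hat\nu)=-\Delta\,\hat h_{\mathrm{max}}^{m/n}$, attained at a global maximizer of $\hat h^{m/n}$, and $A(\hat\nu)=-\Delta\,\hat h_{\mathrm{min}}^{m/n}$, attained at a global minimizer; these are distinct. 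Solving $A(\hat\nu)=-\Delta\,\hat h_{\mathrm{max}}^{m/n}$ for $\hat\nu$, substituting $\hat\nu=-\nu_2/\nu_1$, and multiplying through by $-\nu_1>0$ yields
\[
\nu_2=\frac{\Delta\,\hat h_{\mathrm{max}}^{m/n}+s_2 J_2(k,n)}{J_1(k,n)}\,\nu_1,
\]
and likewise with $\hat h_{\mathrm{max}}^{m/n}$ replaced by $\hat h_{\mathrm{min}}^{m/n}$; these are precisely the two curves in \eqref{eqn:thm3b}.

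It then remains to confirm the nondegeneracy hypotheses along these curves. With $\nu_2$ as the control parameter and $\nu_1$ frozen one computes directly $\partial_{\nu_2}M^{m/n}=J_1(k,n)/|\nu_1|>0$, and a similar, if more involved, computation handles $\nu_1$ as the control parameter; in either case transversality holds generically. For a $C^2$ forcing the global extremum of $\hat h^{m/n}$ is generically a nondegenerate critical point, so $\partial_\phi^2 M^{m/n}(\phi_0;\hat\nu)=\Delta\,(\hat h^{m/n})''(\phi_0)\neq 0$ there, while $L^{m/n}(\phi_0;\hat\nu)=m\hat\nu\hat T+s_2 J_3(k,n)$, which is independent of $\phi$, is generically nonzero at the bifurcation values, so the colliding orbits are a hyperbolic saddle and a hyperbolic node. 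The saddle-node criterion of \cite{Y96} then produces a saddle-node bifurcation of $m\hat T$-periodic orbits of \eqref{eqn:psys1} near each of the two curves; undoing the successive rescalings of Section~2 and \eqref{eqn:hat} — under which the forcing period $\hat T=2\pi/\hat\omega$ of \eqref{eqn:psys1} corresponds to the forcing period $T=2\pi/\omega$ of \eqref{eqn:sys} — turns these into saddle-node bifurcations of $mT$-periodic orbits of \eqref{eqn:sys}, the word ``near'' throughout absorbing the $O(\hat\epsilon)$ error intrinsic to the Melnikov approximation.

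The main obstacle is exactly this last verification: in the stated generality — arbitrary $C^2$ forcing $h$ and arbitrary resonance $m/n$ — one cannot exclude degeneracies such as a flat global extremum of $\hat h^{m/n}$, or two critical points sharing the extreme value, which would break the quadratic character of the tangency or the clean count of two coalescing orbits. This is precisely why the theorem is phrased with ``near'' and why additional saddle-node curves, associated with the non-global critical values of $\hat h^{m/n}$, may coexist in between; the two curves in \eqref{eqn:thm3b} are singled out as the outermost pair, bounding the parameter wedge in which $m\hat T$-periodic orbits of this resonant family are present. A secondary, purely bookkeeping, point is keeping the chain of time rescalings connecting \eqref{eqn:sys}, \eqref{eqn:pnf} and \eqref{eqn:psys1} straight, so that the period of the detected orbits is correctly reported as $mT$ in the original system.
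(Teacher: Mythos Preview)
Your proposal is correct and follows essentially the same route as the paper: write $M^{m/n}(\phi;\hat\nu)=\hat\nu J_1(k,n)+s_2J_2(k,n)+\Delta\hat h^{m/n}(\phi)$, locate the double zeros at the global extrema of $\hat h^{m/n}$, and verify the hypotheses of Theorem~4.1 of \cite{Y96}, namely $M^{m/n}=0$, $\partial_\phi M^{m/n}=0$, $\partial_\phi^2 M^{m/n}\neq 0$, and $\partial_{\hat\nu} M^{m/n}=J_1(k,n)\neq 0$.

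Two small points of divergence. First, the condition $L^{m/n}(\phi_0;\hat\nu)\neq 0$ that you list among the nondegeneracy hypotheses is not part of the saddle-node criterion in \cite{Y96}; the paper invokes only the four conditions above, and $L^{m/n}$ enters only afterwards (Remark~\ref{rmk:3b}) to decide whether the non-saddle orbit is a sink or a source. Second, for the possible degeneracy $(\hat h^{m/n})''(\phi_0)=0$ at the extremum, the paper does not appeal to genericity as you do but instead asserts that a slight modification of Theorem~4.1 of \cite{Y96} still yields the saddle-node conclusion; neither treatment is fully spelled out, but the paper's phrasing commits to the result holding even in the degenerate case rather than only generically.
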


\begin{proof}
Suppose that
 the first (resp. second) condition in \eqref{eqn:thm3b} holds, i.e.,
\[
\hat{\nu}J_1(k,n)+s_2J_2(k,n)
=-\Delta\hat{h}_{\mathrm{max}}^{m/n}
\quad(\mbox{resp. }-\Delta\hat{h}_{\mathrm{min}}^{m/n}).
\]
We assume that $\hat{h}^{m/n}(\phi)$ attains the maximum (resp. minimum) at $\phi=\phi_0$,
 so that
\[
\frac{\d\hat{h}^{m/n}}{\d\phi}(\phi_0)=0.
\]
If in addition
\begin{equation}
\frac{\d^2\hat{h}^{m/n}}{\d\phi^2}(\phi_0)<0\quad\mbox{(resp. $>0$)},
\label{eqn:thm3bp}
\end{equation}
then the subharmonic Melnikov function $M^{m/n}(\phi;\hat{\nu})$ satisfies
\begin{itemize}
\setlength{\leftskip}{-1.2em}
\item[(i)]
$M^{m/n}(\phi_0;\hat{\nu})=0$;
\item[(ii)]
$\displaystyle\frac{\partial M^{m/n}}{\partial\phi}(\phi_0;\hat{\nu})=0$;
\item[(iii)]
$\displaystyle\frac{\partial^2M^{m/n}}{\partial\phi^2}(\phi_0;\hat{\nu})\neq 0$;
\item[(iv)]
$\displaystyle\frac{\partial M^{m/n}}{\partial\hat{\nu}}(\phi_0;\hat{\nu})\neq 0$,
\end{itemize}
which yields the conclusion by Theorem~4.1 of \cite{Y96}.
If condition~\eqref{eqn:thm3bp} does not hold,
 then we slightly modify the theorem to obtain the desired result.
\end{proof}

\begin{figure}[t]
\begin{center}
\includegraphics[width=40mm]{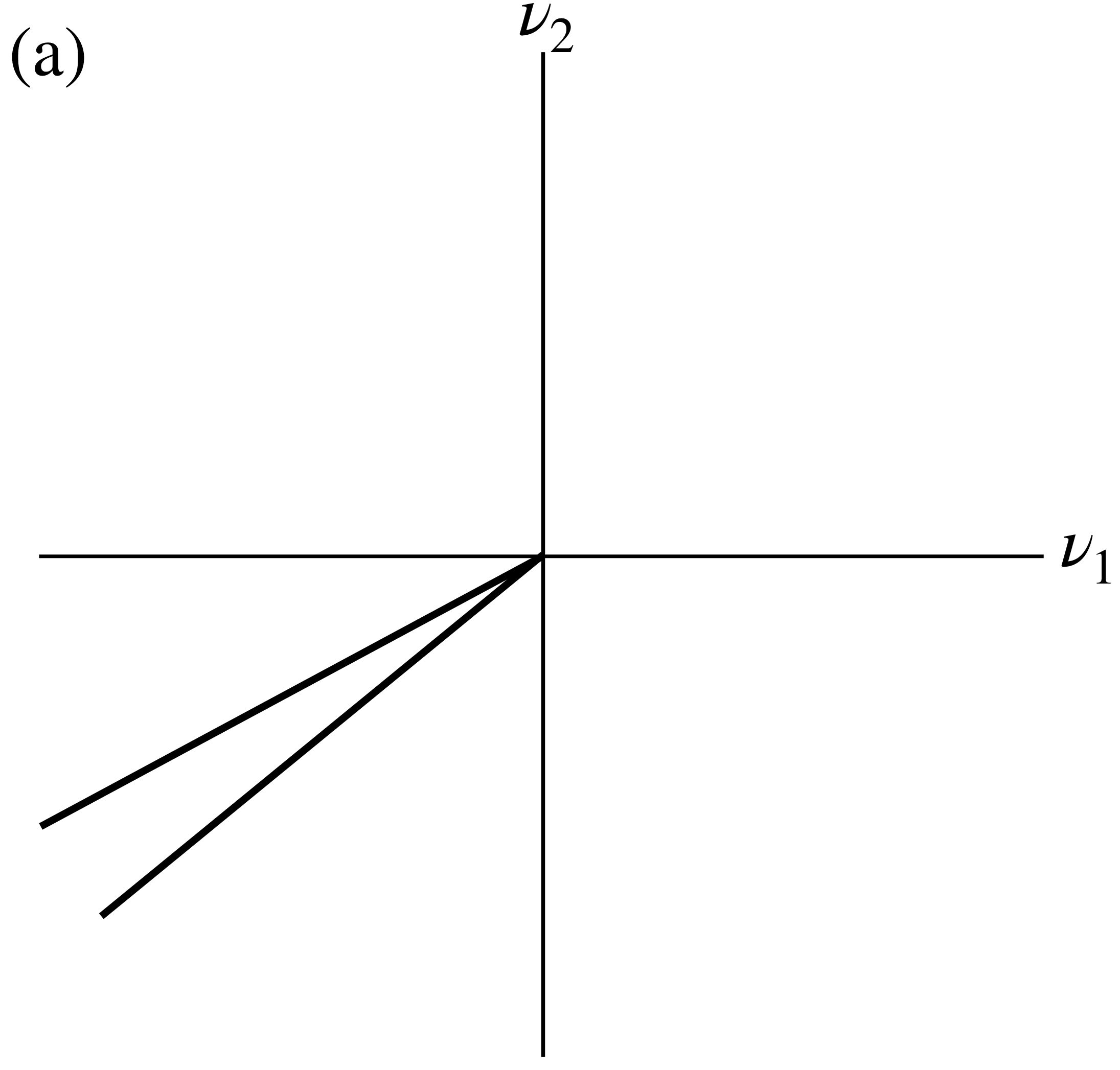}\quad
\includegraphics[width=40mm]{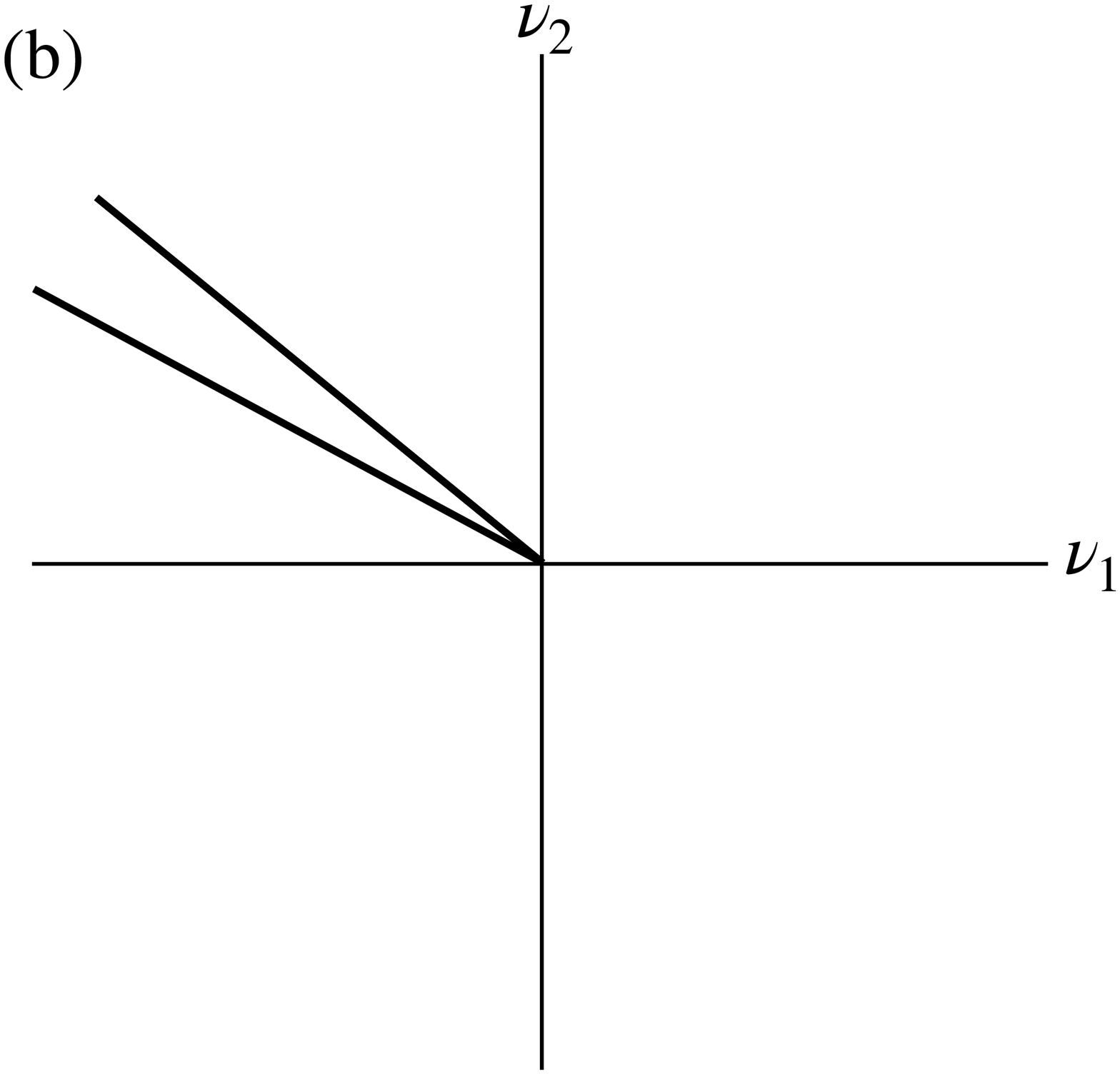}\\[2ex]
\includegraphics[width=50mm]{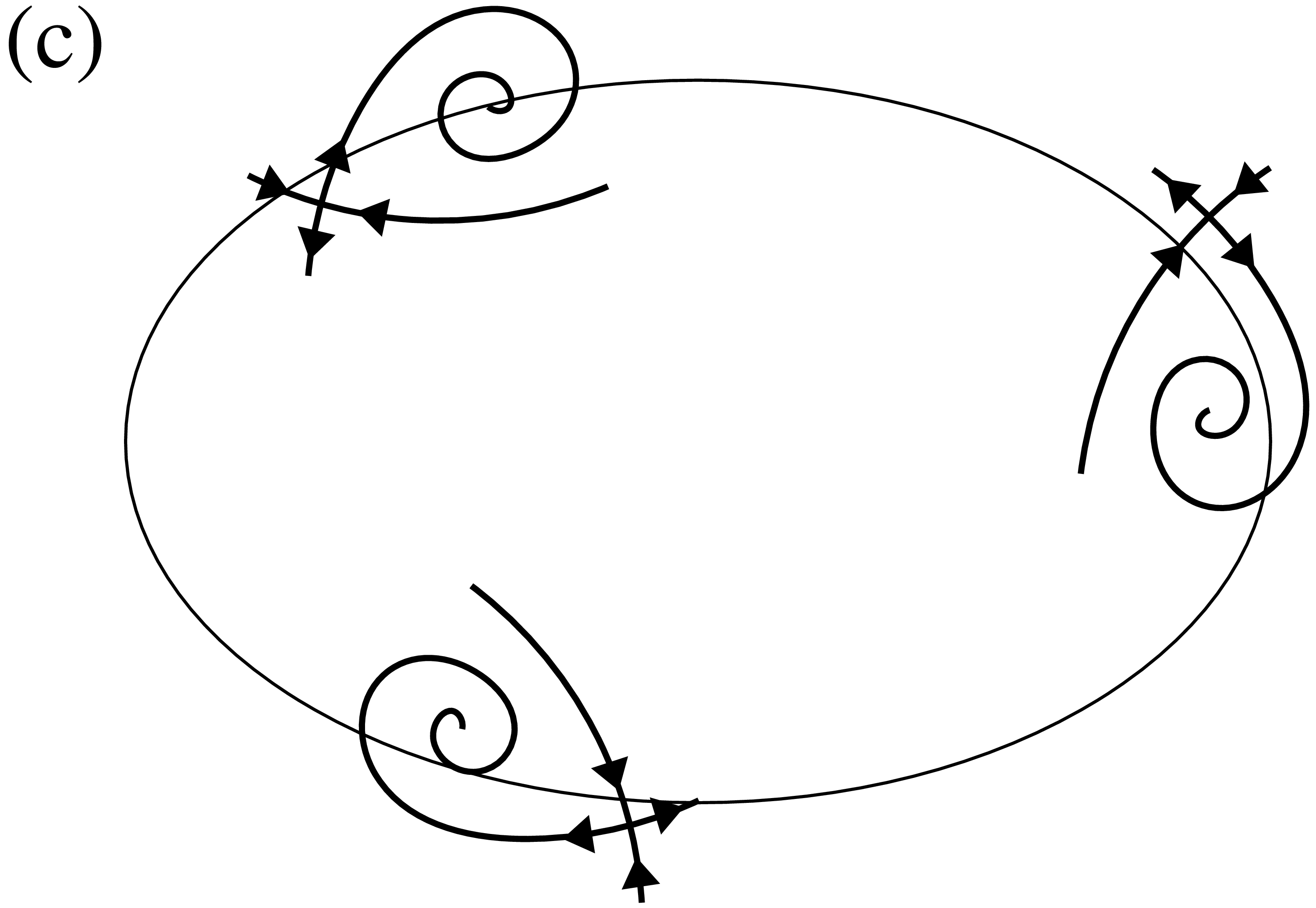}
\end{center}
\caption{
Saddle-node bifurcations in \eqref{eqn:psys1}:
(a) Approximate bifurcation sets for $s_2=1$;
(b) for $s_2=-1$;
(c) behavior of a Poincar\'e map near the unperturbed periodic orbit
 when periodic orbits of sink and saddle types appear,
 where $(m,n)=(3,1)$.}
\label{fig:thm3b}
\end{figure}

\begin{rmk}
\label{rmk:3b}
It follows from Theorem~$3.2$ of {\rm\cite{Y96}}
 that one of periodic orbits born at the saddle-node bifurcation
 detected in Theorem~$\ref{thm:3b}$
 is of a sink $($resp. a source$)$ type and  the other is of a saddle type if
\[
L^{m/n}(\phi;\hat{\nu})=m\hat{\nu}\hat{T}+s_2J_3(k,n)<0\quad(\mbox{resp. $>0$})
\]
i.e.,
\begin{equation}
\nu_2<\frac{s_2J_3(k,n)}{m\hat{T}}\nu_1\quad
\left(\mbox{resp. $\displaystyle\nu_2>\frac{s_2J_3(k,n)}{m\hat{T}}\nu_1$}\right).
\label{eqn:rmk3b1}
\end{equation}
Hence, the former changes its stability from a source type to a sink one or vice versa near
\begin{equation}
L^{m/n}(\phi;\hat{\nu})=m\hat{\nu}\hat{T}+s_2J_3(k,n)=0,\quad\mbox{i.e.,}\quad
\nu_2=\frac{s_2J_3(k,n)}{m\hat{T}}\nu_1,\quad
\nu_1<0.
\label{eqn:rmk3b2}
\end{equation}
This suggests that a Hopf bifurcation may occur there.
\end{rmk}

Figures~\ref{fig:thm3b}(a) and (b),
 show the approximate saddle-node bifurcation sets detected by Theorem~\ref{thm:3b}
 for $s_2=1$ and $-1$, respectively.
Figure~\ref{fig:thm3b}(c) shows the behavior of a Poincar\'e map
 near the resonant periodic orbit $\zeta^k(t)$ with $nT^k=m\hat{T}$ for $(m,n)=(3,1)$
 when periodic orbits of sink and saddle types appear,
 in the region between the two bifurcation curves in both figures.

\section{Melnikov Analyses for $s_1=-1$}

We turn to the case of $s_1=-1$ in \eqref{eqn:pnf}.
Assume that $|\nu_1|,|\nu_2|\ll 1$, $\nu_2=O(\nu_1)$ and $\epsilon=O(\nu_1^2)$.
We discuss the two cases of $\nu_1<0$ and of $\nu_1>0$ separately.

\subsection{Case of $\nu_1<0$}\label{section4.1}

Let $\hat{\epsilon}$ be a small positive parameter such that $\nu_1=-\hat{\epsilon}^2$,
 and introduce the new state variables $\zeta=(\zeta_1,\zeta_2)$
 and the new parameter $\hat{\nu}=O(1)$ as in \eqref{eqn:sc1}.
Rescaling the time variable $t$ as $t\to\sqrt{-\nu_1}\,t$,
 we rewrite \eqref{eqn:pnf} as
\begin{align}
\dot{\zeta}_1=\zeta_2,\quad
\dot{\zeta}_2=-\zeta_1-\zeta_1^3
 +\hat{\epsilon}(\hat{\nu}\zeta_2+s_2\zeta_1^2\zeta_2+\Delta h(\hat{\omega}t)),
\label{eqn:psys2a}
\end{align}
where $\nu_1=-\hat{\epsilon}^2$,
 and $\hat{\omega}$ and $\Delta$ are given by \eqref{eqn:hat}.

When $\hat{\epsilon}=0$, the system~\eqref{eqn:psys2a} becomes
\begin{equation}
\dot{\zeta}_1=\zeta_2,\quad
\dot{\zeta}_2=-\zeta_1-\zeta_1^3,
\label{eqn:psys2a0}
\end{equation}
which is a planar Hamiltonian system with the Hamiltonian
\[
H(\zeta)=\frac{1}{2}\zeta_2^2+\frac{1}{2}\zeta_1^2+\frac{1}{4}\zeta_1^4.
\]
The unperturbed system \eqref{eqn:psys2a0} has a one-parameter family of periodic orbits,
\begin{align}
\zeta^k(t)=&\left(\frac{\sqrt{2}k}{\sqrt{1-2k^2}}\cn\left(\frac{t}{\sqrt{1-2k^2}}
\right)\right.,\notag\\
&\quad\left. -\frac{\sqrt{2}k}{1-2k^2}\sn\left(\frac{t}{\sqrt{1-2k^2}}\right)
\dn\left(\frac{t}{\sqrt{1-2k^2}}\right)\right),\quad k\in(0,1/\sqrt{2}),
\label{eqn:po2a}
\end{align}
where $k$ represents the elliptic modulus of the Jacobi's elliptic functions.
The period of $\zeta^k(t)$ is given by
\begin{align*}
T^k=4K(k)\sqrt{1-2k^2}.
\end{align*}
See Fig.~\ref{fig:4a} for the phase portrait of \eqref{eqn:psys2a0}.

\begin{figure}[t]
\begin{center}
\includegraphics[scale=0.6]{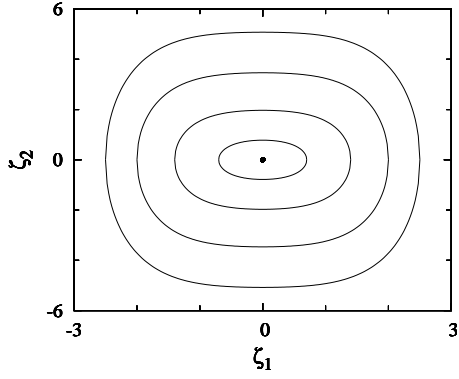}
\caption{Phase portraits of \eqref{eqn:psys2a0}.}
\label{fig:4a}
\end{center}
\end{figure}

We apply the subharmonic Melnikov method of \cite{Y96}.
Let $\Omega^k=2\pi/T^k$ and $I^k$, respectively,
 represent the angular frequency and action of  $\zeta^k(t)$ (see \eqref{eqn:Ik}).
The Hamiltonian energy for $\zeta^k(t)$ is given by
\begin{align*}
H^k=\frac{k^2(1-k^2)}{(1-2k^2)^2},
\end{align*}
so that
\begin{align*}
\frac{\d H^k}{\d k}=\frac{2k}{(1-2k^2)^3}>0.
\end{align*}
Since $\d H^k/\d I^k=\Omega^k>0$, we have
\begin{align*}
\frac{\d I^k}{\d k}=\frac{\d H^k/\d k}{\d H^k/\d I^k}>0.
\end{align*}
We also compute
\begin{align*}
\frac{\d\Omega^k}{\d k}=-\frac{\pi\left[(1-2k^2)(E(k)-K(k))-k^2K(k)\right]}
{2k(1-k^2)(1-2k^2)^{3/2}K(k)^2}>0
\end{align*}
to obtain
\begin{align*}
\frac{\d\Omega^k}{\d I^k}
 =\frac{\d\Omega^k/\d k}{\d I^k/\d k}>0.
\end{align*}

Let $nT^k=m\hat{T}$ for $m,n>0$ relatively prime integers.
Recall that $\hat{T}=|c/d|\hat{\epsilon}\,T$.
We compute the subharmonic Melnikov functions as
\begin{align*}
M^{m/n}(\phi;\hat{\nu})
=&\int_0^{m\hat{T}}\zeta_2^k(t)
 \left[\hat{\nu}\zeta_2^k(t)+s_2\zeta_1^k(t)^2\zeta_2^k(t)
 +\Delta h(\hat{\omega}t+\phi)\right]\d t\notag\\
=& \hat{\nu}J_1(k,n)+s_2J_2(k,n)+\Delta\hat{h}^{m/n}(\phi)
\end{align*}
and
\begin{align*}
L^{m/n}(\phi;\hat{\nu})
=& \int_0^{m\hat{T}}(\hat{\nu}+s_2\zeta_1^k(t)^2)\d t
=m\hat{\nu}\hat{T}+s_2 J_3(k,n),
\end{align*}
where
\begin{align*}
J_1(k,n)=&\int_0^{m\hat{T}}\zeta_2^k(t)^2\d t
=\frac{2k^2}{(1-2k^2)^2}\int_0^{m\hat{T}}\sn^2\left(\frac{t}{\sqrt{1-2k^2}}\right)
\dn^2\left(\frac{t}{\sqrt{1-2k^2}}\right)\d t\\
=&\frac{8n}{3(1-2k^2)^{3/2}}\left[(2k^2-1)E(k)+k'^2K(k)\right]>0,\\
J_2(k,n)=&\int_0^{m\hat{T}}\zeta_1^k(t)^2\zeta_2^k(t)^2\d t\\
=&\frac{4k^4}{(1-2k^2)^3}\int_0^{m\hat{T}}
\dn^2\left(\frac{t}{\sqrt{1-2k^2}}\right)
\sn^2\left(\frac{t}{\sqrt{1-2k^2}}\right)
\cn^2\left(\frac{t}{\sqrt{1-2k^2}}\right)\d t\\
=&\frac{16n}{15(1-2k^2)^{5/2}}\left[2(k^4-k^2+1)E(k)-k'^2(2-k^2)K(k)\right]>0,\\
J_3(k,n)=&\int_0^{m\hat{T}}\zeta_1^k(t)^2\d t
=\frac{2k^2}{1-2k^2}\int_0^{m\hat{T}}\cn^2\left(\frac{t}{\sqrt{1-2k^2}}\right)\d t\\
=&\frac{8n}{\sqrt{1-2k^2}}\left[E(k)-k'^2K(k)\right]>0
\end{align*}
and $\hat{h}^{m/n}(\phi)$ is given by \eqref{eqn:hathmn}
 with $\zeta_2^k(t)$  of \eqref{eqn:po2a},
 for which the statement of Lemma~\ref{lem:3b} also holds.
Recall that $k'=\sqrt{1-k^2}$ is complementary elliptic modulus.
We prove the following theorem like Theorem~\ref{thm:3b}.

\begin{thm}
\label{thm:4a}
Suppose that $\hat{h}^{m/n}(\phi)\not\equiv 0$
 and take $\nu_1$ or $\nu_2$ as a control parameter.
Then saddle-node bifurcations of $m\hat{T}$-periodic $($resp. $mT$-periodic$)$ orbits occur
 near \eqref{eqn:thm3b} in \eqref{eqn:psys2a} {\rm(}resp. in \eqref{eqn:sys}$)$.
\end{thm}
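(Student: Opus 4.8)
The plan is to mimic the proof of Theorem~\ref{thm:3b} almost verbatim. The subharmonic Melnikov function computed just above has exactly the same algebraic form, $M^{m/n}(\phi;\hat{\nu})=\hat{\nu}J_1(k,n)+s_2J_2(k,n)+\Delta\hat{h}^{m/n}(\phi)$ with $J_1(k,n)>0$, and $\hat{h}^{m/n}(\phi)$ given by \eqref{eqn:hathmn} with the $\cn$-type family $\zeta_2^k(t)$ of \eqref{eqn:po2a}, for which the analogue of Lemma~\ref{lem:3b} holds. The two ingredients needed before invoking Theorem~4.1 of \cite{Y96} are the twist condition $\d\Omega^k/\d I^k\neq0$, which has been verified above to be positive for every $k\in(0,1/\sqrt{2})$, and the existence of a point $\phi_0\in\Sset^1$ at which $\hat{h}^{m/n}$ attains an extremum with $\hat{h}^{m/n}(\phi_0)\neq0$, supplied by the analogue of Lemma~\ref{lem:3b}.

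Concretely, I would fix relatively prime $m,n>0$ and $k\in(0,1/\sqrt{2})$ with $nT^k=m\hat{T}$, assume that the first (resp. second) relation in \eqref{eqn:thm3b} holds, i.e. $\hat{\nu}J_1(k,n)+s_2J_2(k,n)=-\Delta\hat{h}^{m/n}_{\mathrm{max}}$ (resp. $-\Delta\hat{h}^{m/n}_{\mathrm{min}}$), and take $\phi_0$ to be a maximizer (resp. minimizer) of $\hat{h}^{m/n}$. Then conditions (i)--(iv) of Theorem~4.1 of \cite{Y96} follow exactly as in the proof of Theorem~\ref{thm:3b}: (i) $M^{m/n}(\phi_0;\hat{\nu})=0$ by the choice of $\hat{\nu}$; (ii) $\partial_\phi M^{m/n}(\phi_0;\hat{\nu})=\Delta(\d\hat{h}^{m/n}/\d\phi)(\phi_0)=0$ since $\phi_0$ is critical; (iii) $\partial_\phi^2 M^{m/n}(\phi_0;\hat{\nu})=\Delta(\d^2\hat{h}^{m/n}/\d\phi^2)(\phi_0)\neq0$ in the generic case of a nondegenerate extremum, and otherwise one modifies the statement of \cite{Y96} just as before; (iv) $\partial_{\hat{\nu}}M^{m/n}(\phi_0;\hat{\nu})=J_1(k,n)>0$. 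Theorem~4.1 of \cite{Y96} then yields a saddle-node bifurcation of $m\hat{T}$-periodic orbits of \eqref{eqn:psys2a} near the corresponding curve in \eqref{eqn:thm3b}; undoing the scalings \eqref{eqn:sc1}, the time rescaling $t\to\sqrt{-\nu_1}\,t$, and the transformations of Section~2 converts this into a saddle-node bifurcation of $mT$-periodic orbits of \eqref{eqn:sys}.

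I do not expect a genuine obstacle here: the argument is structurally a copy of the proof of Theorem~\ref{thm:3b}, and the only point requiring real care is that the twist condition $\d\Omega^k/\d I^k\neq0$ actually holds over the whole admissible range $k\in(0,1/\sqrt{2})$, which has already been established above. If I wanted to be thorough I would also record, in parallel with Remark~\ref{rmk:3b}, that the sign of $L^{m/n}(\phi;\hat{\nu})=m\hat{\nu}\hat{T}+s_2J_3(k,n)$ determines which of the two bifurcating orbits is of sink/source type and which is of saddle type, but this is not needed for the statement itself.
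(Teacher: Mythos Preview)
Your proposal is correct and matches the paper's own approach: the paper simply states that Theorem~\ref{thm:4a} is proved ``like Theorem~\ref{thm:3b},'' relying on the identical form of $M^{m/n}(\phi;\hat{\nu})$, the positivity of $J_1(k,n)$, the analogue of Lemma~\ref{lem:3b}, and the twist condition $\d\Omega^k/\d I^k>0$ already verified for $k\in(0,1/\sqrt{2})$. Your remark on $L^{m/n}$ is likewise exactly what the paper records separately as Remark~\ref{rmk:4a}.
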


\begin{rmk}
\label{rmk:4a}
As in Remark~$\ref{rmk:3b}$,
 one of periodic orbits born at the saddle-node bifurcation
 detected in Theorem~$\ref{thm:4a}$ is of a sink $($resp. a source$)$ type
 and the other is of a saddle type if condition~\eqref{eqn:rmk3b1} holds,
 and the former changes its stability from a source type to a sink one or vice versa
 near \eqref{eqn:rmk3b2}.
This suggests that a Hopf bifurcation may occur there.
\end{rmk}

Saddle-node bifurcation sets detected by Theorem~\ref{thm:4a}
 and the behavior of a Poincar\'e map near $\zeta^k(t)$ with $nT^k=m\hat{T}$
 for $(m,n)=(3,1)$
 are similar to those displayed in Fig.~\ref{fig:thm3b}.

\subsection{Case of $\nu_1>0$}\label{section4.2}
Let $\hat{\epsilon}$ be a small positive parameter such that $\nu_1=\hat{\epsilon}^2$,
 and introduce the new state variables $\zeta=(\zeta_1,\zeta_2)$
 and the new parameter $\hat{\nu}=O(1)$ as
\begin{align}\label{trans4.2}
y_1=\sqrt{\nu_1}\zeta_1,\quad
y_2=\nu_1\zeta_2,\quad
\hat{\nu}=\nu_2/\nu_1.
\end{align}
Rescaling the time variable $t$ as $t\to\sqrt{\nu_1}\,t$,
 we rewrite \eqref{eqn:pnf} as
\begin{align}
\dot{\zeta}_1=\zeta_2,\quad
\dot{\zeta}_2=\zeta_1-\zeta_1^3
 +\hat{\epsilon}(\hat{\nu}\zeta_2+s_2\zeta_1^2\zeta_2+\Delta h(\hat{\omega}t)),
\label{eqn:psys2b}
\end{align}
where $\hat{\omega}$ and $\Delta$ are given by \eqref{eqn:hat}.

\begin{figure}[t]
\begin{center}
\includegraphics[scale=0.6]{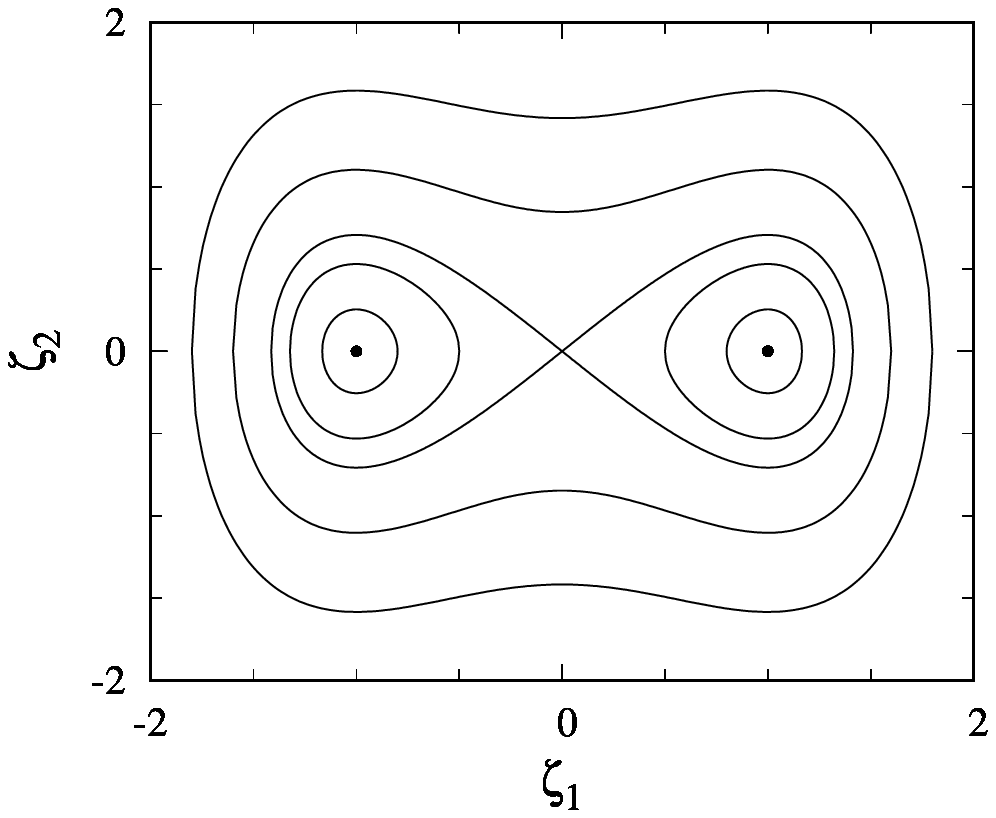}
\caption{Phase portraits of \eqref{eqn:psys2b0}.}
\label{fig:4b}
\end{center}
\end{figure}

When $\hat{\epsilon}=0$, the system~\eqref{eqn:psys2b} becomes
\begin{equation}
\dot{\zeta}_1=\zeta_2,\quad
\dot{\zeta}_2=\zeta_1-\zeta_1^3
\label{eqn:psys2b0}
\end{equation}
which is a planar Hamiltonian system with the Hamiltonian
\[
H(\zeta)=\frac{1}{2}\zeta_2^2-\frac{1}{2}\zeta_1^2+\frac{1}{4}\zeta_1^4.
\]
The unperturbed system \eqref{eqn:psys2b0} has a hyperbolic saddle at the origin
 to which there exists a pair of homoclinic orbits
\begin{align}
\zeta_{\pm}^{\h} (t)=
\left(\pm\sqrt{2}\sech t,\mp\sqrt{2}\sech t\tanh t\right)
\label{eqn:ho2}
\end{align}
Moreover, inside each homoclinic orbits,
 there exists two one-parameter families of periodic orbits,
\begin{align}
\zeta_{\pm}^k(t)=&\left(\pm\sqrt{\frac{2}{2-k^2}}\dn\left(\frac{t}{\sqrt{2-k^2}}
\right)\right.,\notag\\
&\quad\left. \mp\frac{\sqrt{2}k^2}{2-k^2}\sn\left(\frac{t}{\sqrt{2-k^2}}\right)
\cn\left(\frac{t}{\sqrt{2-k^2}}\right)\right),\quad k\in(0,1),
\label{eqn:po2b1}
\end{align}
and outside of the homoclinic orbits,
 there exists a one-parameter family of periodic orbits,
\begin{align}
\zeta^k(t)=&\left(\frac{\sqrt{2}k}{\sqrt{2k^2-1}}
\cn\left(\frac{t}{\sqrt{2k^2-1}}\right),\right.\notag\\
&\quad\left.-\frac{\sqrt{2}k}{{2k^2-1}}
\sn\left(\frac{t}{\sqrt{2k^2-1}}\right)
\dn\left(\frac{t}{\sqrt{2k^2-1}}\right)
\right),\quad k\in(1/\sqrt{2},1),
\label{eqn:po2b2}
\end{align}
where $k$ represents the elliptic modulus of the Jacobi's elliptic functions.
The periods of $\zeta_{\pm}^k(t)$ and $\zeta^k(t)$ are, respectively, given by
\begin{align}
T_\pm^k=2K(k)\sqrt{2-k^2}
\quad\mbox{and}\quad
T^k=4K(k)\sqrt{2k^2-1}.
\end{align}
See Fig.~\ref{fig:4b} for the phase portrait of \eqref{eqn:psys2b0}.

\subsubsection{Homoclinic orbits}
We apply the homoclinic Melnikov method \cite{GH83,W03}.
For $\hat{\epsilon}>0$ sufficiently small
 we easily see that the system~\eqref{eqn:psys2b} has a hyperbolic periodic orbit near the origin.
We compute the Melnikov functions for $\zeta_\pm^{\h}(t)$ as
\begin{align*}
M_\pm(\phi;\hat{\nu})
=&\int_{-\infty}^\infty\zeta_{2\pm}^{\h}(t)
 \left[\hat{\nu}(\zeta_{2\pm}^{\h}(t))
 +s_2\zeta_{1\pm}^{\h}(t)^2\zeta_{2\pm}^\h(t)
 +\Delta h(\hat{\omega}t+\phi)\right]\d t\notag\\
=&\frac{4}{3}\hat{\nu}+\frac{16}{15}s_2\pm\Delta\hat{h}(\phi).
\end{align*}
where $\hat{h}(\phi)$ is given by \eqref{eqn:hath}
 with $\zeta_{2+}^\h(t)$ of \eqref{eqn:ho2},
 for which the statement of Lemma~\ref{lem:3a} also holds.
In particular, corresponding to \eqref{eqn:lem3ap}, we have
\[
\tilde{\zeta}_{2+}(\chi)
 =\int_{-\infty}^\infty\zeta_{2+}^{\h}(t)e^{i\chi t}\d t
 =i\pi\chi\sech\left(\frac{\pi\chi}{2}\right)\neq 0
\]
for any $\chi\in\Rset$.

Assume that $h(\phi)\not\equiv 0$.
Then the conclusion of Lemma~\ref{lem:3a} holds.
If
\[
-\hat{h}_{\mathrm{max}}
<\frac{4}{3}\hat{\nu}+\frac{16}{15}s_2
<-\hat{h}_{\mathrm{min}}
\]
and
\[
\hat{h}_{\mathrm{min}}
<\frac{4}{3}\hat{\nu}+\frac{16}{15}s_2
<\hat{h}_{\mathrm{max}},
\]
i.e.,
\begin{equation}
-\left(\frac{3}{4}\Delta\hat{h}_{\mathrm{max}}+\frac{4}{5}s_2\right)\nu_1
<\nu_2<-\left(\frac{3}{4}\Delta\hat{h}_{\mathrm{min}}+\frac{4}{5}s_2\right)\nu_1
\label{eqn:hcon2a}
\end{equation}
and
\begin{equation}
\left(\frac{3}{4}\Delta\hat{h}_{\mathrm{min}}-\frac{4}{5}s_2\right)\nu_1
<\nu_2<\left(\frac{3}{4}\Delta\hat{h}_{\mathrm{max}}-\frac{4}{5}s_2\right)\nu_1,
\label{eqn:hcon2b}
\end{equation}
respectively, then the Melnikov functions $M_\pm(\phi)$ cross the zero topologically.
Using arguments given in \cite{GH83,W03}.
 we obtain the following result.

\begin{thm}
\label{thm:4b}
Suppose that $h(\phi)\not\equiv 0$.
If condition~\eqref{eqn:hcon2a} $($resp. \eqref{eqn:hcon2b}$)$ holds,
 then for $|\nu_1|,|\nu_2|,\epsilon>0$ sufficiently small
 the right $($resp. left$)$ branches of the stable and unstable manifolds
 of a periodic orbit near the origin intersect topologically transversely
 in \eqref{eqn:psys2b} and hence in \eqref{eqn:sys}.
Moreover, homoclinic bifurcations occur near the four curves
\begin{align}
&
\nu_2=-\left(\frac{3}{4}\Delta\hat{h}_{\mathrm{max}}+\frac{4}{5}s_2\right)\nu_1,\quad
\nu_2=-\left(\frac{3}{4}\Delta\hat{h}_{\mathrm{min}}+\frac{4}{5}s_2\right)\nu_1,\quad
\nu_1>0.
\label{eqn:thm4b1}\\
&
\nu_2=\left(\frac{3}{4}\Delta\hat{h}_{\mathrm{min}}-\frac{4}{5}s_2\right)\nu_1,\quad
\nu_2=\left(\frac{3}{4}\Delta\hat{h}_{\mathrm{max}}-\frac{4}{5}s_2\right)\nu_1,\quad
\nu_1>0.
\label{eqn:thm4b2}
\end{align}
\end{thm}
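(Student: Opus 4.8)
The plan is to run the homoclinic Melnikov method for a hyperbolic periodic orbit, exactly as in the proof of Theorem~\ref{thm:3a}, but now applied to the two homoclinic loops $\zeta_\pm^{\h}(t)$ of \eqref{eqn:psys2b0}. First I would use the persistence statement already recorded above: for $\hat\epsilon>0$ sufficiently small the system \eqref{eqn:psys2b} has a hyperbolic periodic orbit $\gamma_{\hat\epsilon}$ near the origin, whose local stable and unstable manifolds depend $C^2$-smoothly on $\hat\epsilon$ and on $\hat\nu,\Delta,\hat\omega$ (Chapter~4 of \cite{GH83}, \cite{W03}). Taking a Poincar\'e section transverse to $\zeta_+^{\h}$ (resp. $\zeta_-^{\h}$) and parametrising the right (resp. left) branches of $W^{s}(\gamma_{\hat\epsilon})$ and $W^{u}(\gamma_{\hat\epsilon})$ by the phase $\phi=\hat\omega t_0$, the signed distance between these branches at phase $\phi$ equals $\hat\epsilon\,M_\pm(\phi;\hat\nu)/\|\dot\zeta_{+}^{\h}(0)\|+O(\hat\epsilon^{2})$, where $M_\pm$ are the Melnikov functions already evaluated in the text. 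The integral defining $\hat h(\phi)$ converges uniformly in $\phi$ (as in the proof of Lemma~\ref{lem:3a}, now using $\tilde\zeta_{2+}(\chi)=i\pi\chi\sech(\pi\chi/2)\ne0$), so $\hat h$ is a well-defined $C^2$ function and the conclusion of Lemma~\ref{lem:3a} applies, giving $\hat h_{\mathrm{min}}<0<\hat h_{\mathrm{max}}$.

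Next I would read off the zeros of $M_\pm$. Since $\Delta>0$, the range of $\Delta\hat h(\phi)$ is the interval $[\Delta\hat h_{\mathrm{min}},\Delta\hat h_{\mathrm{max}}]$ straddling $0$. If $\tfrac43\hat\nu+\tfrac{16}{15}s_2$ lies strictly inside $(-\Delta\hat h_{\mathrm{max}},-\Delta\hat h_{\mathrm{min}})$ — which, after substituting $\hat\nu=\nu_2/\nu_1$ and $\Delta=\epsilon/\nu_1^{2}$ from \eqref{trans4.2} and \eqref{eqn:hat} with $\nu_1>0$, is precisely condition~\eqref{eqn:hcon2a} — then $M_+(\cdot;\hat\nu)$ changes sign on $\Sset^1$; the corresponding sign change of the signed-distance function forces the right branches of $W^{s}(\gamma_{\hat\epsilon})$ and $W^{u}(\gamma_{\hat\epsilon})$ to cross, i.e.\ to intersect topologically transversely, by the arguments of \cite{GH83,W03}. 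The identical argument applied to $M_-(\phi;\hat\nu)=\tfrac43\hat\nu+\tfrac{16}{15}s_2-\Delta\hat h(\phi)$ under condition~\eqref{eqn:hcon2b} yields the left-branch statement. Finally, the four boundary values $\tfrac43\hat\nu+\tfrac{16}{15}s_2=-\Delta\hat h_{\mathrm{max}}$, $=-\Delta\hat h_{\mathrm{min}}$ (for $M_+$) and $=\Delta\hat h_{\mathrm{min}}$, $=\Delta\hat h_{\mathrm{max}}$ (for $M_-$) are exactly where $M_\pm$ acquires a degenerate zero at an extremal phase; rewriting each equation in $(\nu_1,\nu_2)$ coordinates gives precisely the curves \eqref{eqn:thm4b1}--\eqref{eqn:thm4b2}, near which the two branches have a (quadratic) homoclinic tangency, that is, a homoclinic bifurcation.

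To pass from \eqref{eqn:psys2b} (equivalently, from the truncated normal form \eqref{eqn:pnf}) to the full system \eqref{eqn:sys}, I would note that the changes of variables of Section~2 are near-identity diffeomorphisms depending smoothly on $\mu$ and $\epsilon$, so a topologically transverse homoclinic intersection persists under these diffeomorphisms and under restoration of the higher-order terms, provided $|\nu_1|,|\nu_2|,\epsilon$ are small enough.

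The main obstacle is not any particular computation — the Melnikov integrals are already in hand — but formulating the splitting estimate rigorously for a \emph{periodic} orbit (rather than a saddle equilibrium) while $g$, hence $h$ and $\hat h$, is only $C^2$; this finite smoothness is exactly why one concludes only \emph{topological} transversality (which, by \cite{BW95}, still produces chaotic dynamics), and it forces one to invoke the finite-differentiability version of the homoclinic Melnikov theory of \cite{GH83,W03}, paralleling the subharmonic version of \cite{Y96} used elsewhere in the paper. The remaining steps are routine bookkeeping with the scalings \eqref{trans4.2} and \eqref{eqn:hat}.
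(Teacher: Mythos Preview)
Your proposal is correct and follows essentially the same approach as the paper, which simply computes the Melnikov functions $M_\pm(\phi;\hat\nu)=\tfrac{4}{3}\hat\nu+\tfrac{16}{15}s_2\pm\Delta\hat h(\phi)$, notes that Lemma~\ref{lem:3a} applies, and then invokes the standard arguments of \cite{GH83,W03} without further detail. Your write-up in fact supplies more of the analytic scaffolding (persistence of the hyperbolic periodic orbit, the signed-distance formula, the $C^2$ issue behind ``topological'' transversality, and the passage back to \eqref{eqn:sys}) than the paper spells out.
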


Such transverse intersections also yield chaotic dynamics
 in \eqref{eqn:psys2b} and \eqref{eqn:sys} like geometrically transverse intersections,
 as stated in Remark~\ref{rmk:3a}.
 
\begin{figure}[t]
\begin{center}
\includegraphics[width=45mm]{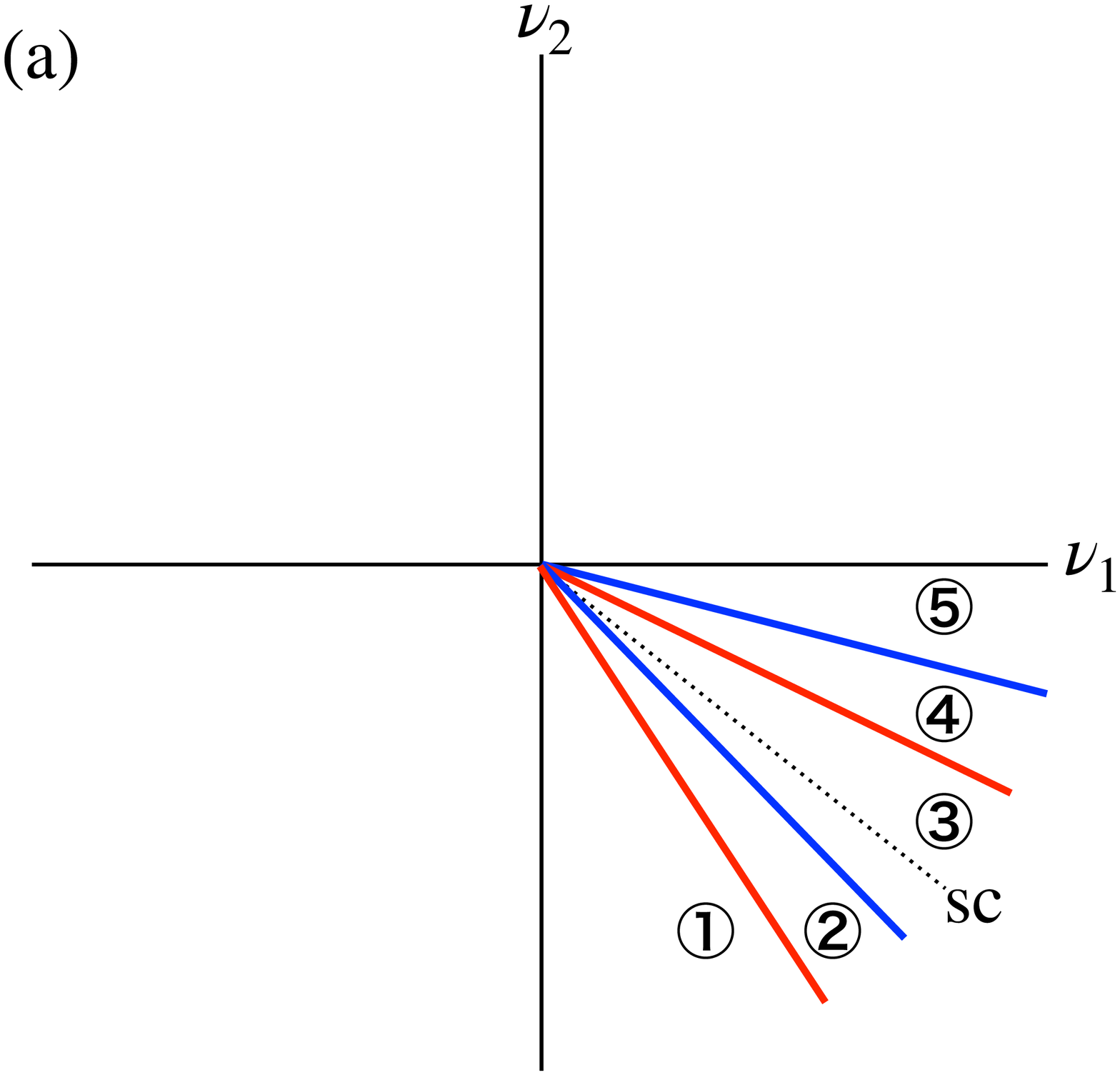}\quad
\includegraphics[width=45mm]{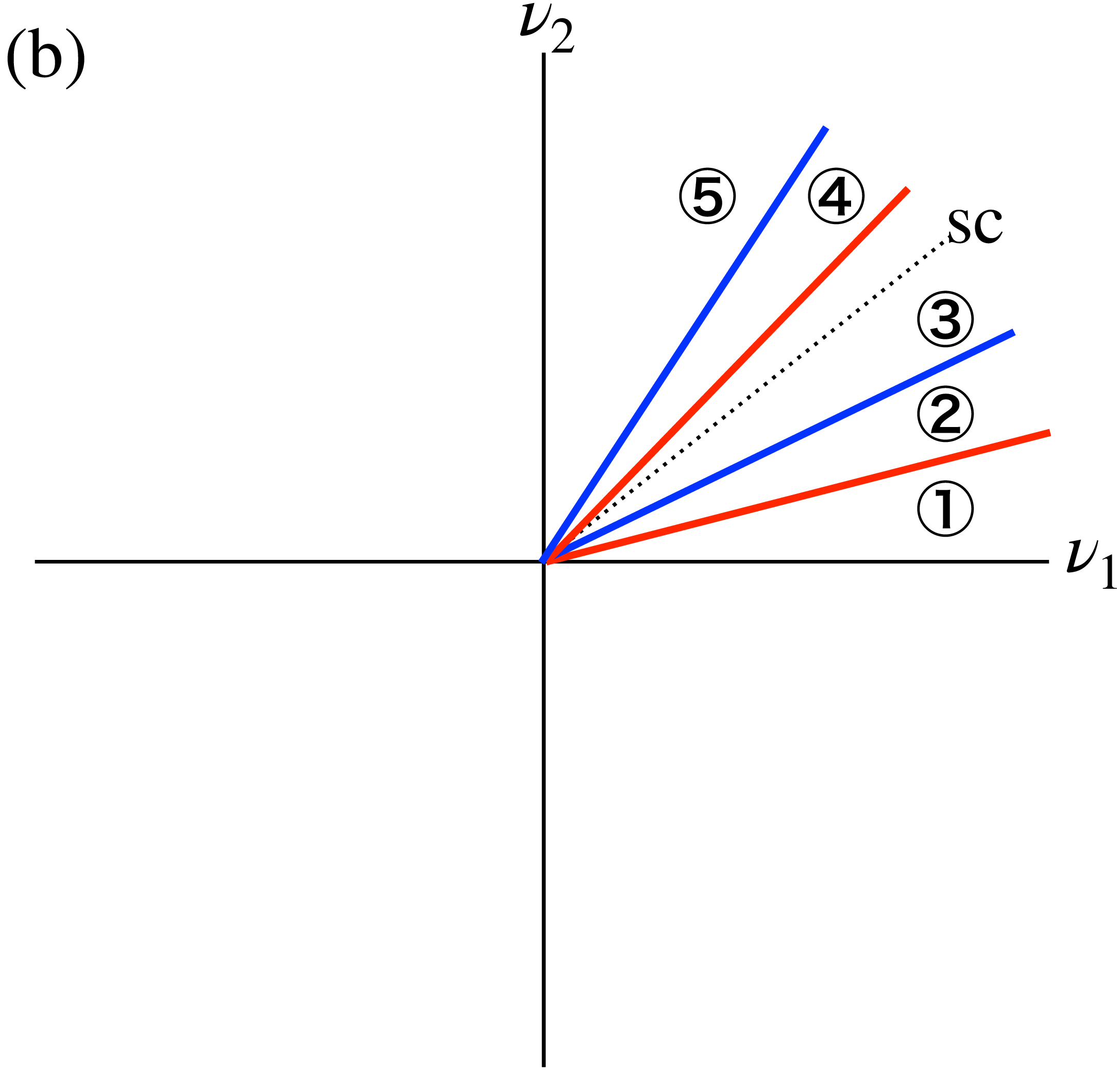}\\[2ex]
\includegraphics[width=100mm]{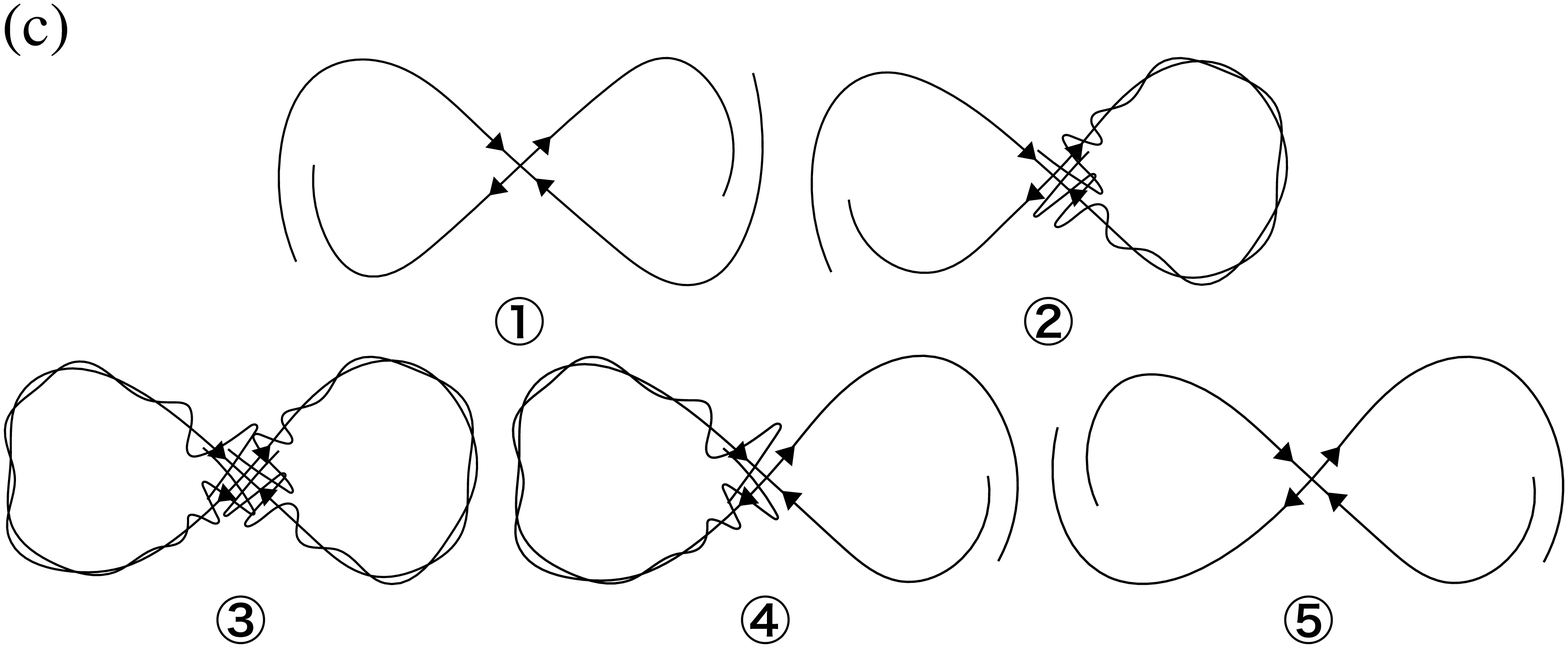}
\end{center}
\caption{
Homoclinic bifurcations in \eqref{eqn:psys2b}:
(a) Approximate bifurcation sets for $s_2=1$;
(b) for $s_2=-1$;
(c) stable and unstable manifolds for a Poincar\'e map.}
\label{fig:thm4b}
\end{figure}

Figures~\ref{fig:thm4b}(a) and (b)
 show the approximate homoclinic bifurcation sets detected by Theorem~\ref{thm:4b}
 for $s_2=1$ and $-1$, respectively.
The red and blue lines correspond
 to the bifurcation sets \eqref{eqn:thm4b1} and \eqref{eqn:thm4b2}, respectively.
Here we have assumed that $|\hat{h}_{\mathrm{min}}|<|\hat{h}_{\mathrm{max}}|$.
Figure~\ref{fig:thm4b}(c) shows the behavior
 of the  stable and unstable manifolds of periodic orbits near the origin for a Poincar\'e map
 in regions \textcircled{\scriptsize 1}-\textcircled{\scriptsize 5} of both figures.
In particular, in regions \textcircled{\scriptsize 2}-\textcircled{\scriptsize 4},
 there are transverse homoclinic orbits which yield chaotic motions, as stated above.
When $|\hat{h}_{\mathrm{min}}|>|\hat{h}_{\mathrm{max}}|$,
 the loci of the red and blue lines are exchanged in Figs.~\ref{fig:thm4b}(a) and (b);
 and when $|\hat{h}_{\mathrm{min}}|=|\hat{h}_{\mathrm{max}}|$,
 regions \textcircled{\scriptsize 2} and \textcircled{\scriptsize 4} disappear
 or their widths become $o(\sqrt{\nu_1})$.

\subsubsection{Subharmonic orbits}

We apply the subharmonic Melnikov method of \cite{Y96}.
We begin with the families of periodic orbits $\zeta_\pm^k(t)$ given by \eqref{eqn:po2b1}.
Let $\Omega_\pm^k$ and $I_\pm^k$, respectively,
 represent the angular frequency and action of  $\zeta_\pm^k(t)$,
 where $I_\pm^k$ are given by \eqref{eqn:Ik} with $\zeta^k(t)=\zeta_\pm^k(t)$.
The Hamiltonian energy for $\zeta_\pm^k(t)$ is given by
\begin{align*}
H^k=\frac{k^2-1}{(2-k^2)^2},
\end{align*}
so that
\begin{align*}
\frac{\d H^k}{\d k}=
\frac{2k^3}{(2-k^2)^3}>0.
\end{align*}
Since $\d H^k/\d I_\pm^k=\Omega_\pm^k>0$ and
\[
\frac{\d}{\d k}\left[(2-k^2)E(k)-2(1-k^2)K(k)\right]
=-3k\left[E(k)-K(k)\right]>0,
\]
we have
\[
\frac{\d I_\pm^k}{\d k}=\frac{\d H^k/\d k}{\d H^k/\d I_\pm^k}>0
\]
and
\begin{align*}
\frac{\d\Omega_\pm^k}{\d k}
=-\frac{\pi[(2-k^2)E(k)-2(1-k^2)K(k)]}{k(2-k^2)^{3/2}(1-k^2)K(k)}<0.
\end{align*}
Hence,
\begin{align*}
\frac{\d\Omega_\pm^k}{\d I_\pm^k}
 =\frac{\d\Omega_\pm^k/\d k}{\d I_\pm^k/\d k}<0.
\end{align*}

Let $nT^k=m\hat{T}$ for $m,n>0$ relatively prime integers.
Recall that $\hat{T}=|c/d|\hat{\epsilon}\,T$.
We compute the subharmonic Melnikov functions as
\begin{align*}
M_\pm^{m/n}(\phi;\hat{\nu})
=&\int_0^{m\hat{T}}\zeta_{2\pm}^k(t)
 \left[\hat{\nu}\zeta_{2\pm}^k(t)+s_2\zeta_{1\pm}^k(t)^2\zeta_{2\pm}^k(t)
 +\Delta h(\hat{\omega}t+\phi)\right]\d t\notag\\
=& \hat{\nu}J_1(k,n)+s_2J_2(k,n)\pm\Delta\hat{h}^{m/n}(\phi)
\end{align*}
and
\begin{align*}
L_\pm^{m/n}(\phi;\hat{\nu})
=& \int_0^{m\hat{T}}(\hat{\nu}+s_2\zeta_{1\pm}^k(t)^2)\d t
=m\hat{\nu}\hat{T}+s_2 J_3(k,n),
\end{align*}
where
\begin{align*}
J_1(k,n)=&\int_0^{m\hat{T}}({\zeta}_{2\pm}^k(t))^2\d t
=\frac{2k^4}{(2-k^2)^2}\int_0^{m\hat{T}}\sn^2\left(\frac{t}{\sqrt{2-k^2}}\right)
\cn^2\left(\frac{t}{\sqrt{2-k^2}}\right)\d t\notag\\
=&\frac{4n}{3(2-k^2)^{3/2}}\left[(2-k^2)E(k)-k'^2K(k)\right],\\
J_2(k,n)=&\int_0^{m\hat{T}}\zeta_{1\pm}^k(t)^2\zeta_{2\pm}^k(t)^2\d t\\
=&\frac{4k^4}{(2-k^2)^3}\int_0^{m\hat{T}}
\dn^2\left(\frac{t}{\sqrt{2-k^2}}\right)
\sn^2\left(\frac{t}{\sqrt{2-k^2}}\right)
\cn^2\left(\frac{t}{\sqrt{2-k^2}}\right)\d t\notag\\
=&\frac{8n}{15(2-k^2)^{5/2}}\left[2(k^4-k^2+1)E(k)-k'^2(2-k^2)K(k)\right],\\
J_3(k,n)=&\int_0^{m\hat{T}}\zeta_{1\pm}^k(t)^2\d t
=\frac{2}{2-k^2}\int_0^{m\hat{T}}\dn^2\left(\frac{t}{\sqrt{2-k^2}}\right)\d t
=\frac{4nE(k)}{\sqrt{2-k^2}}
\end{align*}
and $\hat{h}^{m/n}(\phi)$ is given by \eqref{eqn:hathmn}
 with $\zeta_2^k(t)=\zeta_{2+}^k(t)$,
 for which the statement of Lemma~\ref{lem:3b} also holds.
Recall that $k'=\sqrt{1-k^2}$ is complementary elliptic modulus.
We prove the following theorem like Theorems~\ref{thm:3b} and \ref{thm:4a}.

\begin{thm}
\label{thm:4c}
Suppose that $\hat{h}^{m/n}(\phi)\not\equiv 0$
 and take $\nu_1$ or $\nu_2$ as a control parameter.
Then saddle-node bifurcations of $m\hat{T}$-periodic $($resp. $mT$-periodic$)$ orbits
 near the unperturbed periodic orbits with $\zeta_1>0$ and with $\zeta_1<0$ occur near the curves
\begin{equation}
\nu_2=-\frac{\Delta\hat{h}_{\mathrm{max}}^{m/n}+s_2J_2(k,n)}{J_1(k,n)}\nu_1,\quad
\nu_2=-\frac{\Delta\hat{h}_{\mathrm{min}}^{m/n}+s_2J_2(k,n)}{J_1(k,n)}\nu_1,\quad
\nu_1>0,
\label{eqn:thm4c1}
\end{equation}
and
\begin{equation}
\nu_2=\frac{\Delta\hat{h}_{\mathrm{min}}^{m/n}-s_2J_2(k,n)}{J_1(k,n)}\nu_1,\quad
\nu_2=\frac{\Delta\hat{h}_{\mathrm{max}}^{m/n}-s_2J_2(k,n)}{J_1(k,n)}\nu_1,\quad
\nu_1>0,
\label{eqn:thm4c2}
\end{equation}
in \eqref{eqn:psys2b} {\rm(resp. in \eqref{eqn:sys})}, respectively.
\end{thm}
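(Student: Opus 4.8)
The plan is to imitate the proofs of Theorems~\ref{thm:3b} and \ref{thm:4a}, applying the subharmonic Melnikov method of \cite{Y96} (Theorem~4.1 there) separately to the two families of periodic orbits $\zeta_+^k(t)$ and $\zeta_-^k(t)$ of \eqref{eqn:po2b1}, which are precisely the unperturbed periodic orbits with $\zeta_1>0$ and with $\zeta_1<0$, respectively. First I would check that the hypotheses of that theory hold here: the nondegeneracy $\d\Omega_\pm^k/\d I_\pm^k\neq 0$ of the unperturbed frequency map has already been verified above (it is negative), so $T_\pm^k$ is strictly monotone in $k$ and for each admissible pair of relatively prime integers $m,n>0$ the resonance relation $nT_\pm^k=m\hat T$ determines $k$ uniquely. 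Since $\zeta_-^k(t)=-\zeta_+^k(t)$, the squares $(\zeta_{1\pm}^k)^2$ and $(\zeta_{2\pm}^k)^2$ are independent of the sign, so the integrals $J_1(k,n),J_2(k,n),J_3(k,n)$ are common to both families, while the perturbation $\Delta h(\hat\omega t+\phi)$ contributes $\pm\int_0^{m\hat T}\zeta_{2+}^k(t)h(\hat\omega t+\phi)\,\d t=\pm\Delta\hat h^{m/n}(\phi)$ to $M_\pm^{m/n}$. This $\pm$, already visible in the computed Melnikov functions $M_\pm^{m/n}(\phi;\hat\nu)=\hat\nu J_1(k,n)+s_2J_2(k,n)\pm\Delta\hat h^{m/n}(\phi)$, is what will split the conclusion into the two families of curves \eqref{eqn:thm4c1} and \eqref{eqn:thm4c2}.

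Next I would treat the family $\zeta_+^k(t)$. Assuming the first (resp. second) equation of \eqref{eqn:thm4c1}, which by $\hat\nu=\nu_2/\nu_1$ from \eqref{trans4.2} is equivalent to $\hat\nu J_1(k,n)+s_2J_2(k,n)=-\Delta\hat h^{m/n}_{\mathrm{max}}$ (resp. $-\Delta\hat h^{m/n}_{\mathrm{min}}$), I would choose $\phi_0$ attaining the maximum (resp. minimum) of $\hat h^{m/n}(\phi)$, so that $\d\hat h^{m/n}/\d\phi(\phi_0)=0$ and, generically, $\d^2\hat h^{m/n}/\d\phi^2(\phi_0)<0$ (resp. $>0$). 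Then the four conditions $M_+^{m/n}(\phi_0;\hat\nu)=0$, $\partial_\phi M_+^{m/n}(\phi_0;\hat\nu)=0$, $\partial_\phi^2 M_+^{m/n}(\phi_0;\hat\nu)\neq 0$, and $\partial_{\hat\nu}M_+^{m/n}(\phi_0;\hat\nu)=J_1(k,n)\neq 0$ hold, exactly the conditions (i)--(iv) in the proof of Theorem~\ref{thm:3b}, and Theorem~4.1 of \cite{Y96} yields a saddle-node bifurcation of $m\hat T$-periodic orbits of \eqref{eqn:psys2b} near \eqref{eqn:thm4c1}, hence of $mT$-periodic orbits of \eqref{eqn:sys}. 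Lemma~\ref{lem:3b} gives $\hat h^{m/n}_{\mathrm{max}}>0>\hat h^{m/n}_{\mathrm{min}}$ whenever $\hat h^{m/n}(\phi)\not\equiv 0$, so the two curves in \eqref{eqn:thm4c1} are genuinely distinct.

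The family $\zeta_-^k(t)$ is handled in the same way with $M_-^{m/n}(\phi;\hat\nu)=\hat\nu J_1(k,n)+s_2J_2(k,n)-\Delta\hat h^{m/n}(\phi)$: imposing $\hat\nu J_1(k,n)+s_2J_2(k,n)=\Delta\hat h^{m/n}_{\mathrm{max}}$ (resp. $\Delta\hat h^{m/n}_{\mathrm{min}}$), which is the second (resp. first) equation of \eqref{eqn:thm4c2}, forces a degenerate zero of $M_-^{m/n}$ at a maximizer (resp. minimizer) of $\hat h^{m/n}$; the same verification then gives the saddle-node bifurcations in \eqref{eqn:thm4c2}. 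In the non-generic case $\d^2\hat h^{m/n}/\d\phi^2(\phi_0)=0$ I would modify Theorem~4.1 of \cite{Y96} just as in the proof of Theorem~\ref{thm:3b}. I do not expect a serious obstacle: the argument is structurally identical to those of Theorems~\ref{thm:3b} and \ref{thm:4a}, and its one substantive ingredient --- the applicability of the subharmonic Melnikov theory, i.e.\ $\d\Omega_\pm^k/\d I_\pm^k\neq 0$ --- has already been dealt with above; the only real care needed is to keep the two signs straight so that the $+$ family produces \eqref{eqn:thm4c1} and the $-$ family produces \eqref{eqn:thm4c2}.
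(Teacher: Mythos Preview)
Your proposal is correct and follows essentially the same approach as the paper, which simply states that the theorem is proved ``like Theorems~\ref{thm:3b} and \ref{thm:4a}'' without writing out the details; your expansion --- applying Theorem~4.1 of \cite{Y96} separately to $\zeta_+^k$ and $\zeta_-^k$, using $\hat\nu=\nu_2/\nu_1$, verifying conditions (i)--(iv), and handling the degenerate second-derivative case by the same modification as in Theorem~\ref{thm:3b} --- is exactly what the paper intends.
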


\begin{figure}[t]
\begin{center}
\includegraphics[width=45mm]{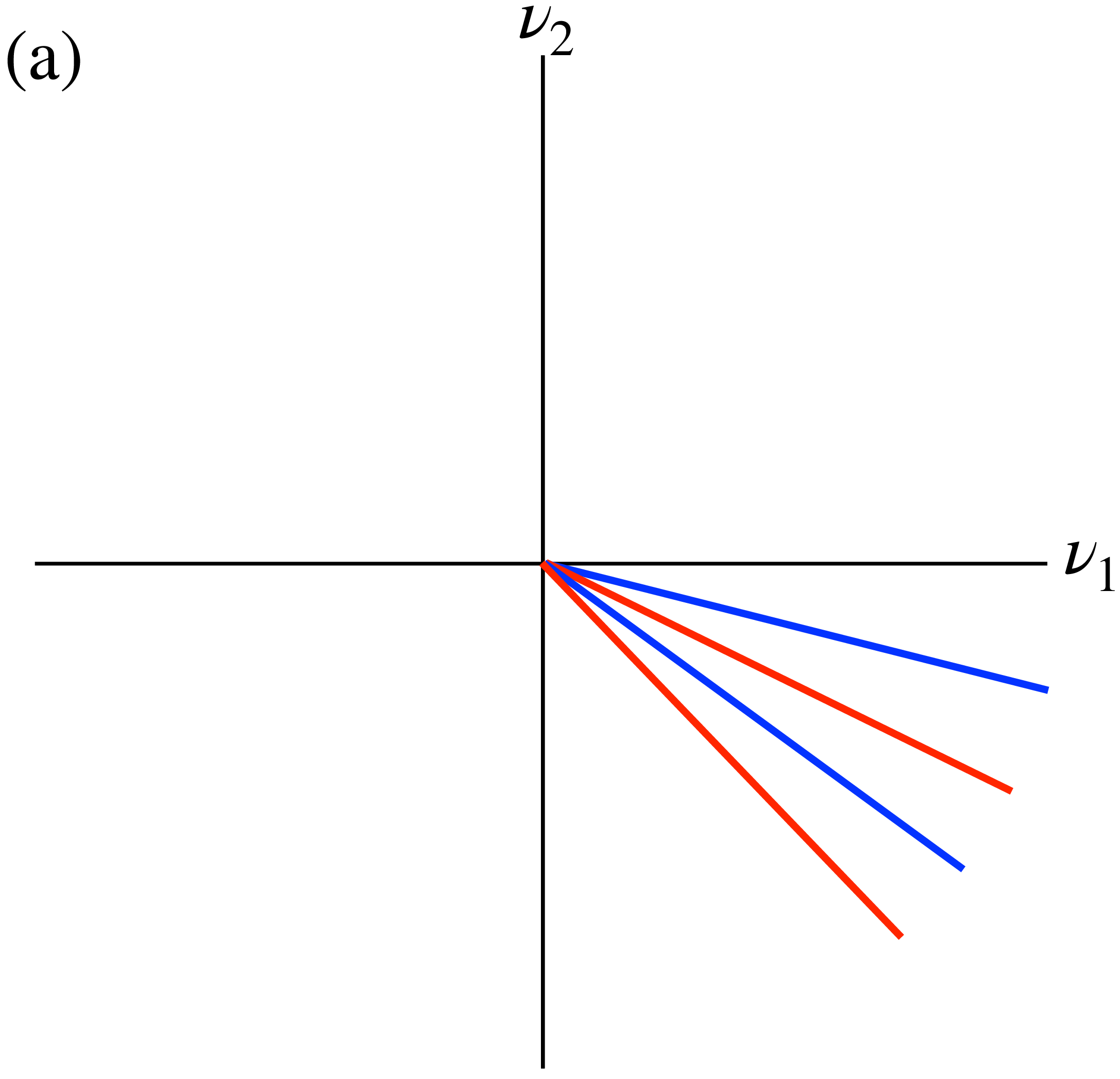}\quad
\includegraphics[width=45mm]{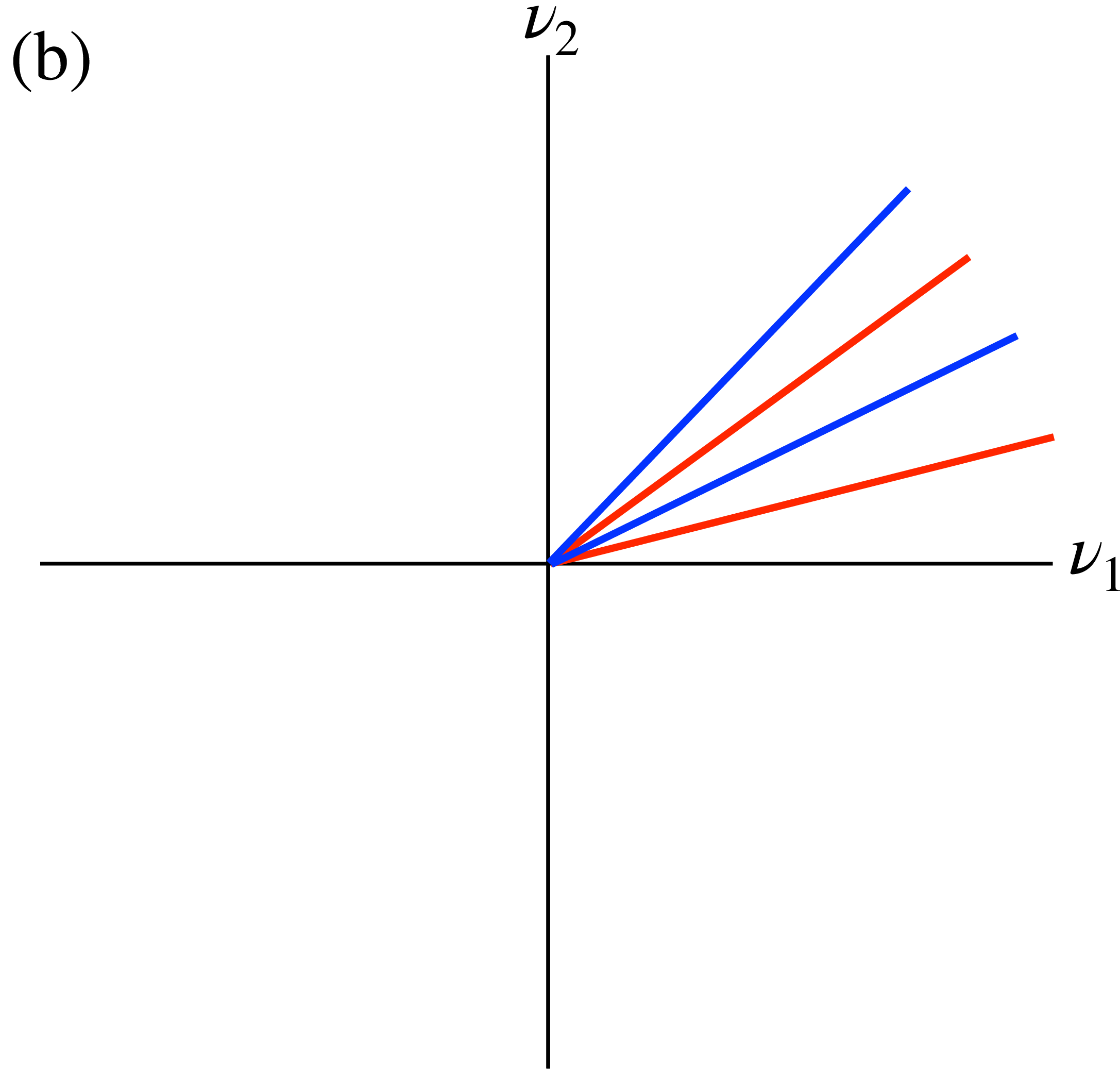}
\end{center}
\caption{Sddle-node bifurcations in \eqref{eqn:psys2b}:
(a) Approximate bifurcation sets for $s_2=1$;
(b) for $s_2=-1$.}
\label{fig:thm4c}
\end{figure}

Figures~\ref{fig:thm4c}(a) and (b)
 show the approximate saddle-node bifurcation sets detected by Theorem~\ref{thm:4c}
 for $s_2=1$ and $-1$, respectively.
Here we have assumed that $|\hat{h}_{\mathrm{min}}|<|\hat{h}_{\mathrm{max}}|$.
The red and blue lines correspond
 to the bifurcation sets \eqref{eqn:thm4c1} and \eqref{eqn:thm4c2}, respectively.
The behavior of a Poincar\'e map near the resonant periodic orbit $\zeta_+^k(t)$
 (resp. near $\zeta_-^k(t)$) with $nT_+^k=m\hat{T}$ (resp. with $nT_-^k=m\hat{T}$)
 for $(m,n)=(3,1)$ in the regions
 between the two red lines (resp. the two blue lines) in Figs.~\ref{fig:thm4c}(a) and (b)
 is similar to that displayed in Figure~\ref{fig:thm3b}(c).
When $|\hat{h}_{\mathrm{min}}|>|\hat{h}_{\mathrm{max}}|$,
 the loci of the red and blue lines are exchanged in Figs.~\ref{fig:thm4c}(a) and (b).

We turn to the family of periodic orbits $\zeta^k(t)$ given by \eqref{eqn:po2b2}.
Let $\Omega^k=2\pi/T^k$ and $I_k$, respectively,
 represent the angular frequency and action of $\zeta^k(t)$,
 where $I^k$ is given by \eqref{eqn:Ik}.
The Hamiltonian energy for $\zeta^k(t)$ is given by
\begin{align*}
H^k=\frac{k^2(1-k^2)}{(2k^2-1)^2},
\end{align*}
so that
\begin{align*}
\frac{\d H^k}{\d k}=-\frac{2k}{(2k^2-1)^3}<0.
\end{align*}
Since $\d H^k/\d I^k=\Omega_\pm^k=2\pi/T^k>0$,
we have
\[
\frac{\d I^k}{\d k}=\frac{\d H^k/\d k}{\d H^k/\d I^k}<0
\]
and
\begin{align*}
\frac{\d\Omega^k}{\d k}
=-\frac{\pi[(1-k^2)(K(k)-E(k))+k^2E(k)]}{2k(1-k^2)(2k^2-1)^{3/2}K(k)^2}<0.
\end{align*}
Hence,
\begin{align*}
\frac{\d\Omega^k}{\d I^k}
 =\frac{\d\Omega^k/\d k}{\d I^k/\d k}>0.
\end{align*}

Let $nT^k=m\hat{T}$ for $m,n>0$ relatively prime integers.
Recall that $\hat{T}=|c/d|\hat{\epsilon}\,T$.
We compute the subharmonic Melnikov functions as
\begin{align*}
M^{m/n}(\phi;\hat{\nu})
=&\int_0^{m\hat{T}}\zeta_2^k(t)
 \left[\hat{\nu}\zeta_2^k(t)+s_2\zeta_1^k(t)^2\zeta_2^k(t)
 +\Delta h(\hat{\omega}t+\phi)\right]\d t\notag\\
=& \hat{\nu}J_1(k,n)+s_2J_2(k,n)+\Delta\hat{h}^{m/n}(\phi)
\end{align*}
and
\begin{align*}
L^{m/n}(\phi;\hat{\nu})
=& \int_0^{m\hat{T}}(\hat{\nu}+s_2\zeta_1^k(t)^2)\d t
=m\hat{\nu}\hat{T}+s_2 J_3(k,n),
\end{align*}
where
\begin{align*}
J_1(k,n)=&\int_0^{m\hat{T}}{\zeta}_2^k(t)^2\d t
=\frac{2k^2}{(2k^2-1)^2}\int_0^{m\hat{T}}\sn^2\left(\frac{t}{\sqrt{2k^2-1}}\right)
\dn^2\left(\frac{t}{\sqrt{2k^2-1}}\right)\d t\notag\\
=&\frac{8n}{3(2k^2-1)^{3/2}}\left[(2k^2-1)E(k)+k'^2K(k)\right]>0,\\
J_2(k,n)=&\int_0^{m\hat{T}}\zeta_1^k(t)^2\zeta_2^k(t)^2\d t\\
=&\frac{4k^4}{(2k^2-1)^3}\int_0^{m\hat{T}}
\dn^2\left(\frac{t}{\sqrt{2k^2-1}}\right)
\sn^2\left(\frac{t}{\sqrt{2k^2-1}}\right)
\cn^2\left(\frac{t}{\sqrt{2k^2-1}}\right)\d t\\
=&\frac{16n}{15(2k^2-1)^{5/2}}\left[2(k^4-k^2+1)E(k)-k'^2(2-k^2)K(k)\right]>0,\\
J_3(k,n)=&\int_0^{m\hat{T}}\zeta_1^k(t)^2\d t
=\frac{2k^2}{2k^2-1}\int_0^{m\hat{T}}\cn^2\left(\frac{t}{\sqrt{2k^2-1}}\right)\d t\notag\\
=&\frac{8n}{\sqrt{2k^2-1}}\left[E(k)-k'^2K(k)\right]>0
\end{align*}
and $\hat{h}^{m/n}(\phi)$ is given by \eqref{eqn:hathmn}
 with $\zeta_2^k(t)$ of \eqref{eqn:po2b2},
 for which the statement of Lemma~\ref{lem:3b} also holds.
We prove the following theorem like Theorems~\ref{thm:3b}, \ref{thm:4a} and \ref{thm:4c}.

\begin{thm}
\label{thm:4d}
Suppose that $\hat{h}^{m/n}(\phi)\not\equiv 0$
 and take $\nu_1$ or $\nu_2$ as a control parameter.
Then saddle-node bifurcations of $m\hat{T}$-periodic $($resp. $mT$-periodic$)$ orbits occur near
\begin{equation}
\nu_2=-\frac{\Delta\hat{h}_{\mathrm{max}}+s_2J_2(k,n)}{J_1(k,n)}\nu_1
\quad\mbox{and}\quad
\nu_2=-\frac{\Delta\hat{h}_{\mathrm{min}}+s_2J_2(k,n)}{J_1(k,n)}\nu_1,\quad
\nu_1>0
\label{eqn:thm4d}
\end{equation}
in \eqref{eqn:psys2b} {\rm(resp. in \eqref{eqn:sys})}.
\end{thm}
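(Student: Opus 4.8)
The plan is to follow verbatim the argument already used for Theorem~\ref{thm:3b}, and reprised for Theorems~\ref{thm:4a} and \ref{thm:4c}, since the subharmonic Melnikov function for the outer family \eqref{eqn:po2b2},
\[
M^{m/n}(\phi;\hat{\nu})=\hat{\nu}J_1(k,n)+s_2J_2(k,n)+\Delta\hat{h}^{m/n}(\phi),
\]
has exactly the same shape: affine in $\hat{\nu}$ with slope $J_1(k,n)>0$ (computed just above the statement), and $2\pi$-periodic in $\phi$ through $\hat{h}^{m/n}$, for which Lemma~\ref{lem:3b} gives $\hat{h}_{\mathrm{max}}^{m/n}>0>\hat{h}_{\mathrm{min}}^{m/n}$ whenever $\hat{h}^{m/n}\not\equiv0$. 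Note that the subharmonic Melnikov theory of \cite{Y96} applies because the frequency map of the unperturbed outer family is nondegenerate, $\d\Omega^k/\d I^k>0$, as verified immediately before the statement; and that, unlike the inner families $\zeta_\pm^k$ treated in Theorem~\ref{thm:4c}, the outer region carries a single one-parameter family, so one obtains a single pair of curves rather than two.

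First I would fix $\phi_0$ to be a point where $\hat{h}^{m/n}$ attains its maximum (resp. minimum), so $\d\hat{h}^{m/n}/\d\phi(\phi_0)=0$, and choose $\hat{\nu}$ so that $\hat{\nu}J_1(k,n)+s_2J_2(k,n)=-\Delta\hat{h}_{\mathrm{max}}^{m/n}$ (resp. $-\Delta\hat{h}_{\mathrm{min}}^{m/n}$); recall $\Delta=\epsilon/\hat{\epsilon}^4>0$, so it is indeed the maximum (resp. minimum) of $\hat{h}^{m/n}$ that produces a maximum (resp. minimum) of $M^{m/n}$. This forces $M^{m/n}(\phi_0;\hat{\nu})=0$ and $\partial M^{m/n}/\partial\phi(\phi_0;\hat{\nu})=0$, while $\partial M^{m/n}/\partial\hat{\nu}(\phi_0;\hat{\nu})=J_1(k,n)\neq0$. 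If the extremum of $\hat{h}^{m/n}$ at $\phi_0$ is nondegenerate, i.e., $\d^2\hat{h}^{m/n}/\d\phi^2(\phi_0)<0$ (resp. $>0$), then $\partial^2M^{m/n}/\partial\phi^2(\phi_0;\hat{\nu})\neq0$ as well, and Theorem~4.1 of \cite{Y96} yields a saddle-node bifurcation of $m\hat{T}$-periodic orbits of \eqref{eqn:psys2b} at this parameter value; if the extremum is degenerate one invokes the same modification of that theorem used in the proof of Theorem~\ref{thm:3b}.

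It remains to express the condition $\hat{\nu}J_1(k,n)+s_2J_2(k,n)=-\Delta\hat{h}_{\mathrm{max}}^{m/n}$ or $-\Delta\hat{h}_{\mathrm{min}}^{m/n}$ in the original parameters. In contrast to Sections~\ref{section3} and \ref{section4.1}, the scaling \eqref{trans4.2} here reads $\hat{\nu}=\nu_2/\nu_1$ with $\nu_1=\hat{\epsilon}^2>0$, hence $\nu_2=\hat{\nu}\nu_1$, and solving for $\hat{\nu}$ produces precisely the two curves in \eqref{eqn:thm4d} (the sign flip relative to Theorem~\ref{thm:3b} being exactly this change of scaling); undoing the time rescaling $t\to\sqrt{\nu_1}\,t$ turns $m\hat{T}$-periodicity into $mT$-periodicity in \eqref{eqn:sys}. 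I expect the only step that is not pure bookkeeping with the already-computed integrals $J_1,J_2$ and the scalings to be the degenerate-extremum case, which is dispatched exactly as in the earlier theorems.
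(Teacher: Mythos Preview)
Your proposal is correct and follows exactly the approach the paper takes: the paper simply states that Theorem~\ref{thm:4d} is proved ``like Theorems~\ref{thm:3b}, \ref{thm:4a} and \ref{thm:4c}'' without repeating the argument, and you have reproduced that argument with the appropriate adjustment for the scaling $\hat{\nu}=\nu_2/\nu_1$ (rather than $-\nu_2/\nu_1$) that accounts for the sign in \eqref{eqn:thm4d}.
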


\begin{rmk}
\label{rmk:4b}
As in Remarks~$\ref{rmk:3b}$ and $\ref{rmk:4a}$,
 one of periodic orbits born at the saddle-node bifurcations
 detected in Theorem~$\ref{thm:4c}$ or $\ref{thm:4d}$
 is of a sink $($resp. a source$)$ type and the other is of a saddle type if
\[
m\hat{\nu}\hat{T}+s_2J_3(k,n)<0\quad(\mbox{resp. $>0$}),
\]
i.e.,
\[
\nu_2<-\frac{s_2J_3(k,n)}{m\hat{T}}\nu_1\quad
\left(\mbox{resp. $\displaystyle\nu_2>-\frac{s_2J_3(k,n)}{m\hat{T}}\nu_1$}\right),
\]
and the former changes its stability
 from a source type to a sink one or vice versa near
\[
\nu_2=-\frac{s_2J_3(k,n)}{m\hat{T}}\nu_1,\quad
\nu_1>0.
\]
This suggests that a Hopf bifurcation may occur there.
\end{rmk}

\begin{figure}[t]
\begin{center}
\includegraphics[width=45mm]{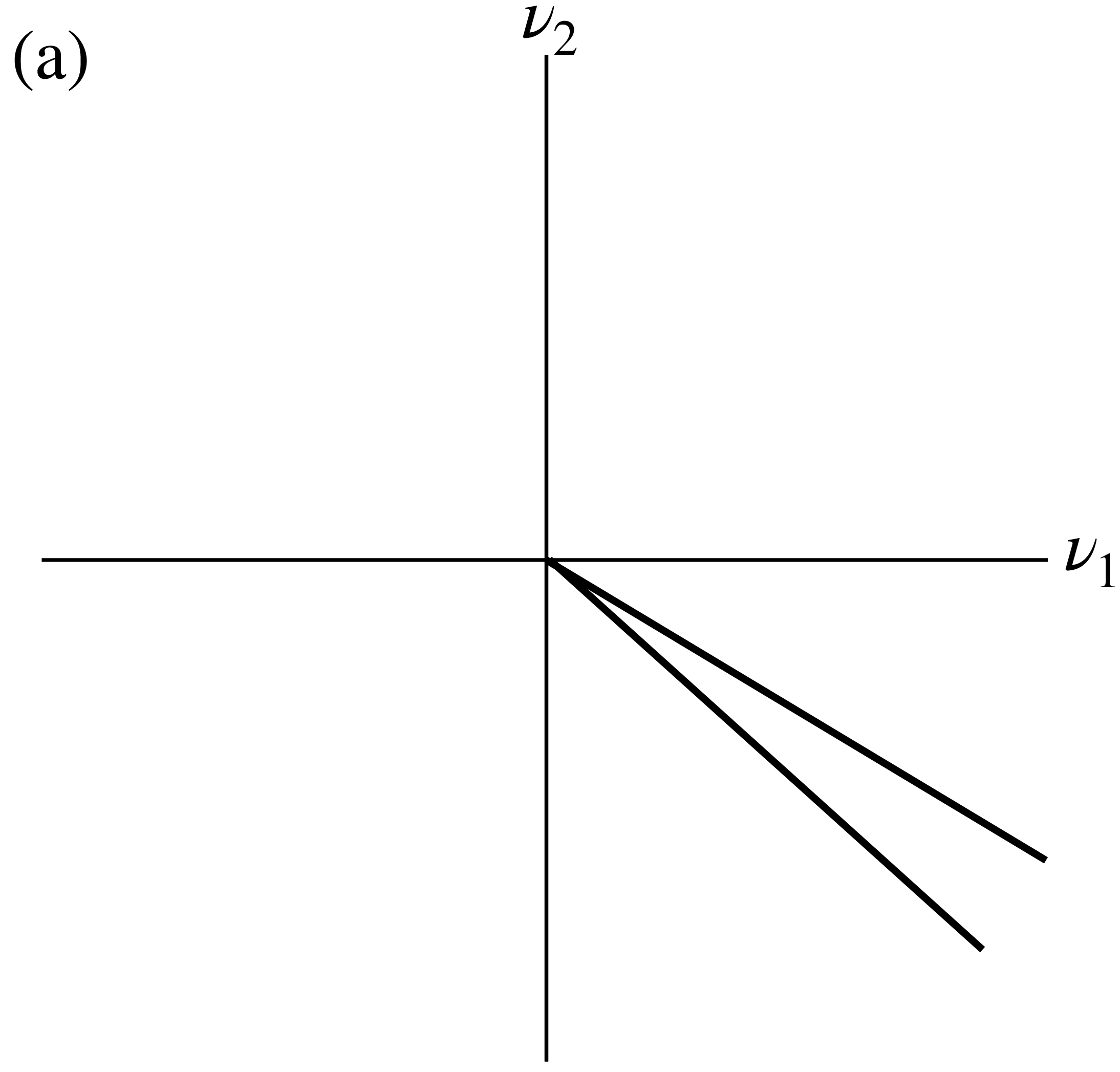}\quad
\includegraphics[width=45mm]{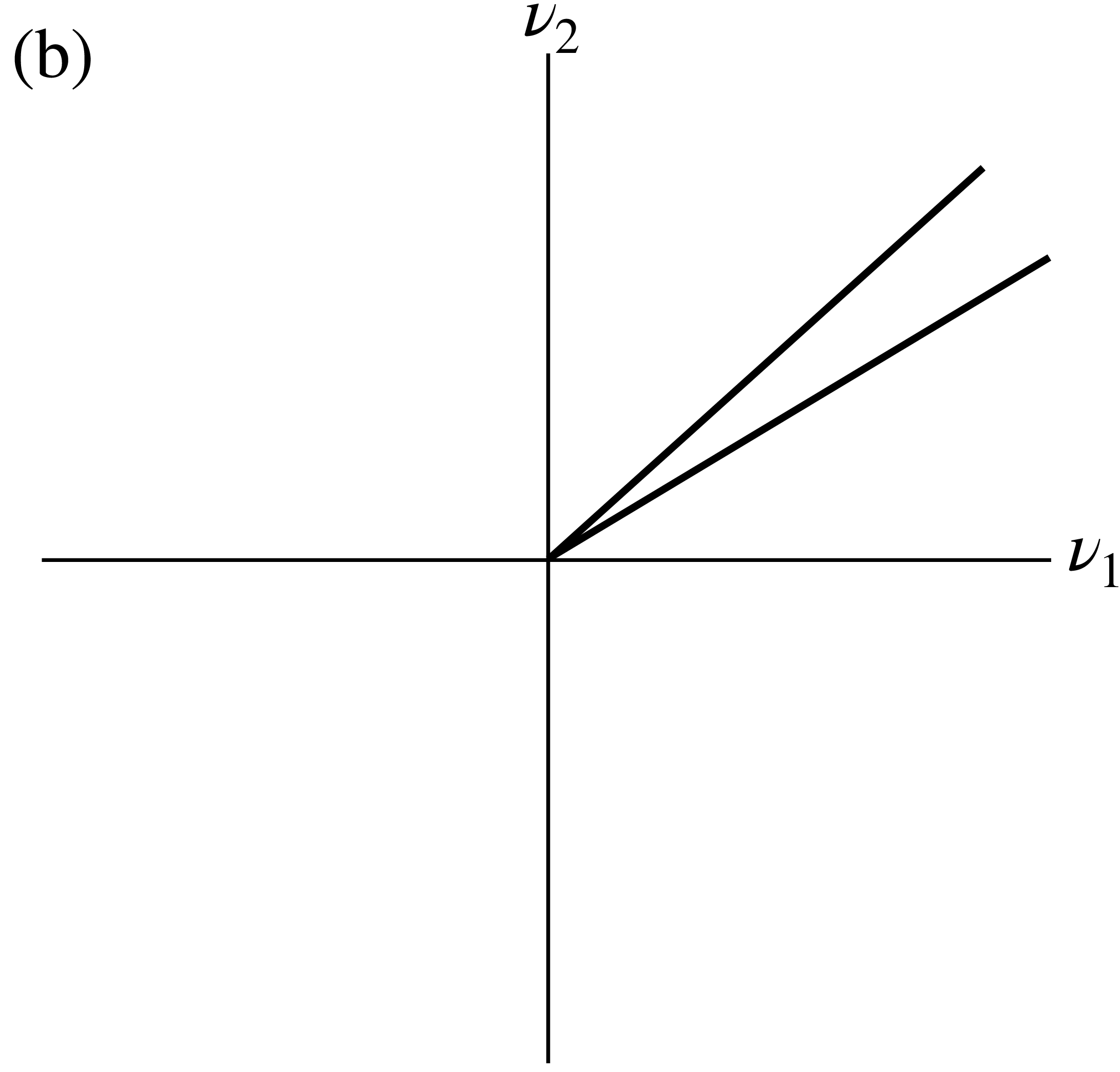}
\end{center}
\caption{Saddle-node bifurcations of \eqref{eqn:psys2b}:
(a) Approximate bifurcation sets for $s_2=1$;
(b) for $s_2=-1$.}
\label{fig:thm4d}
\end{figure}

Figures~\ref{fig:thm4d}(a) and (b)
 show the approximate saddle-node bifurcation sets detected by Theorem~\ref{thm:4d}
 for $s_2=1$ and $-1$, respectively.
The behavior of a Poincar\'e map near the resonant periodic orbit $\zeta^k(t)$
with $nT_+^k=m\hat{T}$ for $(m,n)=(3,1)$ is similar to that displayed in Fig.~\ref{fig:thm3b}(c).

\section{Higher-dimensional systems}

Let $\epsilon>0$ be a small parameter
 and let $\omega>0$ be a constant, as in the previous sections.
Consider $(d+2)$-dimensional systems of the form
\begin{equation}
\begin{split}
\dot{u}=&Ju+f_u(u,v;\mu)+\epsilon g_u(u,v,\omega t),\\
\dot{v}=&Av+f_v(u,v;\mu)+\epsilon g_v(u,v,\omega t),
\end{split}\qquad
(u,v)\in\Rset^2\times\Rset^d,
\label{eqn:sysuv}
\end{equation}
with a control parameter vector $\mu\in\Rset^2$,
 where $A$ is an $n\times n$ matrix of which all eigenvalues have nonzero real parts,
 $f_u:\Rset^2\times\Rset^d\times\Rset^2\to\Rset^2$
 and $f_v:\Rset^2\times\Rset^d\times\Rset^2\to\Rset^d$
 are $C^4$ and symmetric around $(u,v)=(0,0)$, i.e.,
\begin{equation}
f_u(-u,-v;\mu)=-f(u,v;\mu),\quad
f_v(-u,-v;\mu)=-f(u,v;\mu),
\label{eqn:con5}
\end{equation}
with $f_u(u,v;\mu),f_v(u,v;\mu)=O(|u|^3+|v|^3+|\mu|(|u|+|v|))$,
 $g_u:\Rset^2\times\Rset^d\times\Rset\to\Rset^2$
 and $g_v:\Rset^2\times\Rset^d\times\Rset\to\Rset^d$ are $C^2$,
 and $g_u(u,v,\phi)$ and $g_v(u,v,\phi)$ have mean zero in $\phi$
 for any $(u,v)\in\Rset^2\times\Rset^d$.
In particular, $(u,v)=(0,0)$ is an equilibrium in \eqref{eqn:sysuv} for $\mu\neq 0$
 since by \eqref{eqn:con5}
\[
f_u(0,0;\mu)=f_v(0,0;\mu)=0.
\]
Recall that $J$ is the Jordan normal form given by \eqref{eqn:J}.
We extend \eqref{eqn:sysuv} to the $(d+5)$-dimensional system
\begin{equation}\begin{split}
\dot{u}=&Ju+f_u(u,v;\mu)+\epsilon g_u(u,v,\phi),\\
\dot{v}=&Av+f_v(u,v;\mu)+\epsilon g_v(u,v,\phi),\\
\dot{\phi}=&\omega,\quad
\dot{\mu}=0
\end{split}
\label{eqn:esys}
\end{equation}
in $\Rset^2\times\Rset^d\times\Sset^1\times\Rset^2$.

When $\epsilon=0$,
 the system \eqref{eqn:esys} has a one-dimensional invariant manifold
\[
\M=\{(u,v,\phi,\mu)\mid (u,v,\mu)=(0,0,0),\phi\in\Sset^1\}
\]
for which there is a five-dimensional center manifold
\[
W^\c(\M)=\{(u,v,\phi,\mu)\mid v=\bar{v}(u,\mu),\phi\in\Sset^1,\mu\in U_0\}
\]
near $\M$, where $\bar{v}(u,\mu)=O(|u|^3+|\mu||u|)$
 and $U_0\subset\Rset^2$ is a neighborhood of $\mu=0$.
The dynamics of \eqref{eqn:esys} with $\epsilon=0$ on $W^\c(\M)$ are governed by
\[
\dot{u}=Ju+f_u(u,0;\mu)+O(|u|^5+|\mu|\,|u|^3),\quad
\dot{\phi}=\omega,\quad
\dot{\mu}=0
\]
since
\[
f_u(u,\bar{v}(u;\mu);\mu)=f_u(u,0;\mu)+O(|u|^5+|\mu|\,|u|^3).
\]
See \cite{C81,GH83,HI11,K04,W03} for the general approach of center manifold reduction.
$\N_0=W^\c(\M)$ is also a normally hyperbolic, locally invariant manifold \cite{E13,F71,F74,W94}.

Let $\epsilon>0$.
From the invariant manifold theory of Fenichel \cite{F71,F74} (see also \cite{E13,W94})
 we show that there exists a five-dimensional normally hyperbolic, locally invariant manifold 
\[
\N_\epsilon=\{(u,v,\phi,\mu)\mid
 v=\bar{v}(u,\mu)+\epsilon\tilde{v}(u,\phi,\mu),\phi\in\Sset^1,\mu\in U\}
\]
near $\N_0$, where $U\subset U_0$ is a neighborhood of $\mu=0$.
The dynamics of \eqref{eqn:esys} on $\N_\epsilon$ are governed by
\begin{equation}
\begin{split}
\dot{u}=&Ju+f_u(u,\bar{v}(u,\mu)+\epsilon\tilde{v}(u,\phi,\mu);\mu)
 +\epsilon g_u(u,\bar{v}(u,\mu)+\epsilon\tilde{v}(u,\phi,\mu),\phi),\\
\dot{\phi}=&\omega,\quad
\dot{\mu}=0.
\end{split}
\label{eqn:esyse}
\end{equation}
Here we have
\begin{align*}
&
f_u(u,\bar{v}(u,\mu)+\epsilon\tilde{v}(u,\phi,\mu);\mu)
=f_u(u,0;\mu)+O(|u|^5+|\mu|\,|u|^3+\epsilon|u|^2),\\
&
g_u(u,\bar{v}(u,\mu)+\epsilon\tilde{v}(u,\phi,\mu),\phi)
=g_u(u,0,\phi)+O(|u|^3+\epsilon)
\end{align*}
by assumption.
Hence, letting $\phi=\omega t$,
 we rewrite the first equation of \eqref{eqn:esyse} as
\begin{equation}
\dot{u}=Ju+f_u(u,0;\mu)+\epsilon g_u(u,0,\omega  t)
 +O(|u|^5+|\mu|\,|u|^3+\epsilon|u|^2+\epsilon^2).
\label{eqn:esyse1}
\end{equation}
We eliminate the higher-order terms in \eqref{eqn:esyse1},
 and apply the arguments of Section~2 to obtain \eqref{eqn:pnf}.

\section{Example}

We apply our theory
 to the three-dimensional system \eqref{eqn:ex} with \eqref{eqn:td}.
Henceforth we assume that $\theta_0=0$ or $\pi$ and that $\delta_1\neq \pm 1$. 
Recall that $\alpha,\delta_0>0$.
Following the approach of \cite{Y99} basically,
 we first obtain a two-dimensional system of the form \eqref{eqn:pnf}
 governing its dynamics approximately on the locally invariant manifold $\N_\epsilon$.

\subsection{Center manifold reduction}

Let $\epsilon=0$ in \eqref{eqn:td}, i.e., $\theta_\d=\theta_0\,(=0$ or $\pi$).
The system \eqref{eqn:ex} has an equilibrium at $z=(\theta_\d,0,0)$
 and its right hand side has  the Jacobian matrix
\begin{equation}
\begin{pmatrix}
0 & 1 & 0\\
-\sigma & -\delta_0 & 1\\
-\gamma & -\delta_1 & -\alpha
\end{pmatrix}
\label{eqn:Jex}
\end{equation}
at the equilibrium,
 where $\sigma=1$ if $\theta_0=0$ and $\sigma=-1$ if $\theta_\d=\pi$.
So we easily see that the matrix \eqref{eqn:Jex} has a double zero when
\begin{equation}
\alpha\delta_0+\delta_1+\sigma=0,\quad
\gamma=-\sigma\alpha
\label{eqn:d0ex}
\end{equation}
(see Section~3 of \cite{Y99}).
Let
\begin{equation}
\alpha_0=-\frac{\sigma+\delta_1}{\delta_0},\quad
\gamma_0=\frac{\sigma(\sigma+\delta_1)}{\delta_0},
\label{eqn:exag0}
\end{equation}
so that $\alpha=\alpha_0$ and $\gamma=\gamma_0$ satisfy \eqref{eqn:d0ex}.
The system~\eqref{eqn:ex} with $\theta_\d=0$ or $\pi$
 possesses a symmetry about $z=(0,0,0)$ or $(\pi,0,0)$,
 i.e., it is invariant under the transformation
\[
(z_1,z_2,z_3)\mapsto(-z_1,-z_2,-z_3)\quad\mbox{or}\quad
(z_1-\pi,z_2,z_3)\mapsto(\pi-z_1,-z_2,-z_3).
\]

We now set $\beta>0$.
Letting
\[
\tilde{z}_1=z_1-\theta_0,\quad
\tilde{z}_j=z_j,\quad
j=2,3,
\]
we rewrite \eqref{eqn:ex} as
\begin{align}
\begin{pmatrix}
\dot{\tilde{z}}_1 \\ 
\dot{\tilde{z}}_2\\ 
\dot{\tilde{z}}_3
\end{pmatrix}=&\begin{pmatrix}
0 & 1 & 0 \\ 
-\sigma & -\delta_0 & 1 \\ 
-(\gamma_0+\gamma_1) & -\delta_1 & -(\alpha_0+\alpha_1)
\end{pmatrix} 
\begin{pmatrix}
\tilde{z}_1 \\ 
\tilde{z}_2\\ 
\tilde{z}_3
\end{pmatrix}
+\begin{pmatrix}
0 \\ 
\tfrac{1}{6}\sigma\tilde{z}_1^3\\ 
\epsilon\beta\gamma_0\cos\omega t
\end{pmatrix}+\mbox{h.o.t},
\label{eqn:ex0}
\end{align}
where $\alpha_1=\alpha-\alpha_0$ and $\gamma_1=\gamma-\gamma_0$.
Using the change of the coordinates 
\[
\begin{pmatrix}
\tilde{z}_1\\
\tilde{z}_2\\
\tilde{z}_3
\end{pmatrix}=\begin{pmatrix}
1 & 0 & \delta_0\\
0 & 1 & -\delta_0(\alpha_0+\delta_0)\\
\sigma & \delta_0 & -\delta_1(\alpha_0+\delta_0)+\gamma_0
\end{pmatrix}\begin{pmatrix}
u_1\\
u_2\\
v
\end{pmatrix}
\]
in \eqref{eqn:ex0}, we have
\begin{align}
\begin{pmatrix}
\dot{u}_1 \\ 
\dot{u}_2\\ 
\dot{v}
\end{pmatrix} =&\begin{pmatrix}
\tilde{a}_{11}\alpha_1+\tilde{b}_{11}\gamma_1
 & 1+\tilde{a}_{12}\alpha_1 & \tilde{a}_{13}\alpha_1+\tilde{b}_{13}\gamma_1 \\ 
\tilde{a}_{21}\alpha_1+\tilde{b}_{21}\gamma_1
 & \tilde{a}_{22}\alpha_1 & \tilde{a}_{23}\alpha_1+\tilde{b}_{23}\gamma_1 \\ 
\tilde{a}_{31}\alpha_1+\tilde{b}_{31}\gamma_1
 & \tilde{a}_{32}\alpha_1 & -(\alpha_0+\delta_0)+\tilde{a}_{33}\alpha_1+\tilde{b}_{33}\gamma_1
\end{pmatrix} 
\begin{pmatrix}
u_1 \\ 
u_2\\ 
v
\end{pmatrix}\notag\\
&+
\begin{pmatrix}
\dfrac{\sigma\delta_0 (u_1+\delta_0 v)^3}{6 (\alpha_0+\delta_0)^2} \\ 
\dfrac{\sigma\alpha_0 (u_1+\delta_0 v)^3}{6 (\alpha_0+\delta_0)}\\ 
-\dfrac{\sigma(u_1+\delta_0 v)^3}{6 (\alpha_0+\delta_0)^2}
\end{pmatrix}-
\epsilon\beta\gamma_0\cos\omega t
\begin{pmatrix}
\tilde{b}_{11} \\ 
\tilde{b}_{21}\\ 
\tilde{b}_{31}
\end{pmatrix},
\label{eqn:ex1}
\end{align}
where
\begin{align*}
&
\tilde{a}_{11}=\frac{\sigma}{(\alpha_0+\delta_0)^2},\quad
\tilde{a}_{12}=\frac{\delta_0}{(\alpha_0+\delta_0)^2},\quad
\tilde{a}_{13}=\frac{\delta_0(\alpha_0^2-\delta_1)}{(\alpha_0+\delta_0)^2},\\
&
\tilde{a}_{21}=-\frac{\sigma}{\alpha_0+\delta_0},\quad
\tilde{a}_{22}=-\frac{\delta_0}{\alpha_0+\delta_0},\quad
\tilde{a}_{23}=-\frac{\delta_0(\alpha_0^2-\delta_1)}{\alpha_0+\delta_0},\\
&
\tilde{a}_{31}=-\frac{\sigma}{\delta_0(\alpha_0+\delta_0)^2},\quad
\tilde{a}_{32}=-\frac{1}{(\alpha_0+\delta_0)^2},\quad
\tilde{a}_{33}=-\frac{\alpha_0^2-\delta_1}{\delta_0(\alpha_0+\delta_0)^2},\\
&
\tilde{b}_{11}=\frac{1}{(\alpha_0+\delta_0)^2},\quad
\tilde{b}_{13}=\frac{\delta_0}{(\alpha_0+\delta_0)^2},\quad
\tilde{b}_{21}=-\frac{1}{\alpha_0+\delta_0},\\
&
\tilde{b}_{23}=-\frac{\delta_0}{\alpha_0+\delta_0},\quad
\tilde{b}_{31}=-\frac{1}{\delta_0(\alpha_0+\delta_0)^2},\quad
\tilde{b}_{33}=-\frac{1}{(\alpha_0+\delta_0)^2}.
\end{align*}
The system \eqref{eqn:ex1} has the form \eqref{eqn:sysuv} with $A=-(\alpha_0+\delta_0)$.
So we obtain \eqref{eqn:esyse1} without the higher-order terms as
\begin{align}
\begin{pmatrix}
\dot{u}_1\\
\dot{u}_2
\end{pmatrix}
=&J\begin{pmatrix}
u_1\\
u_2
\end{pmatrix}+\begin{pmatrix}
\dfrac{\sigma\delta_0 {u_1}^3}{6 (\alpha_0+\delta_0)^2}\\
\dfrac{\sigma\alpha_0 {u_1}^3}{6 (\alpha_0+\delta_0)}
\end{pmatrix}\notag\\
&
+\begin{pmatrix}
 (\tilde{a}_{11}\alpha_1+\tilde{b}_{11}\gamma_1) u_1 +\tilde{a}_{12}\alpha_1 u_2\\
 (\tilde{a}_{21}\alpha_1+\tilde{b}_{21}\gamma_1) u_1+\tilde{a}_{22}\alpha_1 u_2
 \end{pmatrix}
-\epsilon\beta\gamma_0\cos\omega t
\begin{pmatrix}
\tilde{b}_{11}\\
\tilde{b}_{21}
\end{pmatrix}.
\label{eqn:ex2}
\end{align}

We now apply the arguments of Section~2.
The system~\eqref{eqn:ex1} has the form \eqref{eqn:sys}
 with $x=(u_1,u_2)$ and $\mu=(\alpha_1,\gamma_1)$,
 where $f(x;\mu)$ is given by \eqref{eqn:f} with
\begin{align*}
&
a_{130}=\frac{\sigma\delta_0}{(\alpha_0+\delta_0)^2},\quad
a_{230}=\frac{\sigma\alpha_0}{\alpha_0+\delta_0},\quad
a_{jkl}=0,\quad
j=1,2,\quad\mbox{for $(k,l)\neq(3,0)$},\\
&
b_{j1k}=\tilde{a}_{jk},\quad
b_{j12}=\tilde{b}_{j1},\quad
b_{j22}=0,\quad
j,k=1,2,
\end{align*}
and
\[
g(x,\phi)=-\beta\gamma_0\cos\phi
\begin{pmatrix}
\tilde{b}_{11}\\
\tilde{b}_{21}
\end{pmatrix}.
\]
We easily see that conditions~\eqref{eqn:mean} and \eqref{eqn:cd} hold, where
\[
c=\dfrac{\sigma\alpha_0}{6 (\alpha_0+\delta_0)},\quad
d=\dfrac{\sigma\delta_0}{2(\alpha_0+\delta_0)^2}.
\]
Equations~\eqref{eqn:nu} and \eqref{eqn:h} become
\begin{equation}
\begin{split}
&
\nu_1=\frac{9\delta_0^2(\tilde{a}_{21}\alpha_1+\tilde{b}_{21}\gamma_1)}
 {\alpha_0^2(\alpha_0+\delta_0)^2},\quad
\nu_2=\frac{3\delta_0((\tilde{a}_{11}+\tilde{a}_{22})\alpha_1+\tilde{b}_{11}\gamma_1)}
 {\alpha_0 (\alpha_0+\delta_0)} ,\\
&
s_1,s_2=\sign(\sigma),\quad
\bar{\omega}=\frac{3\delta_0\omega}{\alpha_0 (\alpha_0+\delta_0)}
\end{split}
\label{eqn:exnu}
\end{equation}
and
\[
h(\phi)=\frac{9\sqrt{6}\beta\gamma_0\delta_0^3}
 {2\alpha_0^{5/2}(\alpha_0+\delta_0)^{9/2}}\cos\phi,
\]
respectively.
We have $s_1,s_2=1$ or $s_1,s_2=-1$ depending on whether $\sigma=1$ or $-1$, i.e.,
 $\theta_0=0$ or $\pi$, since $\alpha_0,\delta_0>0$.


\subsection{Case of $\theta_0=0$}
We begin with the case of $\theta_0=0$, in which $\sigma=1$ and $s_1,s_2=1$.
Let $\nu_1=-\hat{\epsilon}^2$
 and define $\hat{\omega},\Delta$ as in \eqref{eqn:hat}.
We compute \eqref{eqn:hath} for \eqref{eqn:ho1} as
\begin{align*}
\hat{h}(\phi)
 =&\pm\frac{9\sqrt{3}\beta\gamma_0\delta_0^3}
 {2\alpha_0^{5/2}(\alpha_0+\delta_0)^{9/2}}
 \int_{-\infty}^\infty\sech^2\left(\frac{t}{\sqrt{2}}\right)\,\cos(\hat{\omega} t+\phi)\d t\\
 =& \pm\frac{9\sqrt{3}\pi\beta\gamma_0\delta_0^3\hat{\omega}}
 {\alpha_0^{5/2}(\alpha_0+\delta_0)^{9/2}}
 \csch\left(\frac{\pi\hat{\omega}}{\sqrt{2}}\right)\cos\phi
\end{align*}
and \eqref{eqn:hathmn} for \eqref{eqn:po1} as
\begin{align*}
\hat{h}^{m/n}(\phi)
=& \frac{9\sqrt{3}k\beta\gamma_0\delta_0^3}
 {\alpha_0^{5/2}(\alpha_0+\delta_0)^{9/2}(k^2+1)}\\
&\times
\int_0^{m\hat{T}}
 \cn\left(\frac{t}{\sqrt{k^2+1}}\right)\dn\left(\frac{t}{\sqrt{k^2+1}}\right)
 \cos(\hat{\omega} t+\phi)\d t\\[1ex]
=&
\begin{cases}
\displaystyle
\frac{18\sqrt{3}\pi\beta\gamma_0\delta_0^3\hat{\omega}}
 {\alpha_0^{5/2}(\alpha_0+\delta_0)^{9/2}}
 \csch\left(\frac{m\pi K(k')}{2K(k)}\right)\cos\phi & \mbox{if $m$ is odd and $n=1$};\\[3ex]
0 & \mbox{otherwise,}
\end{cases}
\end{align*}
where the resonance relation $n T^k=m\hat{T}=2m\pi/\hat{\omega}$ has been used.
Note that by \eqref{eqn:exag0} $\gamma_0<0$ since $\alpha_0,\delta_0>0$.
Using Theorems~\ref{thm:3a} and \ref{thm:3b} and Remark~\ref{rmk:3b},
 we obtain the following.

\begin{prop}
\label{prop:6a}
Let $\theta_0=0$.
There exist two periodic orbits near $z=(\pm\sqrt{-\nu_1},0,0)$ in \eqref{eqn:ex} with \eqref{eqn:td}
 such that the left branch of the stable $($resp. unstable$)$ manifold of the right periodic orbit
 intersect the right branch of the unstable $($resp. stable$)$ manifold of the left periodic orbit
 transversely
 if for $|\nu_1|,|\nu_2|,\epsilon>0$ sufficiently small
 condition~\eqref{eqn:hcon1a} holds with
\[
\hat{h}_\mathrm{max}=-\frac{9\sqrt{3}\pi\beta\gamma_0\delta_0^3\hat{\omega}}
 {\alpha_0^{5/2}(\alpha_0+\delta_0)^{9/2}}
 \csch\left(\frac{\pi\hat{\omega}}{\sqrt{2}}\right),\quad
\hat{h}_\mathrm{min}=-\hat{h}_\mathrm{max}.
\]
Moreover, heteroclinic bifurcations occur
 near the two curves given by \eqref{eqn:thm3a1} or equivalently by \eqref{eqn:thm3a2}.
\end{prop}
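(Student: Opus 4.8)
The plan is to deduce Proposition~\ref{prop:6a} from Theorem~\ref{thm:3a} applied to the reduced system of the previous subsection, after making the heteroclinic Melnikov data explicit in the case $\theta_0=0$. When $\theta_0=0$ one has $\sigma=1$, so the reduction of Section~6.1 turns \eqref{eqn:ex} with \eqref{eqn:td} into the normal form \eqref{eqn:pnf} with $s_1=s_2=1$ and with $\nu_1,\nu_2,\bar{\omega}$ and $h(\phi)$ given by \eqref{eqn:exnu} and the accompanying display, $h(\phi)$ being a scalar multiple of $\cos\phi$. Restricting to $|\nu_1|,|\nu_2|,\epsilon$ small with $\nu_1<0$, $\nu_2=O(\nu_1)$ and $\epsilon=O(\nu_1^2)$ places us in the setting of Section~\ref{section3}, so the rescaled system \eqref{eqn:psys1} and its heteroclinic Melnikov functions $M_\pm(\phi;\hat{\nu})$ are available.

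First I would check the standing hypothesis $h(\phi)\not\equiv0$ of Lemma~\ref{lem:3a} and Theorem~\ref{thm:3a}: by \eqref{eqn:exag0} one has $\gamma_0=\sigma(\sigma+\delta_1)/\delta_0<0$ because $\alpha_0,\delta_0>0$ and $\delta_1\neq\pm1$, so the multiple of $\cos\phi$ defining $h$ is nonzero. Next I would evaluate $\hat{h}(\phi)$ from \eqref{eqn:hath} using the heteroclinic orbit \eqref{eqn:ho1}, either by inserting the two-term Fourier series of $h$ and applying the transform identity \eqref{eqn:lem3ap}, or directly by residues for $\int_{-\infty}^\infty\sech^2(t/\sqrt{2})\cos(\hat{\omega}t+\phi)\,\d t$; both routes give $\hat{h}(\phi)=C\cos\phi$ with
\[
C=-\frac{9\sqrt{3}\,\pi\beta\gamma_0\delta_0^3\hat{\omega}}{\alpha_0^{5/2}(\alpha_0+\delta_0)^{9/2}}\csch\!\left(\frac{\pi\hat{\omega}}{\sqrt{2}}\right)>0,
\]
the positivity following from $\gamma_0<0$ and $\csch(\pi\hat{\omega}/\sqrt{2})>0$ for $\hat{\omega}>0$. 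Since $\cos\phi$ ranges over $[-1,1]$ on $\Sset^1$, this gives $\hat{h}_{\mathrm{max}}=C$ and $\hat{h}_{\mathrm{min}}=-C=-\hat{h}_{\mathrm{max}}$, which are exactly the values asserted in the proposition.

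With $\hat{h}_{\mathrm{max}},\hat{h}_{\mathrm{min}}$ thus identified, Theorem~\ref{thm:3a} applies directly: if \eqref{eqn:hcon1a} holds for $|\nu_1|,|\nu_2|,\epsilon>0$ sufficiently small, then the left branch of the stable manifold of the hyperbolic periodic orbit of \eqref{eqn:psys1} near $\zeta=(1,0)$ meets the right branch of the unstable manifold of the one near $\zeta=(-1,0)$ topologically transversely (and symmetrically when \eqref{eqn:hcon1b} holds), and heteroclinic bifurcations occur near the curves \eqref{eqn:thm3a1} and \eqref{eqn:thm3a2}. To state this for \eqref{eqn:ex} I would transport the two periodic orbits back through the scaling \eqref{eqn:sc1}, under which $\zeta=(\pm1,0)$ corresponds to $y\approx(\pm\sqrt{-\nu_1},0)$, i.e.\ to the two nontrivial equilibria produced by the pitchfork of \eqref{eqn:pnf} at $\epsilon=0$, and then through the near-identity and linear transformations of Sections~2 and 6.1, under which $y_1$ is to leading order a positive multiple of $z_1=\tilde{z}_1$; hence the orbits lie near $z=(\pm\sqrt{-\nu_1},0,0)$ as claimed. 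Finally, because $\hat{h}_{\mathrm{min}}=-\hat{h}_{\mathrm{max}}$, the two lines of \eqref{eqn:thm3a1} coincide with the two lines of \eqref{eqn:thm3a2}, so the four curves of Theorem~\ref{thm:3a} collapse to the two curves named in the proposition, which may be written using either \eqref{eqn:thm3a1} or \eqref{eqn:thm3a2}.

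I do not expect a genuine obstacle here: the evaluation of $\hat{h}$ is a single contour integration, and the only delicate points are the sign of $C$, which hinges on $\gamma_0<0$, and the bookkeeping that carries the transverse-intersection conclusion from \eqref{eqn:psys1} back through the center-manifold reduction and the normalizing transformations to the coordinates $z$, confirming in particular that the two detected periodic orbits are the perturbations of the pitchfork branches $y_1=\pm\sqrt{-\nu_1}$.
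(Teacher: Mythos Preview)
Your proposal is correct and follows essentially the same route as the paper: compute $\hat{h}(\phi)$ from \eqref{eqn:hath} with the explicit $h(\phi)$ of Section~6.1, read off $\hat{h}_{\mathrm{max}}=-\hat{h}_{\mathrm{min}}$, and invoke Theorem~\ref{thm:3a}. The paper does exactly this, presenting the evaluation of $\hat{h}(\phi)$ just before the proposition and then stating that Propositions~\ref{prop:6a} and \ref{prop:6b} follow from Theorems~\ref{thm:3a} and \ref{thm:3b}; your additional remarks that $\hat{h}_{\mathrm{min}}=-\hat{h}_{\mathrm{max}}$ forces \eqref{eqn:hcon1a} and \eqref{eqn:hcon1b} to coincide and collapses \eqref{eqn:thm3a1}, \eqref{eqn:thm3a2} to two curves make explicit what the paper leaves implicit.
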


\begin{prop}
\label{prop:6b}
Let $\theta_0=0$ and let $m>0$ be an odd integer.
Then saddle-node bifurcations of $2m\pi/\omega$-periodic orbits around the origin occur
 in \eqref{eqn:ex} with \eqref{eqn:td} near the two curves given by \eqref{eqn:thm3b} with
\[
\hat{h}_\mathrm{max}^{m/1}=-\frac{18\sqrt{3}\pi\beta\gamma_0\delta_0^3\hat{\omega}}
 {\alpha_0^{5/2}(\alpha_0+\delta_0)^{9/2}}
 \csch\left(\frac{m\pi K(k')}{2K(k)}\right),\quad
\hat{h}_\mathrm{min}^{m/1}=-\hat{h}_\mathrm{max}^{m/1}.
\]
Moreover, one of the periodic orbits born at the bifurcations
 is of a sink or saddle type, depending on whether
\begin{equation}
\hat{h}_\mathrm{max}^{m/1},\hat{h}_\mathrm{min}^{m/1}
>-\frac{1}{\Delta}\biggl(\frac{J_1(k,1)J_3(k,1)}{m\hat{T}}-J_2(k,1)\biggr)
\label{eqn:prop6b}
\end{equation}
or the opposite of inequality holds, while the other is always a saddle.
\end{prop}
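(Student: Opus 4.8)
The plan is to read off the two-dimensional reduced system from the center manifold reduction of Section~6.1 and then apply Theorem~\ref{thm:3b} together with Remark~\ref{rmk:3b}. For $\theta_0=0$ one has $\sigma=1$, hence $s_1=s_2=1$, and Section~6.1 shows that \eqref{eqn:ex} with \eqref{eqn:td} is governed on $\N_\epsilon$ by \eqref{eqn:pnf} with $\nu_1,\nu_2,\bar\omega$ as in \eqref{eqn:exnu} and $h(\phi)=\dfrac{9\sqrt6\,\beta\gamma_0\delta_0^3}{2\alpha_0^{5/2}(\alpha_0+\delta_0)^{9/2}}\cos\phi$, so the subharmonic Melnikov analysis of Section~3.3 applies verbatim. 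First I would put $\nu_1=-\hat\epsilon^2<0$, define $\hat\omega,\Delta$ by \eqref{eqn:hat}, and use the rescaling \eqref{eqn:sc1} to bring the reduced system to the form \eqref{eqn:psys1}, whose unperturbed part has the family \eqref{eqn:po1} of periodic orbits encircling the origin.

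Next I would evaluate $\hat h^{m/n}(\phi)$ from \eqref{eqn:hathmn}, with $\zeta_2^k(t)$ taken from \eqref{eqn:po1} and $h$ as above, and verify that $\hat h^{m/1}\not\equiv0$. Expanding $\cn(t/\sqrt{k^2+1})\,\dn(t/\sqrt{k^2+1})$ in its Fourier series --- which carries only odd multiples of the fundamental frequency $\pi/(2K(k)\sqrt{k^2+1})$ --- and imposing the resonance relation $nT^k=m\hat T=2m\pi/\hat\omega$, one finds that the integral vanishes unless $n=1$ and $m$ is odd, and otherwise equals the closed form already displayed just before the statement; that is, for odd $m$, $\hat h^{m/1}(\phi)=A_m\cos\phi$ with $A_m=\dfrac{18\sqrt3\,\pi\beta\gamma_0\delta_0^3\hat\omega}{\alpha_0^{5/2}(\alpha_0+\delta_0)^{9/2}}\csch\!\left(\dfrac{m\pi K(k')}{2K(k)}\right)$. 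By \eqref{eqn:exag0} and $\alpha_0,\delta_0>0$ one has $\gamma_0<0$, while $\gamma_0\neq0$ since $\delta_1\neq-1$; hence $A_m\neq0$, so $\hat h^{m/1}\not\equiv0$ with $\hat h_{\mathrm{max}}^{m/1}=-A_m>0$ and $\hat h_{\mathrm{min}}^{m/1}=A_m=-\hat h_{\mathrm{max}}^{m/1}$, which are the expressions in the statement.

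Then I would invoke Theorem~\ref{thm:3b}: since $\hat h^{m/1}\not\equiv0$, for $|\nu_1|,|\nu_2|,\epsilon>0$ sufficiently small, saddle-node bifurcations of $m\hat T$-periodic orbits of \eqref{eqn:psys1} occur near the two curves \eqref{eqn:thm3b}, where $k\in(0,1)$ is the modulus with $T^k=m\hat T$; condition \eqref{eqn:thm3bp} holds automatically because the extrema of $A_m\cos\phi$ are nondegenerate. Undoing the time rescalings (those of Section~2 leading to \eqref{eqn:pnf} and the rescaling $t\mapsto\sqrt{-\nu_1}\,t$ of Section~3.1), an $m\hat T$-periodic orbit of \eqref{eqn:psys1} is an $mT=2m\pi/\omega$-periodic orbit of the reduced system in the original time, and by the reduction of Section~6.1 and the invariant manifold theory of Section~5 it lifts to a $2m\pi/\omega$-periodic orbit of \eqref{eqn:ex} with \eqref{eqn:td} near $z=(0,0,0)$. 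For the stability statement I would apply Remark~\ref{rmk:3b}, by which one of the two newborn orbits at each such bifurcation is of sink (resp. source) type and the other a saddle exactly when $L^{m/1}=m\hat\nu\hat T+J_3(k,1)$ is negative (resp. positive); substituting the bifurcation values $\hat\nu=-\nu_2/\nu_1$ read off from \eqref{eqn:thm3b} and rearranging with the help of \eqref{eqn:rmk3b1}, using that $J_1(k,1),J_3(k,1),m\hat T$ and $\Delta$ are all positive, reduces $L^{m/1}<0$ at both curves to the single inequality \eqref{eqn:prop6b}, and its reversal to $L^{m/1}>0$.

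The only step carrying genuine analytic content is the Fourier/resonance evaluation of $\hat h^{m/n}$ that isolates the case $m$ odd, $n=1$, and this is precisely the computation already performed in the text preceding the proposition; the rest is bookkeeping, the main care being to track the chain of rescalings so that an ``$m\hat T$-periodic orbit in the $\zeta$-variables'' is read correctly as a ``$2m\pi/\omega$-periodic orbit in the original variables'', and to turn the abstract sign condition of Remark~\ref{rmk:3b} into the explicit inequality \eqref{eqn:prop6b}.
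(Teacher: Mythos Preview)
Your proposal is correct and follows essentially the same route as the paper: the paper simply states, just before Propositions~\ref{prop:6a} and \ref{prop:6b}, that these results are obtained ``Using Theorems~\ref{thm:3a} and \ref{thm:3b} and Remark~\ref{rmk:3b}'' applied to the explicit $\hat h^{m/n}(\phi)$ computed a few lines earlier, and your write-up spells out exactly that application, including the Fourier/resonance argument isolating $n=1$, $m$ odd, the observation $\gamma_0<0$ forcing $A_m<0$, and the substitution of the bifurcation value of $\hat\nu$ into $L^{m/1}$ to obtain \eqref{eqn:prop6b}.
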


\begin{figure}[t]
\begin{center}
\includegraphics[scale=0.5]{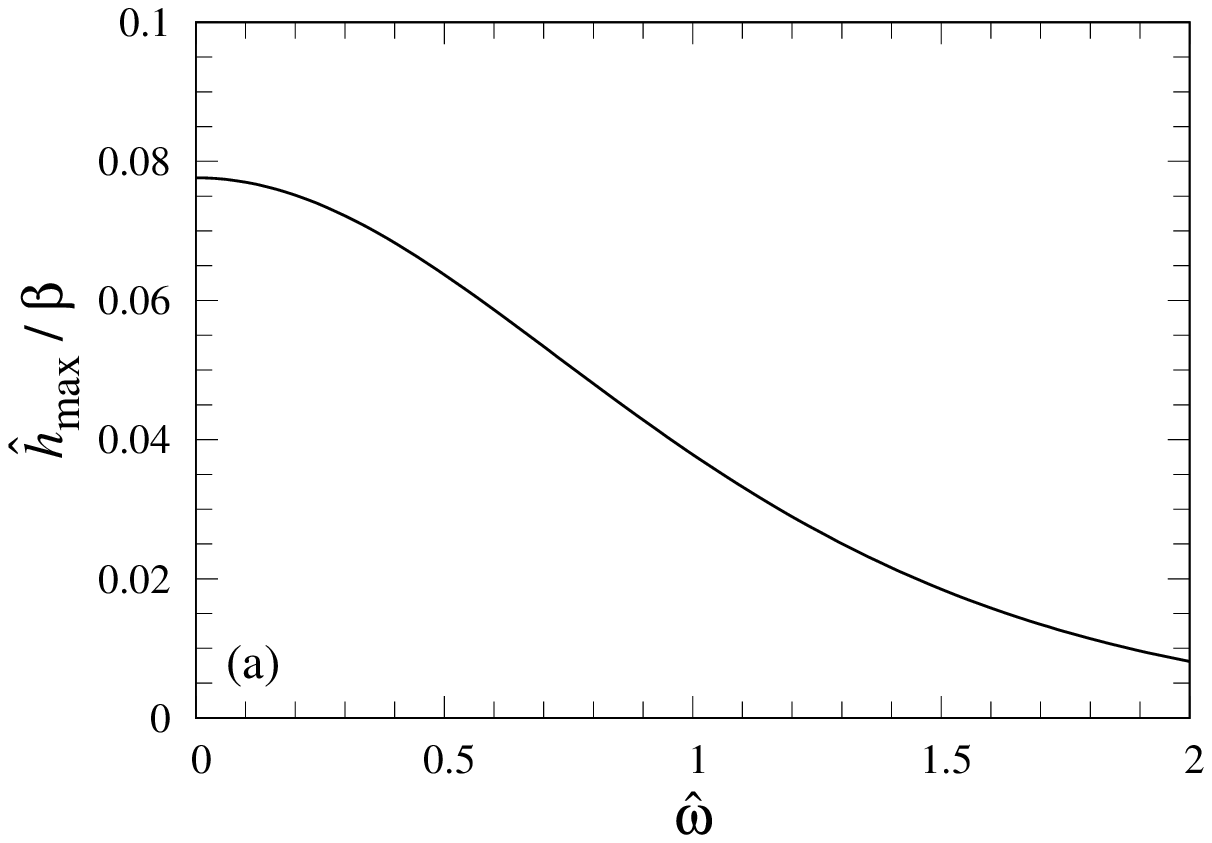}\quad
\includegraphics[scale=0.5]{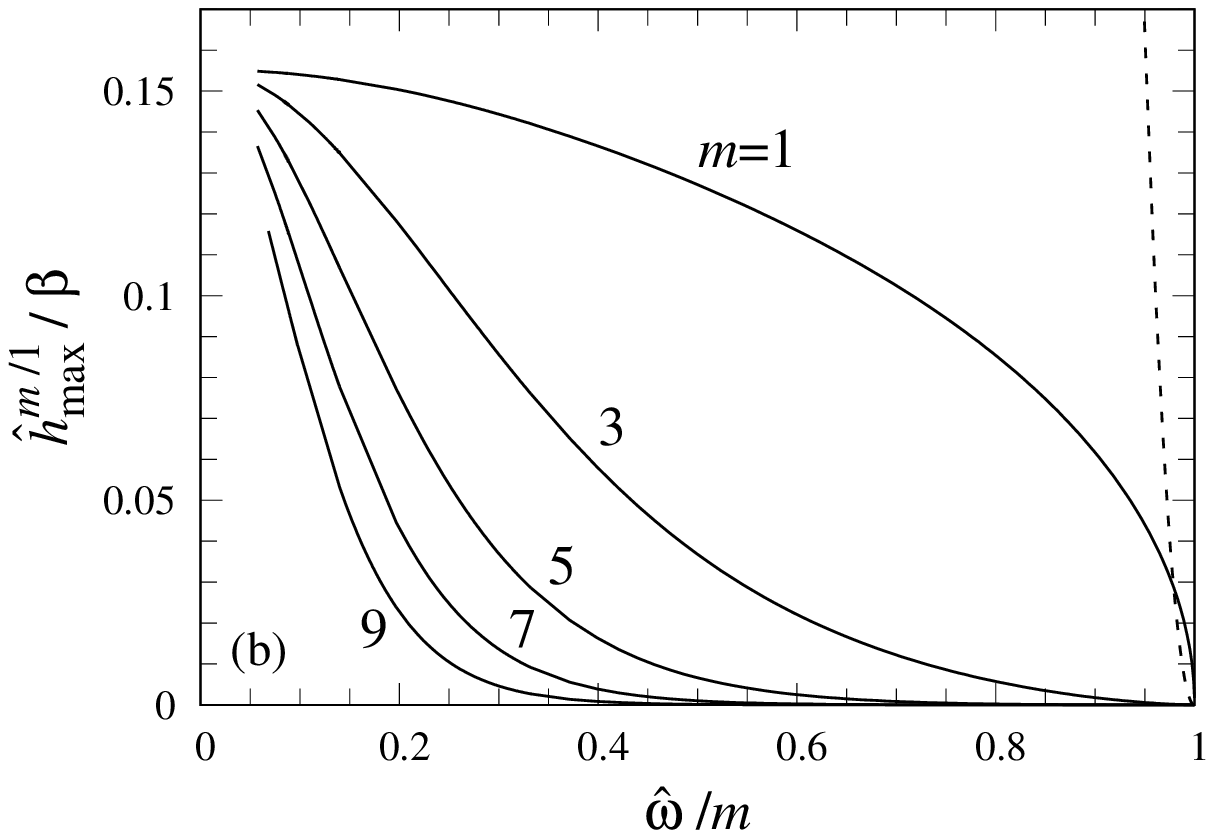}
\end{center}
\caption{Dependence of $\hat{h}_{\mathrm max}$ and $\hat{h}_{\mathrm max}^{m/1}$
 on $\hat{\omega}$ for $\alpha_0=1$, $\gamma_0=-1$ and $\delta_0=0.2$
 in Propositions~\ref{prop:6a} and \ref{prop:6b}:
(a) $\hat{h}_{\mathrm max}$;
(b) $\hat{h}_{\mathrm max}^{m/1}$ for $m=1,3,5,7,9$.
Condition~\eqref{eqn:prop6b} holds for $\hat{h}_{\mathrm{min}}$
 below the broken line in Fig.~(b).}
\label{fig:6a}
\end{figure}

The dependence of $\hat{h}_{\mathrm max}$ and $\hat{h}_{\mathrm max}^{m/1}$
 on $\hat{\omega}$ when $\alpha_0=1$, $\gamma_0=-1$, $\delta_0=0.2$ and $\Delta=1$
 is shown for $m=1,3,5,7,9$ in Fig.~\ref{fig:6a}.
Condition~\eqref{eqn:prop6b} holds for $\hat{h}_{\mathrm{min}}$,
 i.e., one of the periodic orbits born at the saddle-node bifurcation
 given by the second equation of \eqref{eqn:thm3b} is of a sink type,
 below the broken line in Fig.~\ref{fig:6a}(b),
 while it always holds for $\hat{h}_{\mathrm{max}}$.

Henceforth we fix $\delta_0=0.2$, $\delta_1=-1.2$, $\beta=0.5$, $\Delta=1$,
 and $\hat{\omega}=0.8$ or $1.4$, so that $\alpha_0=1$ and $\gamma_0=-1$,
 and give numerical results to demonstrate the above theoretical ones.
The values of $\epsilon$, $\hat{\epsilon}$ and $\omega$ are computed
 from \eqref{eqn:hat}, \eqref{eqn:exnu} and the relation $\nu_1=-\hat{\epsilon}^2$
 when $\alpha$ and $\gamma$ are changed.
Some numerical computations obtained by the computer software \texttt{AUTO} \cite{DO12},
 in which the approaches are described in Appendix~A, are also provided.
 
\begin{figure}[t]
\begin{center}
\includegraphics[scale=0.55]{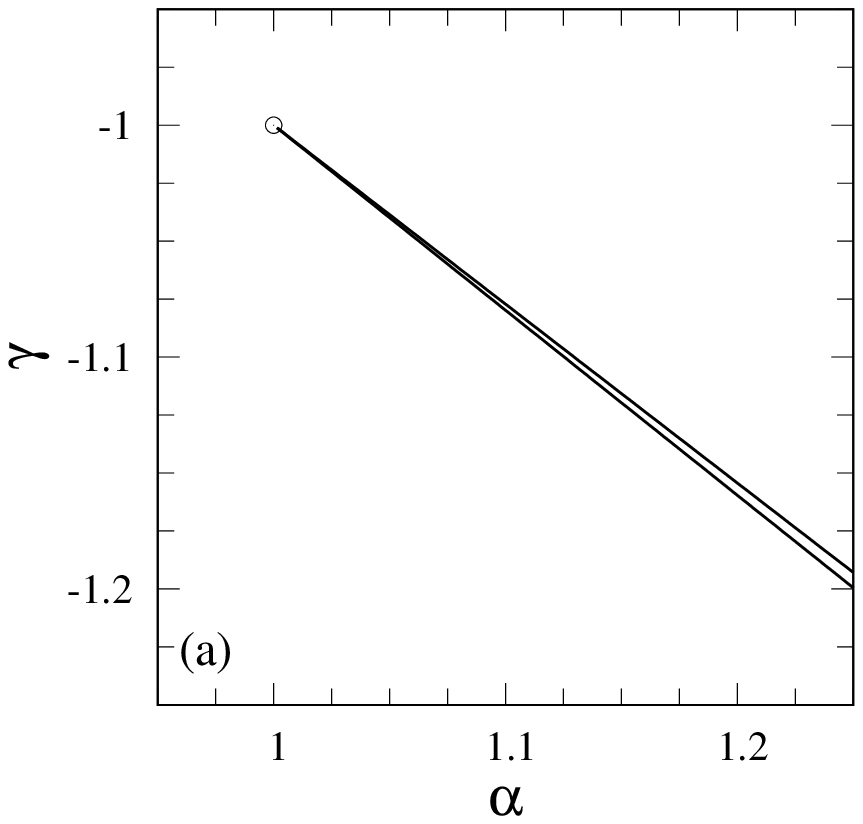}\quad
\includegraphics[scale=0.55]{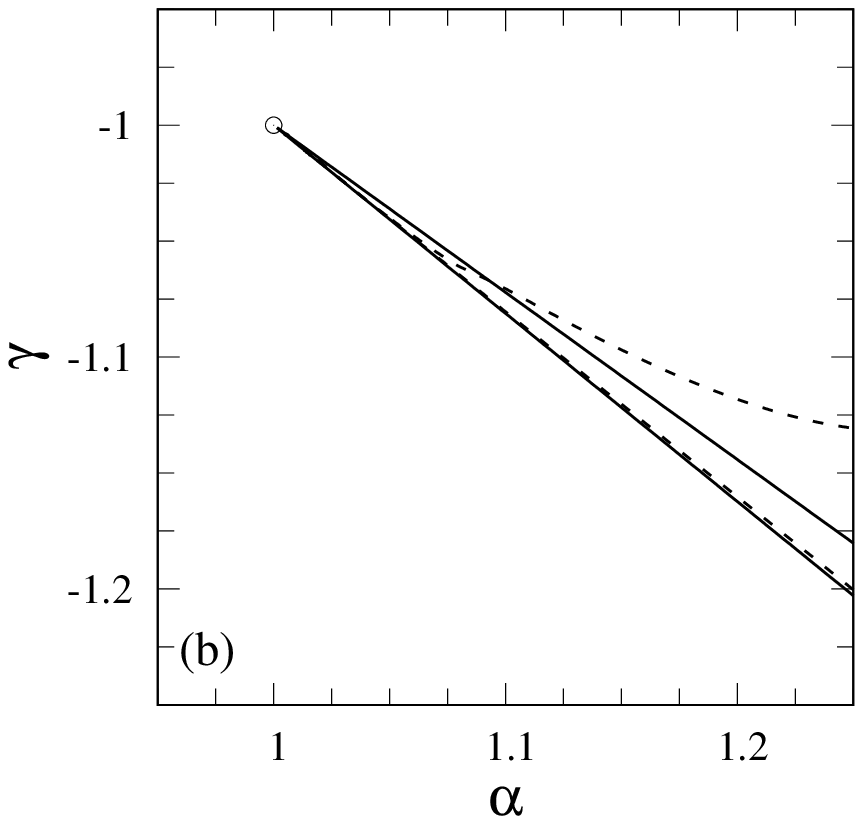}
\end{center}
\caption{Bifurcation sets in \eqref{eqn:ex} with \eqref{eqn:td}
 for $\theta_0=0$, $\delta_0=0.2$, $\delta_1=-1.2$,
 $\beta=5$ and $\hat{\omega}=0.8$:
(a) Heteroclinic bifurcations for $\hat{\omega}=1.4$;
(b) saddle-node bifurcations for $\hat{\omega}=0.8$ and $m=1$.
The circle represents the codimension-two bifurcation point.
The solid and broken lines, respectively, represent the theoretical predictions
 and numerical results in Fig.~(b).}
\label{fig:6b}
\end{figure}
 
Figure~\ref{fig:6b} shows saddle-node bifurcation sets of harmonic orbits ($m,n=1$)
 as well as heteroclinic bifurcation sets,
 detected by Propositions~\ref{prop:6a} and \ref{prop:6b}.
In the regions between the two curves in Figs.~\ref{fig:6b}(a) and (b), respectively,
 there exist transversely heteroclinic and harmonic orbits
 near the unperturbed heteroclinic and periodic orbits with $\epsilon=0$
 (see Figs.~12(c) and 14(a) of \cite{Y99}).
In Fig.~\ref{fig:6b}(b),
 numerical computations by \texttt{AUTO} are also plotted as broken lines
 and found to agree well with the theoretical prediction,
 especially near the codimension-two bifurcation point $(\alpha,\gamma)=(1,-1)$.

\begin{figure}[t]
\begin{center}
\includegraphics[scale=0.51]{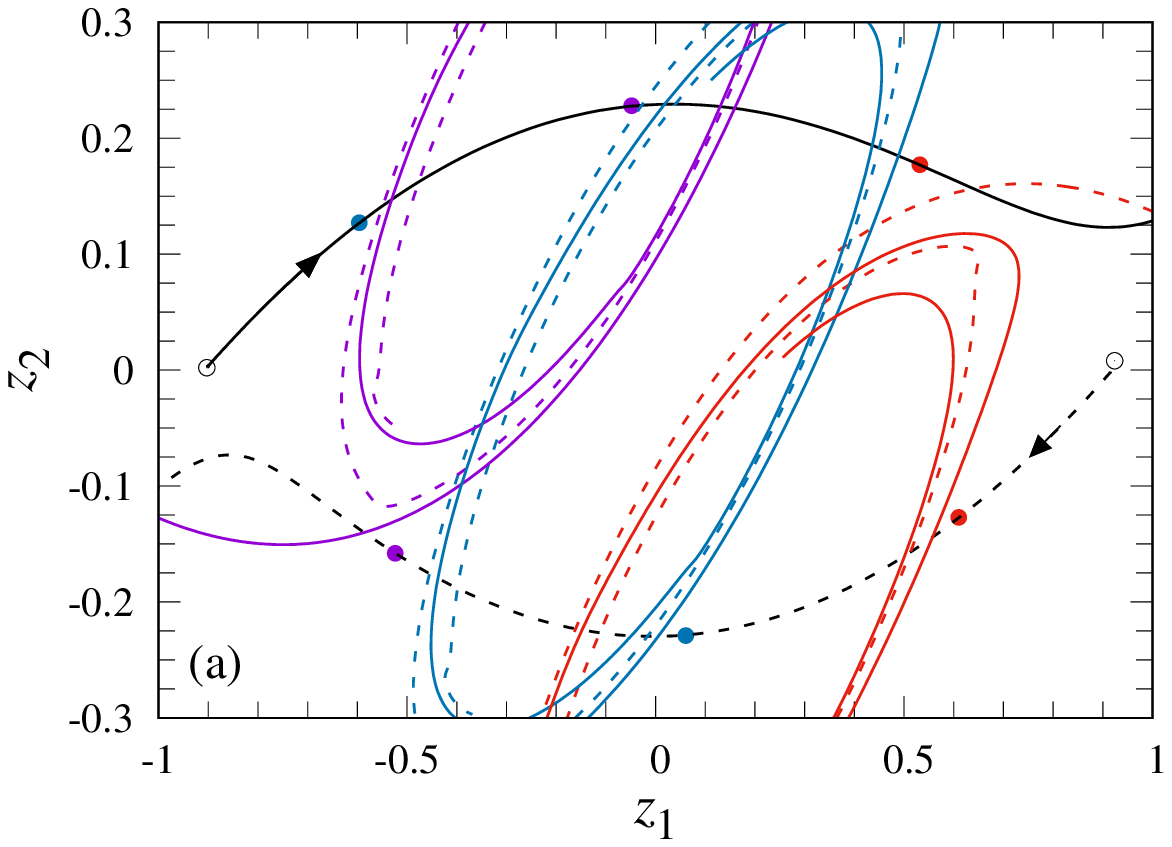}\quad
\includegraphics[scale=0.51]{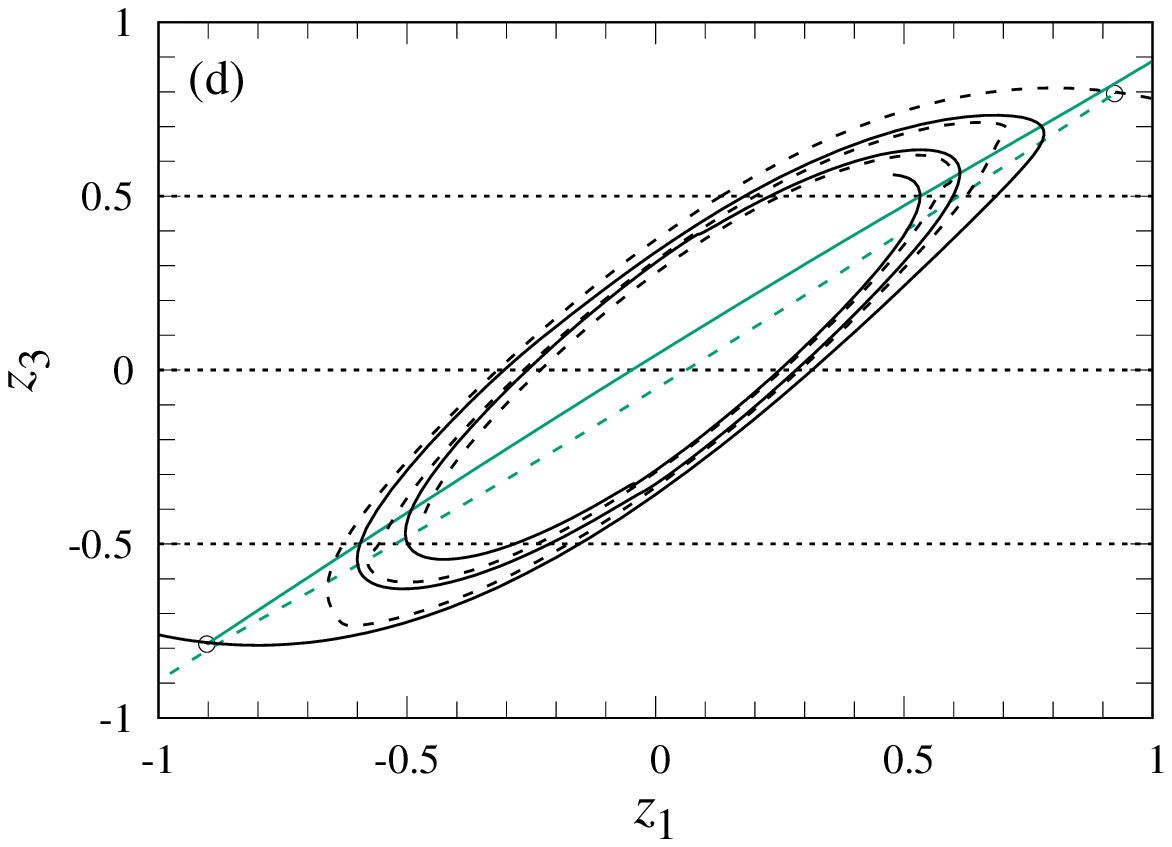}\\[2ex]
\includegraphics[scale=0.51]{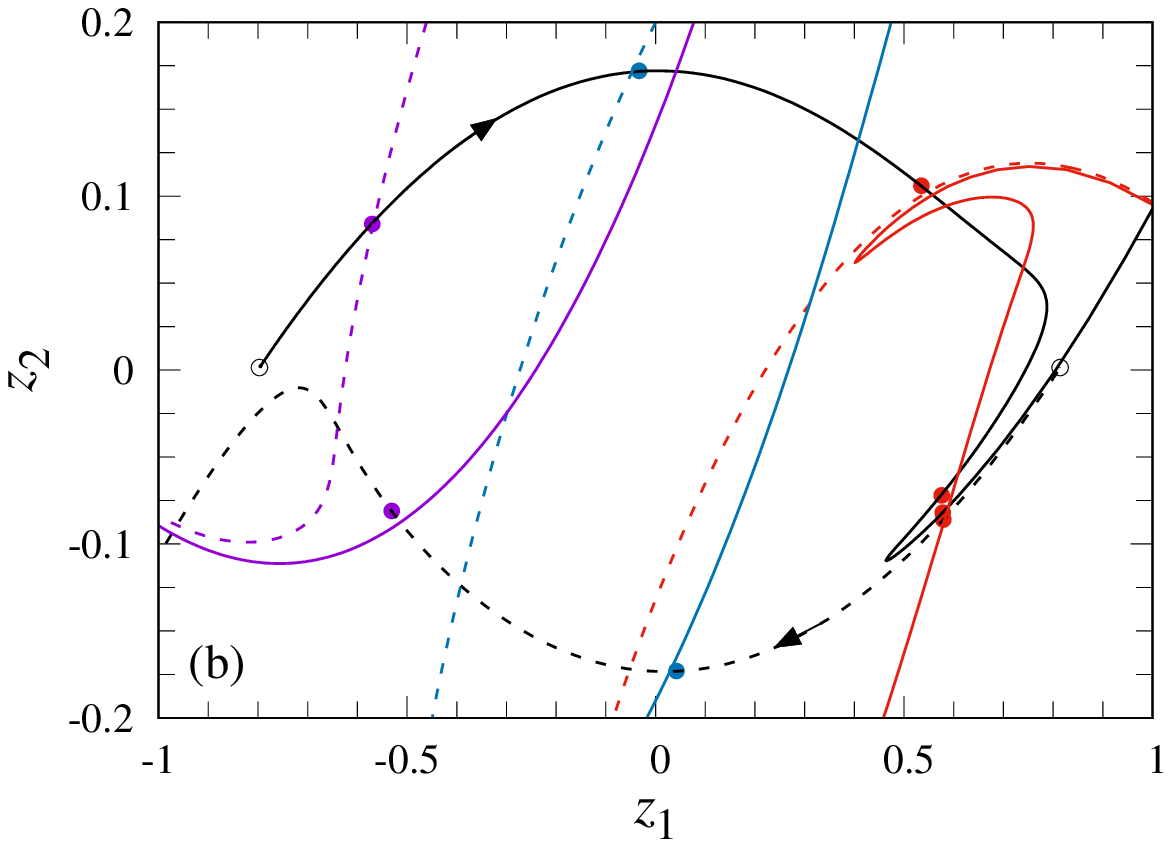}\quad
\includegraphics[scale=0.51]{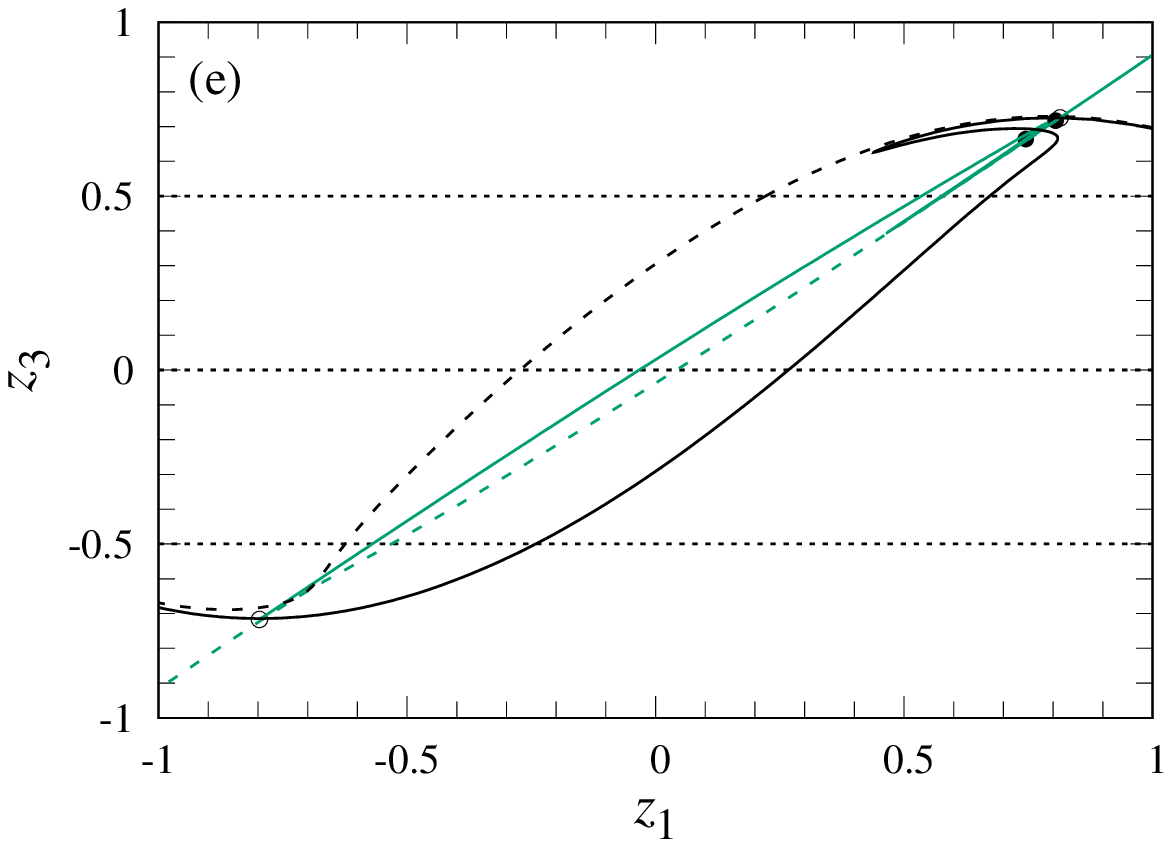}
\end{center}
\caption{Stable and unstable manifolds of periodic orbits
 in \eqref{eqn:ex} with \eqref{eqn:td} on the Poincar\'e section $\{t=0\mod 2\pi/\omega\}$
 for $\alpha=1.5$, $\theta_0=0$, $\delta_0=0.2$, $\delta_1=-1.2$,
 $\beta=5$ and $\hat{\omega}=1.4$:
(a) and (d) $\gamma=-1.1$;
(b) and (e) $\gamma=-1.342915$;
(c) and (f) $\gamma=-1.4$.
The circle represents the periodic orbit.
In Figs.~(a)-(c),
 the black line represents the projection of the unstable manifold onto the $(z_1,z_2)$-plane,
 the small purple, blue and red disks, respectively,  represent
 the intersections of the unstable manifold with the planes $z_3=-0.5$, $0$ and $0.5$,
 and the purple, blue and red lines, respectively, represent
 the intersections of the stable manifolds
 with the planes $z_3=-0.5$, $0$ and $0.5$ onto  the $(z_1,z_2)$-plane.
In Figs.~(d)-(f),
 the black lines represent the intersections of the stable manifold with the plane $z_2=0$,
 the green line represents the projection of the unstable manifold onto the $(z_1,z_3)$-plane,
 and the small black disks represent
 the intersections of the unstable manifold with the planes $z_2=0$. 
The solid and dashed lines represent these intersections or projections
 for the left and right periodic orbits, respectively.
}
\label{fig:6c}
\end{figure}

Figure~\ref{fig:6c} shows the numerically computed stable and unstable manifolds
 of periodic orbits near the unperturbed saddles with $\epsilon=0$,
 the behavior of which is described in Proposition~\ref{prop:6a},
 for $\alpha=1.5$, $\hat{\omega}=1.4$ and $\gamma=-1.1$, $-1.342915$ or $-1.4$.
It was numerically observed in Fig.~14(a) of \cite{Y99}
 that there exist a pair of heteroclinic orbits in \eqref{eqn:ex} with $\theta_\d=0$
 (i.e., $\epsilon=0$) for $\gamma\approx-1.342915$.
We see that these manifolds intersect transversely in Fig.~\ref{fig:6c}(b)
 while they do not in Figs.~\ref{fig:6c}(a) and (c).
Their behavior in Figs.~\ref{fig:6c}(a), (b) and (c) is, respectively, 
 similar to plates~\textcircled{\scriptsize 1},  \textcircled{\scriptsize 3}
 and \textcircled{\scriptsize 5} of Fig.~\ref{fig:thm3a}(c).

 \setcounter{figure}{14}
\begin{figure}[t]
\begin{center}
\includegraphics[scale=0.51]{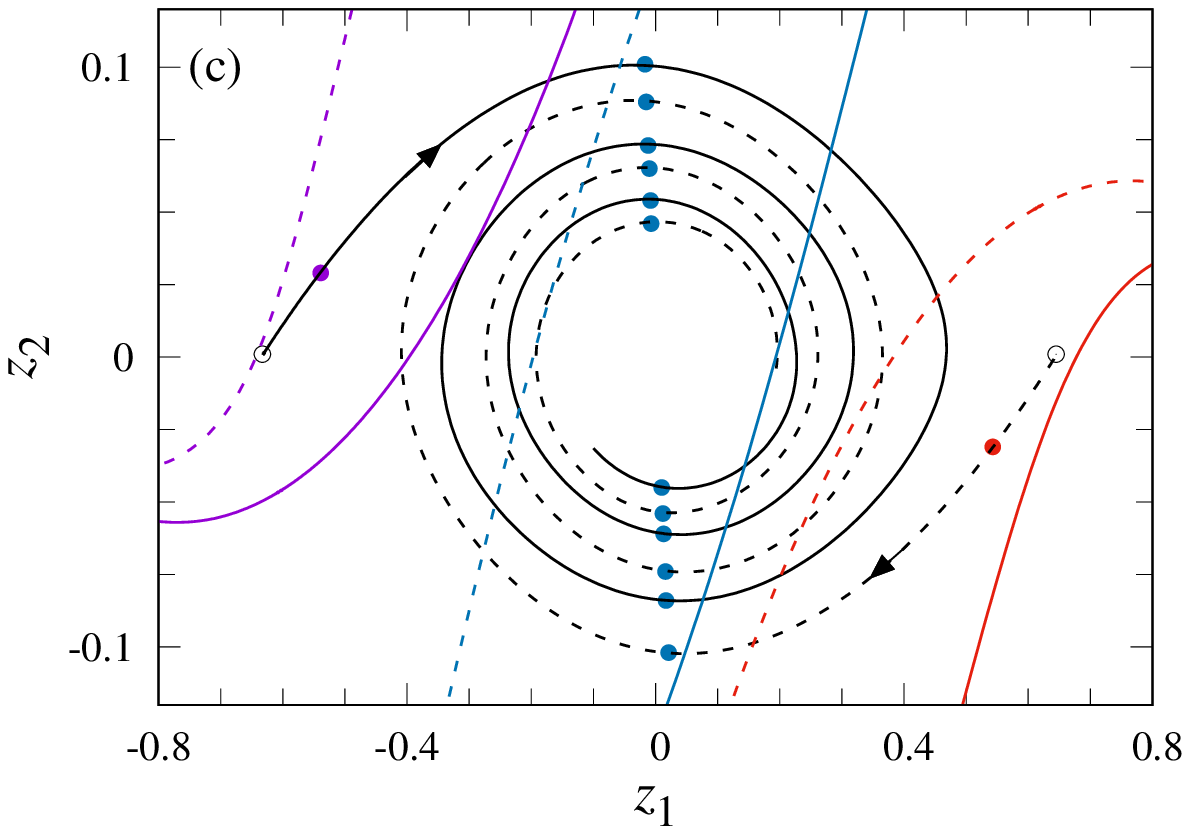}\quad
\includegraphics[scale=0.51]{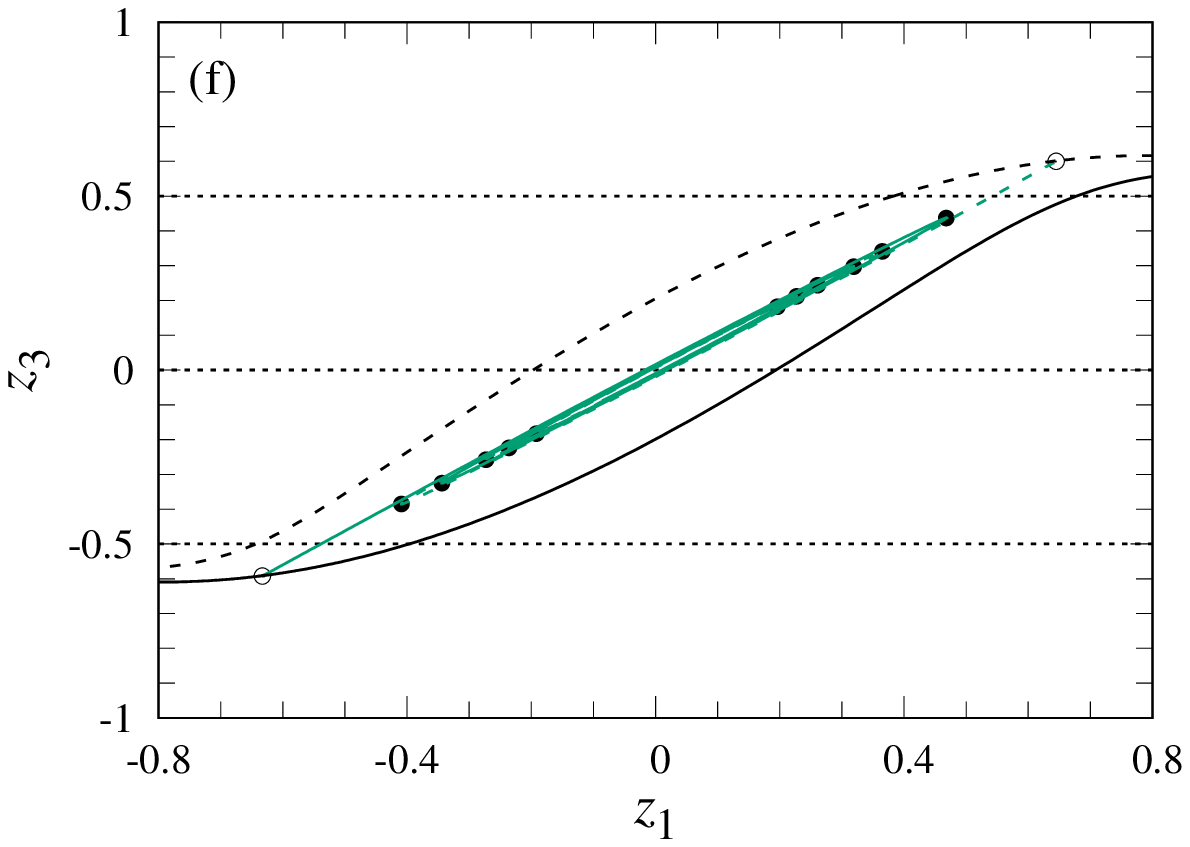}
\end{center}
\caption{Continued.}
\end{figure}

\begin{figure}[t]
\begin{center}
\includegraphics[scale=0.55]{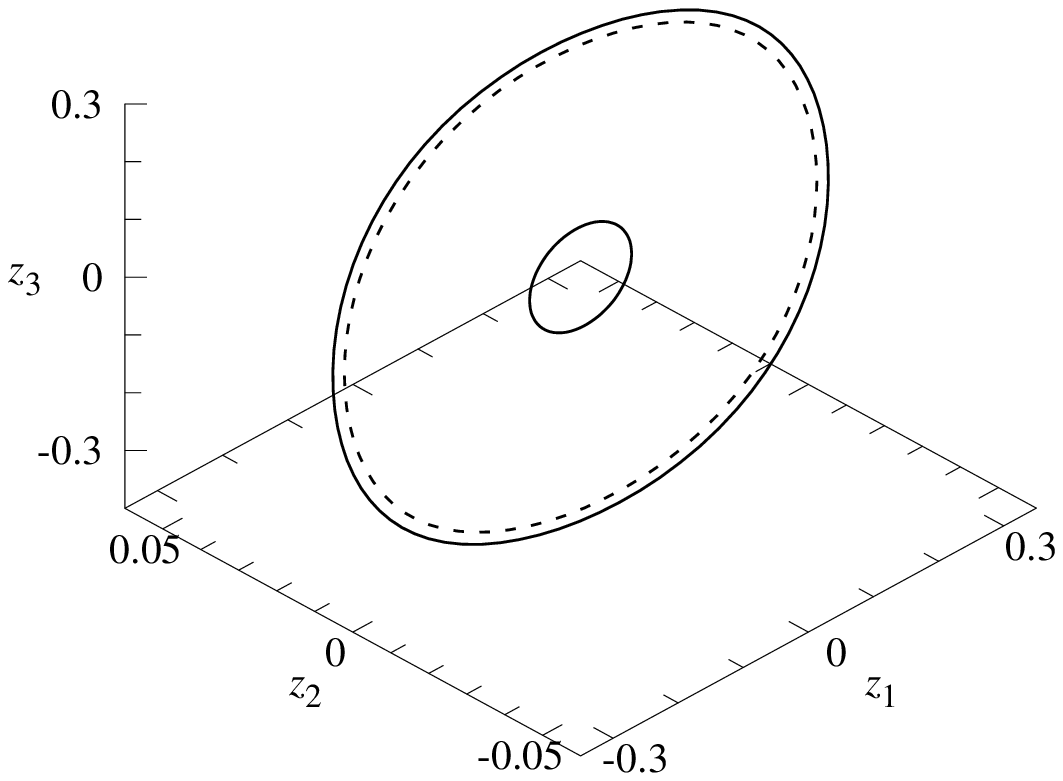}
\end{center}
\caption{Harmonic orbits ($m,n=1$) in \eqref{eqn:ex} with \eqref{eqn:td}
 for $\alpha=1.25$, $\gamma=-1.2$, $\theta_0=0$, $\delta_0=0.2$, $\delta_1=-1.2$,
 $\beta=5$ and $\hat{\omega}=0.8$.
The solid and broken lines, respectively, represent stable and unstable ones.}
\label{fig:6d}
\end{figure}

Figure~\ref{fig:6d} shows numerically computed periodic orbits of period $2\pi/\omega$
 for $\alpha=1.25$ and $\gamma=-1.2$.
The two large ones are born at the saddle-node bifurcation
 detected by Proposition~\ref{prop:6b}:
One of them is stable and the other is unstable, as predicted there.
The small one, which is continued from the equilibrium at the origin
 with $\epsilon$ from $\epsilon=0$,
 always exists near it when $\epsilon>0$ is sufficiently small.

The values of $\alpha,\gamma$ used in Figs.~\ref{fig:6b}-\ref{fig:6d} are not necessarily
 close to the codimension-two bifurcation point $(\alpha,\gamma)=(1,-1)$
 and they are rather far from it in some cases.
Thus, our theoretical results are still valid even in such a situation
 although it does not satisfy our assumption in the theory.

\subsection{Case of $\theta_0=\pi$}
We turn to the case of $\theta_0=\pi$, in which $s_1,s_2=-1$.
We first assume that $\nu_1<0$.
Let $\nu_1=-\hat{\epsilon}^2$ and define $\hat{\omega},\Delta$ as in \eqref{eqn:hat}.
We compute \eqref{eqn:hathmn} for \eqref{eqn:po2a} as
\begin{align*}
\hat{h}^{m/n}(\phi)
=& -\frac{9\sqrt{3}k\beta\gamma_0\delta_0^3}
 {\alpha_0^{5/2}(\alpha_0+\delta_0)^{9/2}(1-2k^2)}\\
& \times\int_0^{m\hat{T}}
 \sn\left(\frac{t}{\sqrt{1-2k^2}}\right)\dn\left(\frac{t}{\sqrt{1-2k^2}}\right)
 \cos(\hat{\omega} t+\phi)\d t\\[1ex]
=&
\begin{cases}
\displaystyle
\frac{18\sqrt{3}\pi\beta\gamma_0\delta_0^3\hat{\omega}}
 {\alpha_0^{5/2}(\alpha_0+\delta_0)^{9/2}}
 \sech\left(\frac{m\pi K(k')}{2K(k)}\right)\sin\phi  & \mbox{if $m$ is odd and $n=1$};\\[3ex]
0 & \mbox{otherwise,}
\end{cases}
\end{align*}
where the resonance relation $n T^k=m\hat{T}=2m\pi/\hat{\omega}$ has been used.
Using Theorem~\ref{thm:4a} and Remark~\ref{rmk:4a},
 we obtain the following.

\begin{prop}
\label{prop:6c}
Let $\theta_0=\pi$ and let $m>0$ be an odd integer.
Then saddle-node bifurcations of $2m\pi/\omega$-periodic orbits around $z=(\pi,0,0)$ occur
 in \eqref{eqn:ex} with \eqref{eqn:td} near the two curves given by \eqref{eqn:thm3b} with
\[
\hat{h}_\mathrm{max}^{m/1}=\frac{18\sqrt{3}\pi\beta\gamma_0\delta_0^3\hat{\omega}}
 {\alpha_0^{5/2}(\alpha_0+\delta_0)^{9/2}}
 \sech\left(\frac{m\pi K(k')}{2K(k)}\right),\quad
\hat{h}_\mathrm{min}^{m/1}=-\hat{h}_\mathrm{max}^{m/1}.
\]
Moreover, one of the periodic orbits born at the bifurcations is of a sink or saddle type,
 depending on whether
\begin{equation}
\hat{h}_\mathrm{max}^{m/1},\hat{h}_\mathrm{min}^{m/1}
<\frac{1}{\Delta}\biggl(\frac{J_1(k,1)J_3(k,1)}{m\hat{T}}+J_2(k,1)\biggr)
\label{eqn:prop6c}
\end{equation}
or the opposite of inequality holds while the other is always a saddle.
\end{prop}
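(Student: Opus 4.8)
The plan is to specialise the subharmonic analysis of Section~\ref{section4.1} to the forcing produced by the center-manifold reduction of Section~6.1 and then to quote Theorem~\ref{thm:4a} and Remark~\ref{rmk:4a}; the only genuine computation is the averaged-forcing integral \eqref{eqn:hathmn}. For $\theta_0=\pi$ one has $\sigma=-1$, so \eqref{eqn:exnu} gives $s_1=s_2=-1$ and the reduced periodic perturbation of the normal form is \eqref{eqn:pnf} with $h(\phi)=h_0\cos\phi$, $h_0=9\sqrt6\,\beta\gamma_0\delta_0^3/\bigl(2\alpha_0^{5/2}(\alpha_0+\delta_0)^{9/2}\bigr)$, where $\gamma_0=\alpha_0>0$ by \eqref{eqn:exag0}. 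Since $\nu_1<0$, I would set $\nu_1=-\hat\epsilon^2$ and pass to the rescaled system \eqref{eqn:psys2a}, whose unperturbed part \eqref{eqn:psys2a0} carries the family \eqref{eqn:po2a} with $\zeta_2^k(t)$ proportional to $\sn(t/\sqrt{1-2k^2})\dn(t/\sqrt{1-2k^2})$.

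The core step is the evaluation of \eqref{eqn:hathmn}. Using $\sn(u)\dn(u)=-\tfrac{d}{du}\cn(u)$ together with the Fourier series of $\cn$ (see \cite{BF54}), $\zeta_2^k(t)$ becomes an odd-harmonic sine series with frequencies $(2j+1)\Omega^k$, $j\ge0$, where $\Omega^k=2\pi/T^k$. The resonance relation $nT^k=m\hat T=2m\pi/\hat\omega$ forces $\hat\omega=(m/n)\Omega^k$, so integration of $\zeta_2^k(t)\cos(\hat\omega t+\phi)$ over $[0,m\hat T]$ annihilates every term unless $(2j+1)n=m$, that is, unless $n=1$ and $m$ is odd; then the surviving $m$-th harmonic together with $\int_0^{T^k}\sin(\hat\omega t)\cos(\hat\omega t+\phi)\,dt=-\tfrac12T^k\sin\phi$ gives $\hat h^{m/1}(\phi)=A_m\sin\phi$ with $A_m$ the amplitude recorded in the statement (the plus sign being fixed by $\gamma_0,\hat\omega>0$ and $\sech>0$). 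Hence $\hat h^{m/n}\not\equiv0$ precisely when $m$ is odd and $n=1$, with $\hat h^{m/1}_{\mathrm{max}}=A_m=-\hat h^{m/1}_{\mathrm{min}}$, and Theorem~\ref{thm:4a} is vacuous for the other $(m,n)$.

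Given this, Theorem~\ref{thm:4a} yields saddle-node bifurcations of $m\hat T$-periodic orbits of \eqref{eqn:psys2a}, equivalently of $mT=2m\pi/\omega$-periodic orbits of \eqref{eqn:ex} with \eqref{eqn:td}, near the two curves \eqref{eqn:thm3b} with the above $\hat h^{m/1}_{\mathrm{max}/\mathrm{min}}$; since the unperturbed orbits \eqref{eqn:po2a} encircle the origin of \eqref{eqn:psys2a0}, i.e.\ the point $z=(\pi,0,0)$, the new orbits are born there. For the stability claim I would substitute the saddle-node relation \eqref{eqn:thm3b} into criterion \eqref{eqn:rmk3b1} of Remark~\ref{rmk:4a}: with $s_2=-1$, $J_1(k,n)>0$ and $\nu_1<0$, dividing by $\nu_1$ reverses the inequality and collapses \eqref{eqn:rmk3b1}, evaluated on the $\hat h^{m/1}_{\mathrm{max}}$- (resp.\ $\hat h^{m/1}_{\mathrm{min}}$-)curve, into \eqref{eqn:prop6c}; the opposite inequality produces a source instead of a sink, while the remaining born orbit is always a saddle by the same result.

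The step I expect to be the main obstacle is the stability bookkeeping rather than the elliptic-function work: one must keep track that in Section~\ref{section4.1} the twist $d\Omega^k/dI^k$ is \emph{positive}, the opposite of its sign in Section~\ref{section3}, so that the sink/source alternative relative to the sign of $L^{m/n}$ is inherited correctly from Theorem~3.2 of \cite{Y96}; this sign, together with the reversals coming from $s_2=-1$ and from dividing the saddle-node relation by $\nu_1<0$, is exactly what pins down the precise form of \eqref{eqn:prop6c}. The rest is a routine substitution into the already-established theorems.
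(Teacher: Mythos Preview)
Your proposal is correct and follows essentially the same route as the paper: compute the averaged-forcing integral $\hat h^{m/n}(\phi)$ for the specific $h(\phi)=h_0\cos\phi$ coming out of the center-manifold reduction, observe that it is nonzero exactly when $n=1$ and $m$ is odd (with the $\sin\phi$ profile and the $\sech$ amplitude you state), and then invoke Theorem~\ref{thm:4a} for the saddle-node curves and Remark~\ref{rmk:4a} for the stability dichotomy. The paper records the elliptic-integral evaluation just above the proposition and then simply writes ``Using Theorem~\ref{thm:4a} and Remark~\ref{rmk:4a}, we obtain the following,'' so your outline is in fact more explicit than the paper's, particularly in spelling out the Fourier mechanism for $\cn$ and the substitution of the saddle-node relation into \eqref{eqn:rmk3b1} to reach \eqref{eqn:prop6c}. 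Your caution about the sign of $\d\Omega^k/\d I^k$ is well placed but already absorbed into Remark~\ref{rmk:4a}: the paper has done that bookkeeping once and for all in Section~\ref{section4.1}, so you may simply cite it rather than re-derive the sink/source alternative from \cite{Y96}.
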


\begin{figure}[t]
\begin{center}
\includegraphics[scale=0.5]{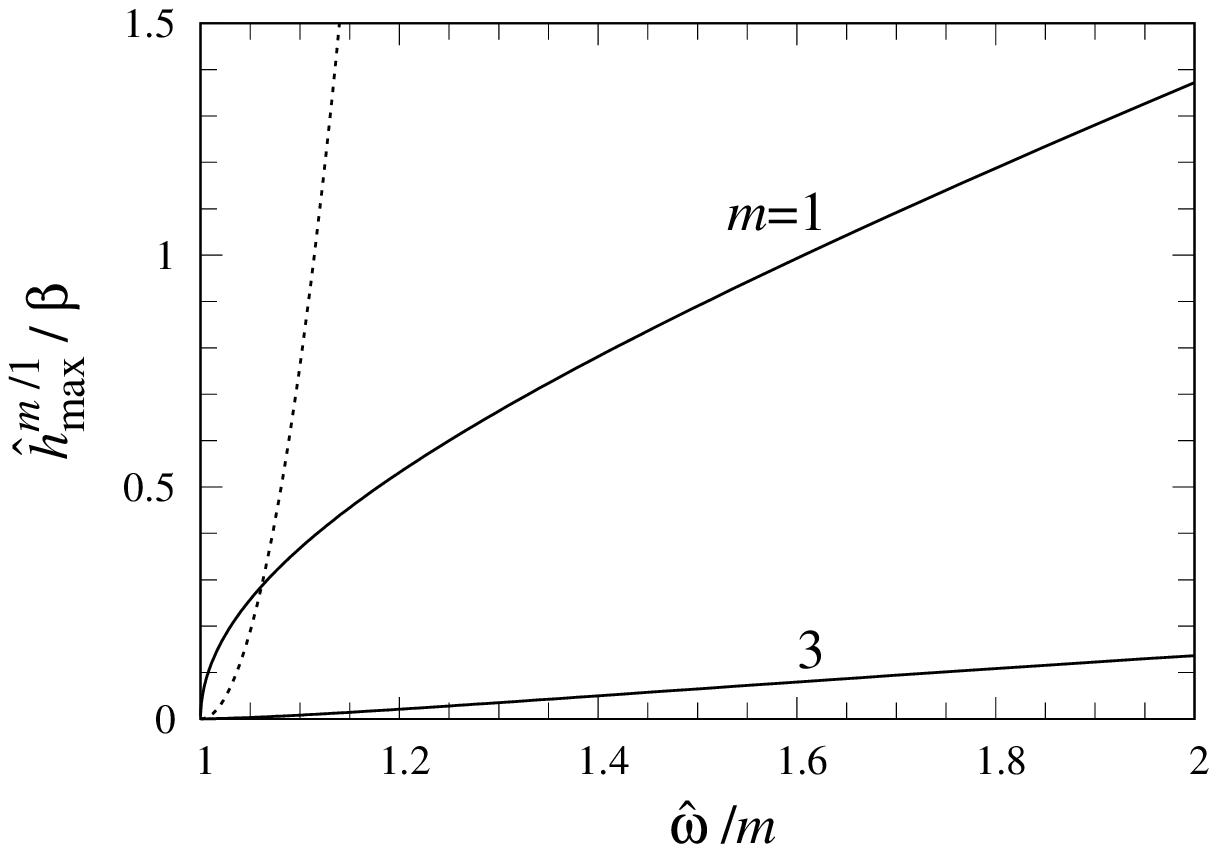}
\end{center}
\caption{Dependence of $\hat{h}_{\mathrm max}^{m/1}$
 on $\hat{\omega}$ for $\alpha_0=1$, $\gamma_0=1$, $\delta_0=0.5$ and $m=1,3$
 in Proposition~\ref{prop:6c}.}
\label{fig:6e}
\end{figure}

The dependence of $\hat{h}_{\mathrm max}^{m/1}$
 on $\hat{\omega}$ when $\alpha_0=1$, $\gamma_0=1$ and $\delta_0=0.5$
 is shown for $m=1,3$ in Fig.~\ref{fig:6e}.
The values of $\hat{h}_{\mathrm max}^{m/1}$ for $m\ge 5$ are very small
 compared with those for $m=1,3$.
Condition~\eqref{eqn:prop6c} 
 always holds for both $\hat{h}_{\mathrm{max}}$ and $\hat{h}_{\mathrm{min}}$,
 i.e., one of the periodic orbits born at the saddle-node bifurcations
 given by the first and second equations of \eqref{eqn:thm3b} is of a sink type.
 
We next assume that $\nu_1>0$.
Let $\nu_1=\hat{\epsilon}^2$
 and define $\hat{\omega},\Delta$ as in \eqref{eqn:hat}.
We compute \eqref{eqn:hath} for \eqref{eqn:ho2} as
\begin{align*}
\hat{h}(\phi)
 =&\mp\frac{9\sqrt{3}\beta\gamma_0\delta_0^3}
 {\alpha_0^{5/2}(\alpha_0+\delta_0)^{9/2}}
 \int_{-\infty}^\infty\sech t\,\tanh t\,\cos(\hat{\omega} t+\phi)\d t\\
 =& \pm\frac{9\sqrt{3}\pi\beta\gamma_0\delta_0^3\hat{\omega}}
 {\alpha_0^{5/2}(\alpha_0+\delta_0)^{9/2}}
 \sech\left(\frac{\pi\hat{\omega}}{2}\right)\sin\phi,
\end{align*}
and \eqref{eqn:hathmn} for \eqref{eqn:po2b1} and \eqref{eqn:po2b2} as
\begin{align*}
\hat{h}^{m/n}(\phi)
=& \mp\frac{9\sqrt{3}k^2\beta\gamma_0\delta_0^3}
 {(\alpha_0(\alpha_0+\delta_0))^{5/2}(2-k^2)}\\
&\times
\int_0^{m\hat{T}}
 \sn\left(\frac{t}{\sqrt{2-k^2}}\right)\cn\left(\frac{t}{\sqrt{2-k^2}}\right)
 \cos(\hat{\omega} t+\phi)\d t\\[1ex]
=&
\begin{cases}
\displaystyle
\pm\frac{9\sqrt{3}\pi\beta\gamma_0\delta_0^3\hat{\omega}}
 {\alpha_0^{5/2}(\alpha_0+\delta_0)^{9/2}}
 \sech\left(\frac{m\pi K(k')}{K(k)}\right)\sin\phi & \mbox{if $n=1$};\\[3ex]
0 & \mbox{otherwise}
\end{cases}
\end{align*}
and
\begin{align*}
\hat{h}^{m/n}(\phi)
=&- \frac{9\sqrt{3}k\beta\gamma_0\delta_0^3}
 {\alpha_0^{5/2}(\alpha_0+\delta_0)^{9/2}(2k^2-1)}\\
&\times
\int_0^{m\hat{T}}
 \sn\left(\frac{t}{\sqrt{2k^2-1}}\right)\dn\left(\frac{t}{\sqrt{2k^2-1}}\right)
 \cos(\hat{\omega} t+\phi)\d t\\[1ex]
=&
\begin{cases}
\displaystyle
\frac{18\sqrt{3}\pi\beta\gamma_0\delta_0^3\hat{\omega}}
 {\alpha_0^{5/2}(\alpha_0+\delta_0)^{9/2}}
 \sech\left(\frac{m\pi K(k')}{2K(k)}\right)\sin\phi & \mbox{if $m$ is odd and $n=1$};\\[3ex]
0 & \mbox{otherwise,}
\end{cases}
\end{align*}
respectively, where the resonance relation $n T^k=m\hat{T}=2m\pi/\hat{\omega}$ has been used.
Using Theorems~\ref{thm:4b}-\ref{thm:4d} and Remark~\ref{rmk:4b},
 we obtain the following.

\begin{prop}
\label{prop:6d}
Let $\theta_0=\pi$.
There exist a periodic orbit near $z=(\pi,0,0)$ in \eqref{eqn:ex} with \eqref{eqn:td}
 such that its stable and unstable manifolds intersect transversely in both sides
 if for $|\nu_1|,|\nu_2|,\epsilon>0$ sufficiently small
 condition~\eqref{eqn:hcon2a} or equivalently \eqref{eqn:hcon2b} holds with
\[
\hat{h}_\mathrm{max}=\frac{9\sqrt{3}\pi\beta\gamma_0\delta_0^3\hat{\omega}}
 {\alpha_0^{5/2}(\alpha_0+\delta_0)^{9/2}}
 \sech\left(\frac{\pi\hat{\omega}}{2}\right),\quad
\hat{h}_\mathrm{min}=-\hat{h}_\mathrm{max}.
\]
Moreover, homoclinic bifurcations occur
 near the two curves given by \eqref{eqn:thm4b1} or equivalently by \eqref{eqn:thm4b2}.
\end{prop}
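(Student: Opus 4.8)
The plan is to feed the center-manifold reduction carried out earlier in this section into Theorem~\ref{thm:4b}, and then to observe that in the present case the two conditions \eqref{eqn:hcon2a} and \eqref{eqn:hcon2b}, as well as the two sets of bifurcation curves \eqref{eqn:thm4b1} and \eqref{eqn:thm4b2}, coincide because $\hat h$ is an odd function of $\phi$ here.

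First I would recall that for $\theta_0=\pi$ one has $\sigma=-1$, hence $\alpha_0=\gamma_0=(1-\delta_1)/\delta_0>0$ and $s_1=s_2=\sign\sigma=-1$, and the dynamics of \eqref{eqn:ex} with \eqref{eqn:td} on the locally invariant manifold $\N_\epsilon$ are governed, after dropping higher-order terms, by \eqref{eqn:pnf} with $\nu_1,\nu_2,\bar\omega$ as in \eqref{eqn:exnu} and with $h(\phi)$ a nonzero multiple of $\cos\phi$ (nonzero since $\beta>0$ and $\gamma_0\ne 0$); in particular $h(\phi)\not\equiv 0$, so Lemma~\ref{lem:3a} applies. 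Assuming $\nu_1>0$, we are in the situation of Section~\ref{section4.2}: putting $\nu_1=\hat\epsilon^2$ and applying the scaling \eqref{trans4.2} turns \eqref{eqn:pnf} into \eqref{eqn:psys2b}, whose unperturbed limit \eqref{eqn:psys2b0} has a hyperbolic saddle at the origin together with the pair of homoclinic orbits \eqref{eqn:ho2}. For $\hat\epsilon>0$ sufficiently small \eqref{eqn:psys2b} carries a hyperbolic periodic orbit near the origin, which lifts through $\N_\epsilon$ to the asserted periodic orbit of \eqref{eqn:ex} with \eqref{eqn:td} near $z=(\pi,0,0)$.

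Next I would use the value of $\hat h(\phi)$ obtained from \eqref{eqn:hath} for \eqref{eqn:ho2}, namely the integral evaluated just before the statement, which equals $\dfrac{9\sqrt3\,\pi\beta\gamma_0\delta_0^3\hat\omega}{\alpha_0^{5/2}(\alpha_0+\delta_0)^{9/2}}\sech\bigl(\tfrac{\pi\hat\omega}{2}\bigr)\sin\phi$ up to sign. Since $\gamma_0>0$ and every other factor of the coefficient is positive, $\hat h_{\mathrm{max}}$ equals the stated expression and $\hat h_{\mathrm{min}}=-\hat h_{\mathrm{max}}$. Substituting this and $s_2=-1$ into \eqref{eqn:hcon2a} and into \eqref{eqn:hcon2b}, both chains of inequalities reduce to
\[
\Bigl(-\tfrac34\Delta\hat h_{\mathrm{max}}+\tfrac45\Bigr)\nu_1<\nu_2<\Bigl(\tfrac34\Delta\hat h_{\mathrm{max}}+\tfrac45\Bigr)\nu_1,
\]
and the four curves in \eqref{eqn:thm4b1} and \eqref{eqn:thm4b2} reduce to the two curves $\nu_2=\bigl(\pm\tfrac34\Delta\hat h_{\mathrm{max}}+\tfrac45\bigr)\nu_1$ with $\nu_1>0$. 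Hence \eqref{eqn:hcon2a} and \eqref{eqn:hcon2b} are equivalent; when they hold, Theorem~\ref{thm:4b} gives transverse intersection of the right branches (from \eqref{eqn:hcon2a}) and of the left branches (from \eqref{eqn:hcon2b}) of the stable and unstable manifolds of the periodic orbit near the origin simultaneously, i.e.\ transversality on both sides, and the same theorem produces the homoclinic bifurcations near those two curves. Pulling everything back through the scaling \eqref{trans4.2} and the invariant-manifold reduction yields the statement for \eqref{eqn:ex} with \eqref{eqn:td}; by \eqref{eqn:exnu} the hypothesis $\nu_1>0$ amounts to a half-plane condition on $(\alpha-\alpha_0,\gamma-\gamma_0)$.

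The one step that is not mere bookkeeping is the collapse of \eqref{eqn:hcon2a} and \eqref{eqn:hcon2b}, and of the two sets of curves, into a single one; this is precisely the identity $\hat h_{\mathrm{min}}=-\hat h_{\mathrm{max}}$. It is immediate from the explicit formula $\hat h(\phi)\propto\sin\phi$, but it also follows structurally from $h(\phi+\pi)=-h(\phi)$, which $h(\phi)\propto\cos\phi$ satisfies, since \eqref{eqn:hath} then gives $\hat h(\phi+\pi)=-\hat h(\phi)$. Everything else is a direct citation of Theorem~\ref{thm:4b} and Lemma~\ref{lem:3a} after substituting the coefficients of the example; the subharmonic part, if needed, follows in the same way from Theorems~\ref{thm:4c} and \ref{thm:4d} and Remark~\ref{rmk:4b} using the values of $\hat h^{m/n}$ computed just before the statement.
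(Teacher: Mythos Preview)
Your argument is correct and follows the same route as the paper: the paper simply computes $\hat h(\phi)$ for \eqref{eqn:ho2} as a multiple of $\sin\phi$ and then invokes Theorem~\ref{thm:4b}, leaving implicit the observation you spell out, namely that $\hat h_{\min}=-\hat h_{\max}$ forces \eqref{eqn:hcon2a} and \eqref{eqn:hcon2b} (and likewise \eqref{eqn:thm4b1} and \eqref{eqn:thm4b2}) to coincide. The final remark about the subharmonic part is extraneous here, since Proposition~\ref{prop:6d} concerns only the homoclinic case.
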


\begin{prop}
\label{prop:6e}
Let $\theta_0=\pi$ and let $m>0$ be an integer.
Then saddle-node bifurcations of $2m\pi/\omega$-periodic orbits in both sides of $z=(\pi,0,0)$
 occur in \eqref{eqn:ex} with \eqref{eqn:td} near the two curves
 given by \eqref{eqn:thm4c1} or equivalently by \eqref{eqn:thm4c2} with
\[
\hat{h}_\mathrm{max}^{m/1}=\frac{9\sqrt{3}\pi m\beta\gamma_0\delta_0^3\hat{\omega}}
 {\alpha_0^{5/2}(\alpha_0+\delta_0)^{9/2}}
 \sech\left(\frac{m\pi K(k')}{K(k)}\right),\quad
\hat{h}_\mathrm{min}^{m/1}=-\hat{h}_\mathrm{max}^{m/1}.
\]
Moreover, one of the periodic orbits born at the bifurcations is of a sink or saddle type,
 depending on whether
\begin{align}
&
\hat{h}_\mathrm{max}^{m/1},\hat{h}_\mathrm{min}^{m/1}
>-\frac{1}{\Delta}\biggl(\frac{J_1(k,1)J_3(k,1)}{m\hat{T}}-J_2(k,1)\biggr)\notag\\
&
\left(\mbox{resp.}\quad \hat{h}_\mathrm{max}^{m/1},\hat{h}_\mathrm{min}^{m/1}
<\frac{1}{\Delta}\biggl(\frac{J_1(k,1)J_3(k,1)}{m\hat{T}}-J_2(k,1)\biggr) \right)
\label{eqn:prop6e}
\end{align}
or the opposite of inequality holds, while the other is always a saddle.
\end{prop}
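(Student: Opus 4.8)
The plan is to read off Proposition~\ref{prop:6e} from Theorem~\ref{thm:4c} and Remark~\ref{rmk:4b} applied to the two-dimensional system produced in Section~6.1, so that the only substantive computation is the evaluation of the subharmonic Melnikov function $\hat h^{m/n}(\phi)$ for the families $\zeta_\pm^k(t)$ of \eqref{eqn:po2b1}. First I would recall that $\theta_0=\pi$ gives $\sigma=-1$, hence $s_1,s_2=-1$ and $\gamma_0>0$ by \eqref{eqn:exag0}; that the center-manifold reduction of Section~6.1 together with the transformations of Section~2 bring \eqref{eqn:ex} with \eqref{eqn:td} into \eqref{eqn:pnf} with $\nu_1,\nu_2,\bar\omega$ given by \eqref{eqn:exnu} and $h(\phi)$ a nonzero multiple of $\cos\phi$; and that, taking $\nu_1>0$ and rescaling as in Section~4.2, one arrives at \eqref{eqn:psys2b}, whose unperturbed part \eqref{eqn:psys2b0} carries the periodic families \eqref{eqn:po2b1} inside the pair of homoclinic orbits \eqref{eqn:ho2}, one family on each side of $\zeta_1=0$. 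By the symmetry about $z=(\pi,0,0)$ noted in Section~6.1, these two families correspond to periodic orbits on the two sides of $z=(\pi,0,0)$.

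Second, I would substitute $h(\hat\omega t+\phi)$, a multiple of $\cos(\hat\omega t+\phi)$, into \eqref{eqn:hathmn} with $\zeta_2^k=\zeta_{2+}^k$ and evaluate the resulting elliptic integral. Writing $u=t/\sqrt{2-k^2}$ and using the resonance relation $nT_+^k=m\hat T=2m\pi/\hat\omega$ — so that $u$ runs over $n$ periods $2K(k)$ of $\sn u\,\cn u$ while $\hat\omega t$ runs over $m$ periods of the cosine — the Fourier expansion of $\sn u\,\cn u$ (for instance via $\tfrac{\d}{\d u}\dn u=-k^2\sn u\,\cn u$ together with the Fourier series of $\dn u$) isolates the $j=m/n$ harmonic, which is present only when $n=1$. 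The outcome is the closed form displayed just above the statement: $\hat h^{m/n}(\phi)\equiv0$ unless $n=1$, while $\hat h^{m/1}(\phi)$ is a nonzero multiple of $\sin\phi$, nonzero because the elliptic nome lies in $(0,1)$ and $\beta,\gamma_0>0$. In particular $\hat h^{m/1}_{\min}=-\hat h^{m/1}_{\max}$, and the extrema of $\hat h^{m/1}$ are nondegenerate, so condition~\eqref{eqn:thm3bp} holds and no modification of the underlying Theorem~\ref{thm:3b} is needed.

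Third, I would invoke Theorem~\ref{thm:4c}. With $M_\pm^{m/n}(\phi;\hat\nu)=\hat\nu J_1(k,n)+s_2J_2(k,n)\pm\Delta\hat h^{m/n}(\phi)$, the family $\zeta_+^k$ (resp.\ $\zeta_-^k$) undergoes saddle-node bifurcations of $m\hat T$-periodic orbits — equivalently $mT$-periodic with $T=2\pi/\omega$, i.e.\ $2m\pi/\omega$-periodic, orbits of \eqref{eqn:ex} with \eqref{eqn:td} — along the curves \eqref{eqn:thm4c1} (resp.\ \eqref{eqn:thm4c2}). A direct substitution using $\hat h^{m/1}_{\min}=-\hat h^{m/1}_{\max}$ shows that the two pairs of curves coincide, which is the ``or equivalently'' in the statement, and so both sides of $z=(\pi,0,0)$ bifurcate together.

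Finally, Remark~\ref{rmk:4b} gives the stability trichotomy: one of the two orbits born at the bifurcation is a sink (resp.\ a source) and the other a saddle according to the sign of $m\hat\nu\hat T+s_2J_3(k,n)$. Solving for $\hat\nu$ from the relation $\hat\nu J_1(k,1)+s_2J_2(k,1)=-\Delta\hat h^{m/1}_{\max}$ (resp.\ $-\Delta\hat h^{m/1}_{\min}$) that holds along the bifurcation curve, inserting it into that sign condition, and rearranging produces condition~\eqref{eqn:prop6e}, its two listed forms being interchangeable through $\hat h^{m/1}_{\min}=-\hat h^{m/1}_{\max}$. The one genuinely non-routine step is the evaluation of the elliptic Melnikov integral and the identification of the $n=1$ resonance selection rule; apart from that, the main place needing care is the bookkeeping of constants through the chain of rescalings in Sections~2 and~4.2.
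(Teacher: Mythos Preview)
Your proposal is correct and follows essentially the same route as the paper: compute $\hat h^{m/n}(\phi)$ for the inner families $\zeta_\pm^k$ of \eqref{eqn:po2b1} with the explicit $h(\phi)$ coming from the center-manifold reduction of Section~6.1, observe that it is a nonzero multiple of $\sin\phi$ precisely when $n=1$ (for every integer $m>0$), and then read off the bifurcation curves and stability types from Theorem~\ref{thm:4c} and Remark~\ref{rmk:4b}. The paper does not write out a separate proof for Proposition~\ref{prop:6e} but simply states that it follows from Theorems~\ref{thm:4b}--\ref{thm:4d} and Remark~\ref{rmk:4b} after displaying the computed $\hat h^{m/n}(\phi)$, which is exactly what you propose to do; your additional remarks on why \eqref{eqn:thm4c1} and \eqref{eqn:thm4c2} coincide and how \eqref{eqn:prop6e} arises by substituting the bifurcation value of $\hat\nu$ into $L_\pm^{m/1}$ are the natural elaborations the paper leaves implicit.
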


\begin{prop}
\label{prop:6f}
Let $\theta_0=\pi$ and let $m>0$ be an odd integer.
Then saddle-node bifurcations of $2m\pi/\omega$-periodic orbits
 encircling the pair of homoclinic orbits to $z=(\pi,0,0)$ for $\epsilon=0$ occur
 in \eqref{eqn:ex} with \eqref{eqn:td} near the two curves given by \eqref{eqn:thm4d} with
\[
\hat{h}_\mathrm{max}^{m/1}=\frac{18\sqrt{3}\pi m\beta\gamma_0\delta_0^3\hat{\omega}}
 {\alpha_0^{5/2}(\alpha_0+\delta_0)^{9/2}}
 \sech\left(\frac{m\pi K(k')}{2K(k)}\right),\quad
\hat{h}_\mathrm{min}^{m/1}=-\hat{h}_\mathrm{max}^{m/1}.
\]
Moreover, one of the periodic orbits born at the bifurcations is of a sink or source type,
 depending on whether
\begin{equation}
\hat{h}_\mathrm{max}^{m/1},\hat{h}_\mathrm{min}^{m/1}
>-\frac{1}{\Delta}\biggl(\frac{J_1(k,1)J_3(k,1)}{m\hat{T}}-J_2(k,1)\biggr)
\label{eqn:prop6f}
\end{equation}
or the opposite of inequality holds, while the other is always a saddle.
\end{prop}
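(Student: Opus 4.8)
The plan is to apply Theorem~\ref{thm:4d} together with Remark~\ref{rmk:4b} to the family of periodic orbits \eqref{eqn:po2b2}, which are exactly the orbits encircling the pair of homoclinic orbits \eqref{eqn:ho2}, and then to read the result back into the parameters $(\alpha,\gamma)$ of \eqref{eqn:ex}. Since $\theta_0=\pi$ we have $s_1=s_2=-1$, and a region enclosing the homoclinic pair exists precisely when the origin of the reduced planar system is a hyperbolic saddle, i.e.\ for $\nu_1>0$; so I work in the framework of Section~\ref{section4.2}, setting $\nu_1=\hat\epsilon^2$, introducing $\zeta$ and $\hat\nu$ by \eqref{trans4.2}, rescaling time, and passing to \eqref{eqn:psys2b}, whose unperturbed limit \eqref{eqn:psys2b0} carries the outer family \eqref{eqn:po2b2} with $k\in(1/\sqrt2,1)$.

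The only nonelementary ingredient is the subharmonic Melnikov integral \eqref{eqn:hathmn} evaluated along \eqref{eqn:po2b2}, which was computed in the paragraph preceding the proposition: imposing the resonance relation $nT^k=m\hat T=2m\pi/\hat\omega$, only the case $n=1$ with $m$ odd gives a nonzero answer, and there $\hat h^{m/n}(\phi)$ is a nonzero multiple of $\sin\phi$ whose amplitude is the quantity called $\hat h^{m/1}_{\mathrm{max}}$ in the statement, so that $\hat h^{m/1}_{\mathrm{min}}=-\hat h^{m/1}_{\mathrm{max}}$. In particular $\hat h^{m/n}(\phi)\not\equiv 0$, so Theorem~\ref{thm:4d} applies verbatim and produces saddle-node bifurcations of $m\hat T$-periodic orbits near the two curves \eqref{eqn:thm4d} in \eqref{eqn:psys2b}, hence of $2m\pi/\omega$-periodic orbits encircling the homoclinic pair of $z=(\pi,0,0)$ in \eqref{eqn:ex} with \eqref{eqn:td}, since $T=2\pi/\omega$ is the common forcing period.

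For the stability assertion I would invoke Remark~\ref{rmk:4b}: one of the two bifurcating orbits is a sink (resp.\ source) and the other a saddle according as $m\hat\nu\hat T+s_2J_3(k,n)<0$ (resp.\ $>0$). With $s_2=-1$ and $n=1$ this reads $m\hat\nu\hat T<J_3(k,1)$, and on each of the curves \eqref{eqn:thm4d} the defining relation of Theorem~\ref{thm:4d} gives $\hat\nu J_1(k,1)-J_2(k,1)=-\Delta\hat h^{m/1}_{\mathrm{max}}$ or $-\Delta\hat h^{m/1}_{\mathrm{min}}$, hence $\hat\nu=\bigl(J_2(k,1)-\Delta\hat h^{m/1}_{\mathrm{max}/\mathrm{min}}\bigr)/J_1(k,1)$. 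Substituting this into $m\hat\nu\hat T<J_3(k,1)$ and clearing denominators (using $J_1(k,1),\hat T>0$) yields exactly $\hat h^{m/1}_{\mathrm{max}},\hat h^{m/1}_{\mathrm{min}}>-\tfrac1\Delta\bigl(J_1(k,1)J_3(k,1)/(m\hat T)-J_2(k,1)\bigr)$, which is \eqref{eqn:prop6f}; the reversed inequality gives the source case, and in both cases the remaining orbit is a saddle.

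Since the elliptic-function computation of \eqref{eqn:hathmn} has already been carried out, the only work left is this sign- and scaling-bookkeeping, which is also where I expect the main (though modest) difficulty to lie: one must keep careful track of the transformation \eqref{trans4.2} relating $(\nu_1,\nu_2)$ to $(\hat\epsilon,\hat\nu)$, of the fact that $\hat T=2\pi/\hat\omega$ and $mT=2m\pi/\omega$ are the periods measured in the two time scales, and of the sign $s_2=-1$, in order to recast Remark~\ref{rmk:4b} in the stated form \eqref{eqn:prop6f}.
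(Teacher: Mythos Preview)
Your proposal is correct and follows the same approach as the paper, which simply states that Propositions~\ref{prop:6d}--\ref{prop:6f} are obtained ``using Theorems~\ref{thm:4b}--\ref{thm:4d} and Remark~\ref{rmk:4b}'' after displaying the relevant computations of $\hat h^{m/n}(\phi)$. Your explicit unpacking of the stability condition---substituting $\hat\nu=(J_2(k,1)-\Delta\hat h^{m/1}_{\max/\min})/J_1(k,1)$ from the bifurcation curve into the sink criterion $m\hat\nu\hat T-J_3(k,1)<0$ of Remark~\ref{rmk:4b} with $s_2=-1$---is more detailed than what the paper writes out, but this is exactly the bookkeeping implicitly intended.
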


\begin{figure}[t]
\begin{center}
\includegraphics[scale=0.5]{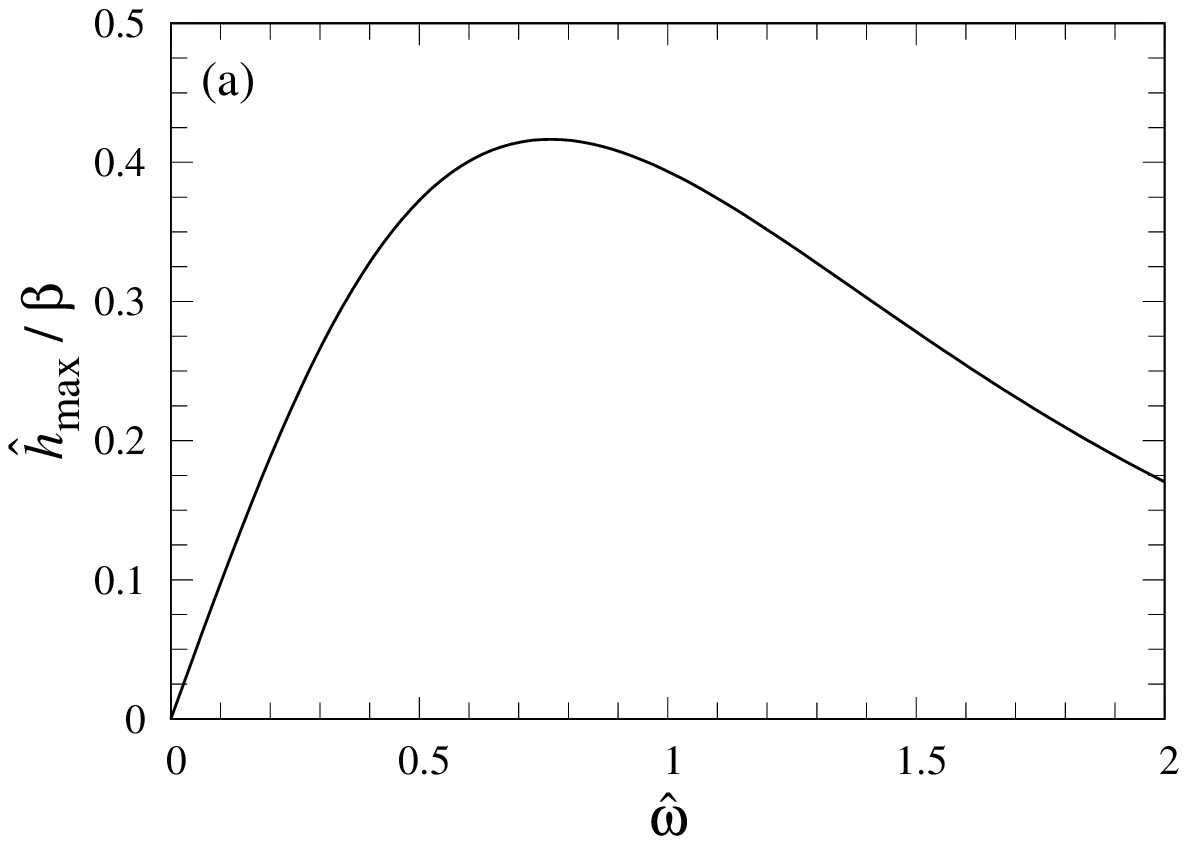}\\[2ex]
\includegraphics[scale=0.5]{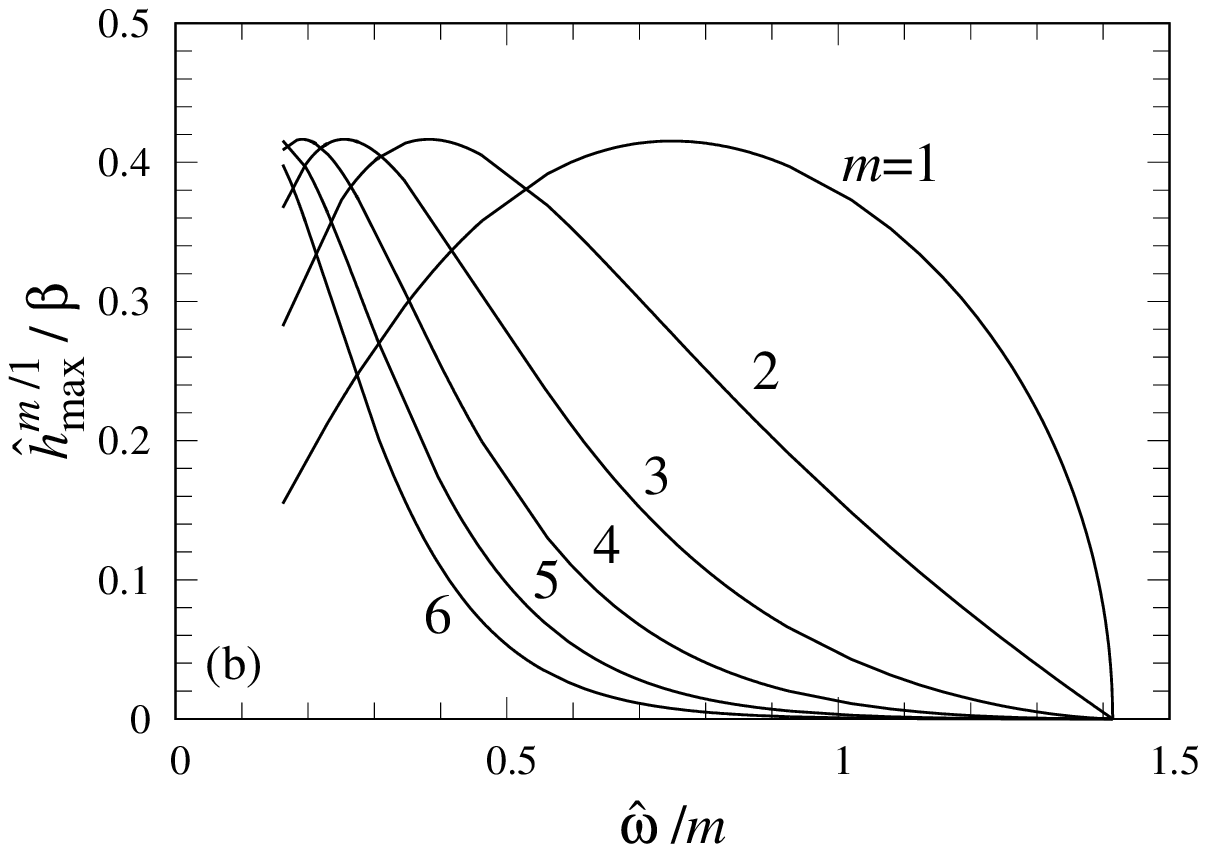}\quad
\includegraphics[scale=0.5]{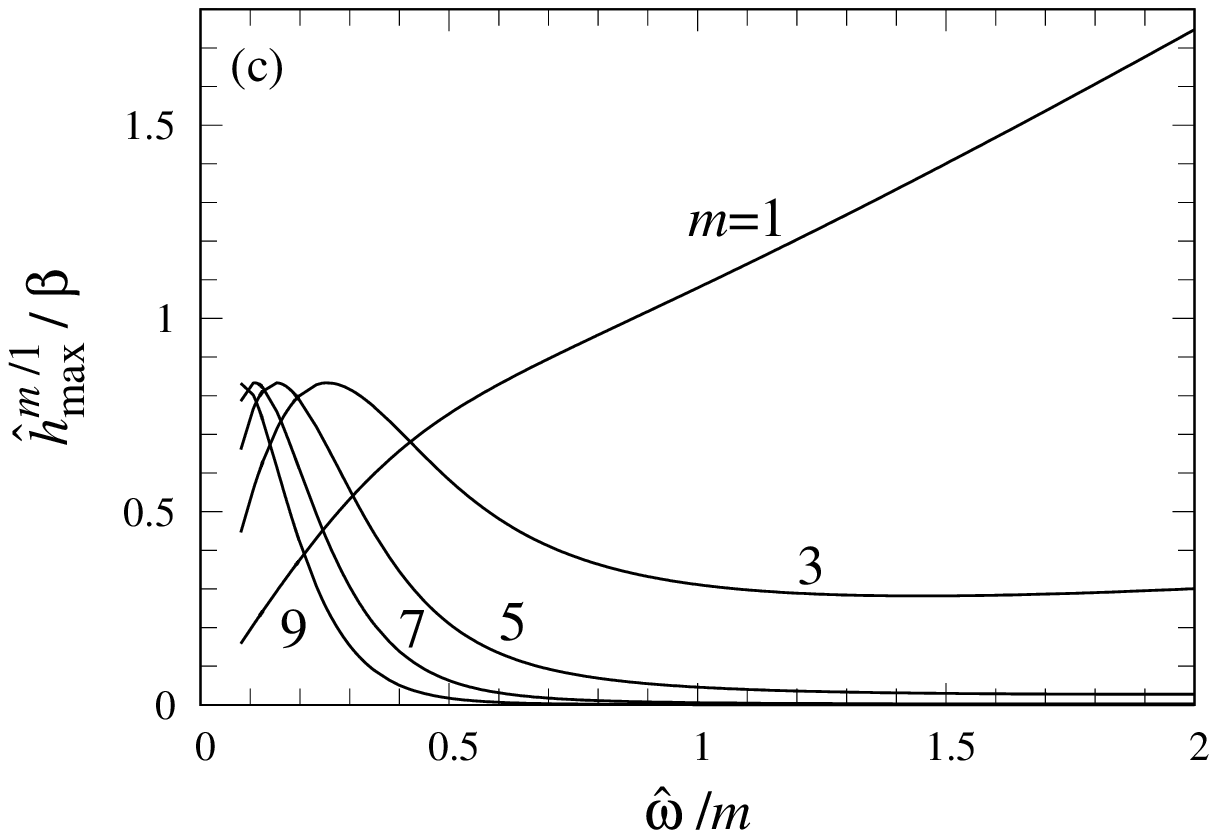}
\end{center}
\caption{Dependence of $\hat{h}_{\mathrm max}$ and $\hat{h}_{\mathrm max}^{m/1}$
 on $\hat{\omega}$ for $\alpha_0=1$, $\gamma_0=1$ and $\delta_0=0.5$
 in Propositions~\ref{prop:6d}-\ref{prop:6f}:
(a) $\hat{h}_{\mathrm max}$;
(b) $\hat{h}_{\mathrm max}^{m/1}$ for $m=1$-$6$ in Proposition~\ref{prop:6e};
(c) $\hat{h}_{\mathrm max}^{m/1}$ for $m=1,3,5,7,9$ in Proposition~\ref{prop:6f}.}
\label{fig:6f}
\end{figure}

Figure~\ref{fig:6f}(a) displays
 the dependence of $\hat{h}_{\mathrm max}$ on $\hat{\omega}$ in Proposition~\ref{prop:6d},
 and Figs.~\ref{fig:6f}(b) and (c) display the dependence of $\hat{h}_{\mathrm max}^{m/1}$
 on $\hat{\omega}$ for $m=1$-$6$ and $m=1,3,5,7,9$,
 in Propositions~\ref{prop:6e} and \ref{prop:6f}, respectively, 
 when $\alpha_0=1$, $\gamma_0=1$ and $\delta_0=0.5$.
Condition~\eqref{eqn:prop6e} was shown numerically to always hold
 for $\hat{h}_{\mathrm{max}}$ and $\hat{h}_{\mathrm{min}}$ in Fig.~\ref{fig:6f}(c),
 i.e., one of the periodic orbits born at the saddle-node bifurcations
 given by \eqref{eqn:thm4c1} or \eqref{eqn:thm4c2} is of a sink type,
Condition~\eqref{eqn:prop6f} was also shown numerically to always hold  in Fig.~\ref{fig:6f}(c).

Henceforth we fix $\delta_0=0.5$, $\delta_1=0.5$, 
 and $\Delta=1$, so that $\alpha_0=1$ and $\gamma_0=1$.
The values of $\epsilon$, $\hat{\epsilon}$ and $\omega$ are computed
 from \eqref{eqn:hat}, \eqref{eqn:exnu}
 and the relation $\nu_1=-\hat{\epsilon}^2$ or $\hat{\epsilon}^2$
 when $\alpha$ and $\gamma$ are changed.
Numerical computations obtained by the computer tool \texttt{AUTO} \cite{DO12}
 are also given (see Appendix~A for the approach used).
 
\begin{figure}[t]
\begin{center}
\includegraphics[scale=0.51]{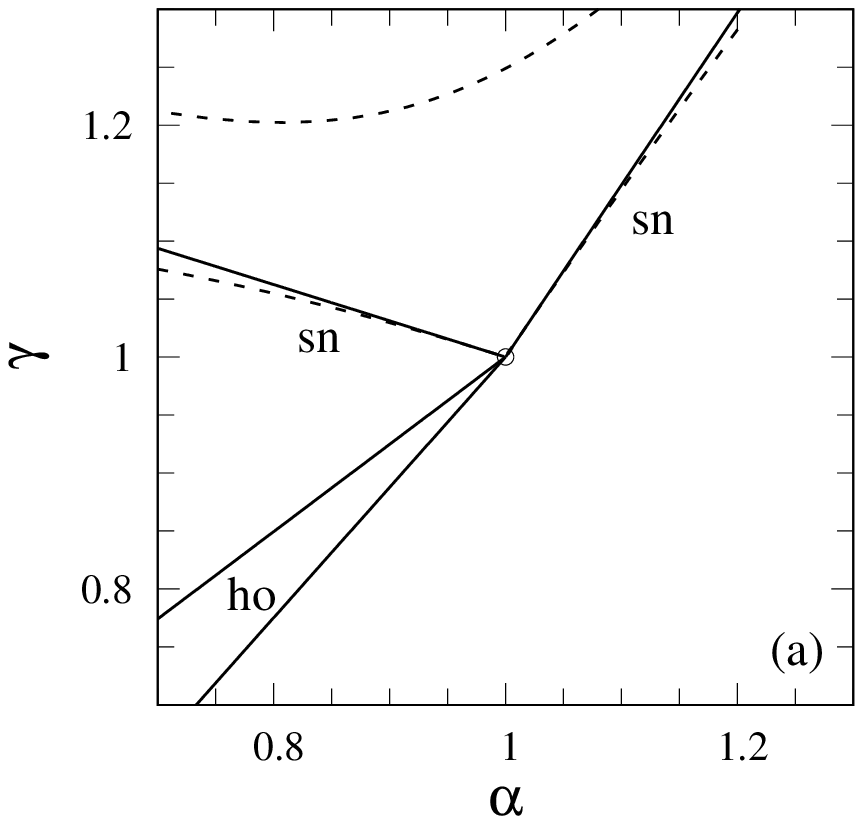}\\[2ex]
\includegraphics[scale=0.51]{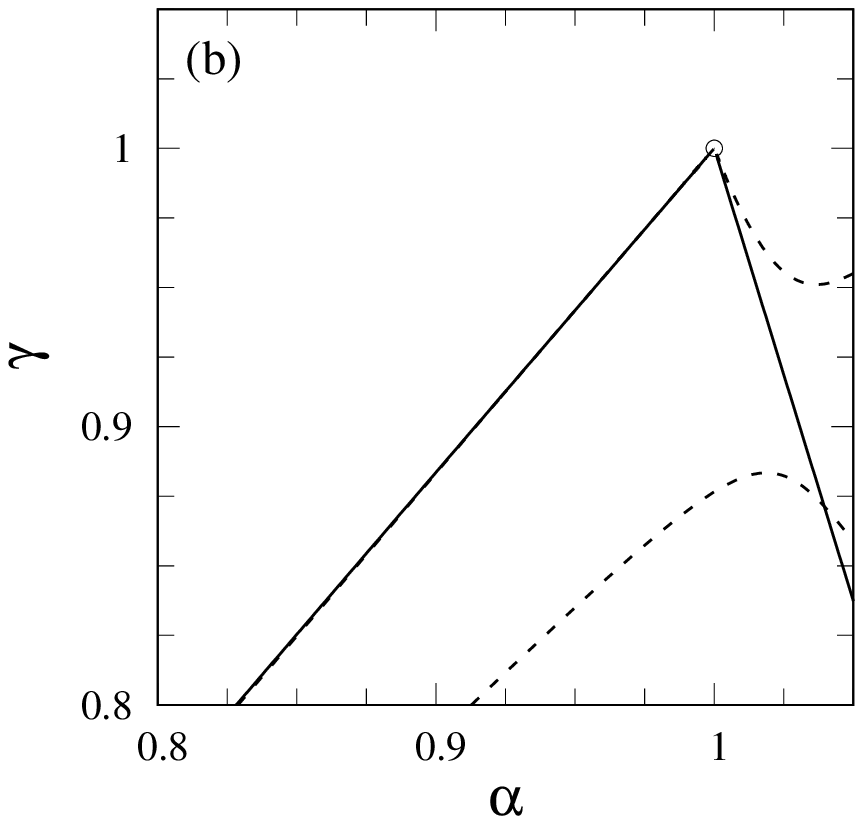}
\includegraphics[scale=0.51]{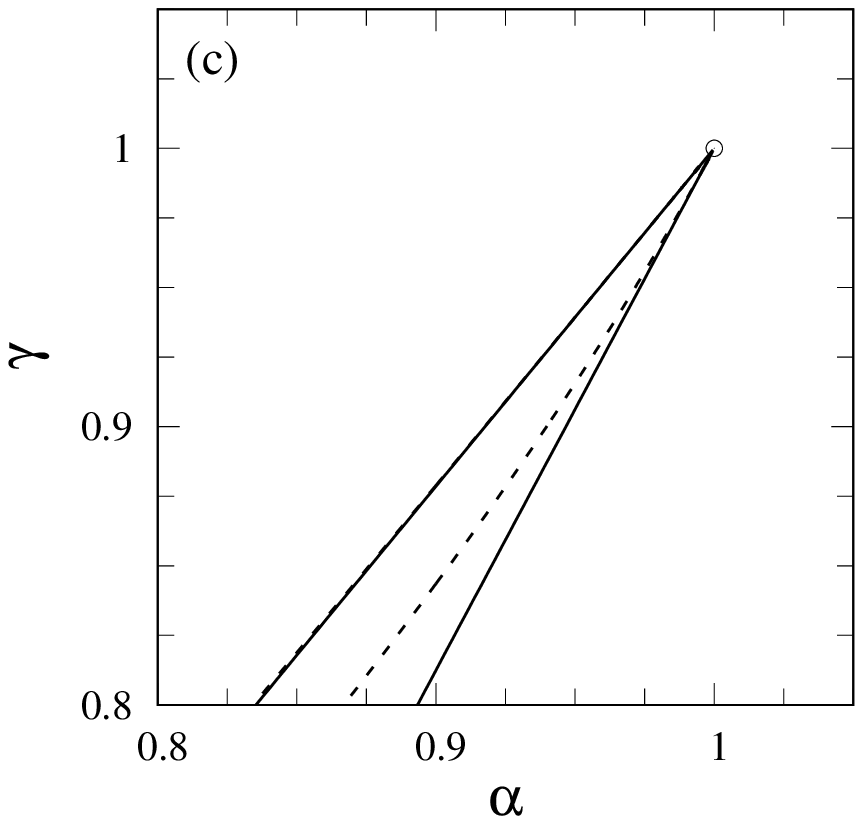}
\end{center}
\caption{Bifurcation sets in \eqref{eqn:ex} with \eqref{eqn:td}
 for $\theta_0=\pi$, $\delta_0=0.5$, $\delta_1=0.5$ and $\beta=5$:
(a) Homoclinic and saddle-node bifurcations for $\hat{\omega}=1.4$; 
(b) and (c) saddle-node bifurcations for $\hat{\omega}=0.8$.
The circle represents the codimension-two bifurcation point.
The solid and broken lines, respectively, represent the theoretical predictions and numerical results
 for the saddle-node bifurcations.
 The labels ``ho'' and ``sn'', respectively, represent heteroclinic and saddle-node bifurcations
 in Fig.~(a).
Bifurcations of periodic orbits inside and outside of the homocliinic orbits
 are plotted in Figs.~(b) and (c).}
\label{fig:6g}
\end{figure}

Figure~\ref{fig:6g} shows saddle-node bifurcation sets of harmonic orbits ($m,n=1$)
 as well as homoclinic bifurcation sets,
 detected by Propositions~\ref{prop:6c}-\ref{prop:6f}.
The saddle-node bifurcation sets in Figs.~\ref{fig:6g}(a), (b) and (c) were plotted
  by using Propositions~\ref{prop:6c}, \ref{prop:6e} and \ref{prop:6f}, respectively.
In the regions between the two curves
 there exist transversely homoclinic and harmonic orbits
 near the unperturbed homoclinic and periodic orbits with $\epsilon=0$
 (see Figs.~13(b) and (d) and 14(b) of \cite{Y99}).
Numerical computations by \texttt{AUTO} for saddle-node bifurcation sets
 are also plotted as broken lines
 and found to agree well with the theoretical predictions,
 especially near the codimension-two bifurcation point $(\alpha,\gamma)=(1,1)$.
In some cases,
 the agreement between theoretical and numerical ones is almost complete.
 
\begin{figure}[t]
\begin{center}
\includegraphics[scale=0.51]{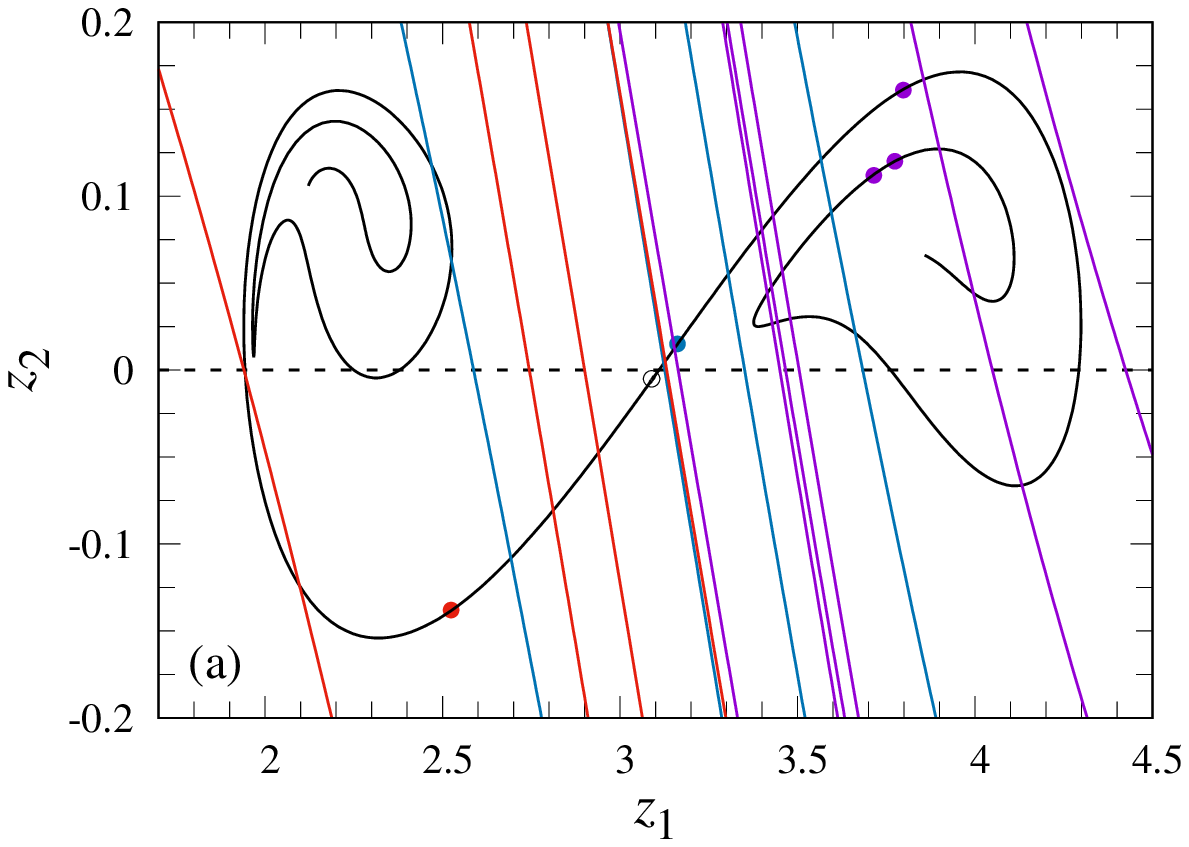}\
\includegraphics[scale=0.51]{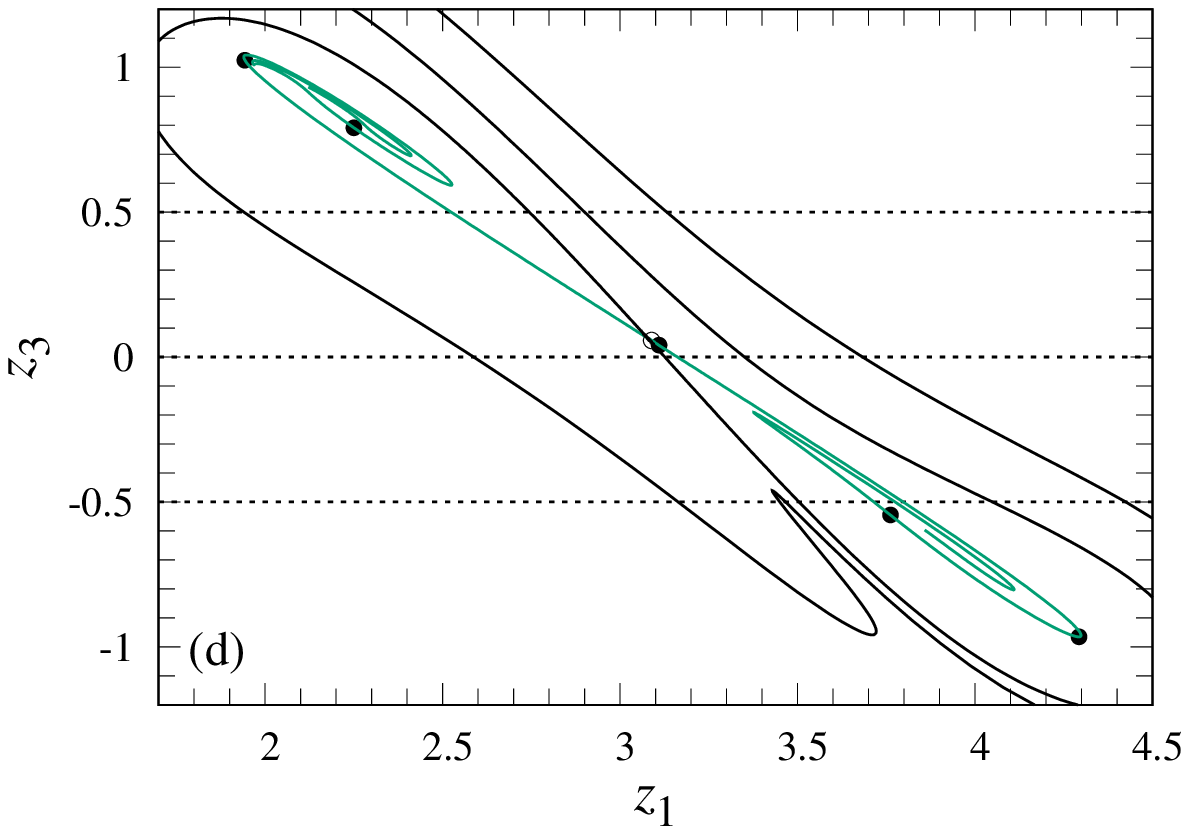}\\[2ex]
\includegraphics[scale=0.51]{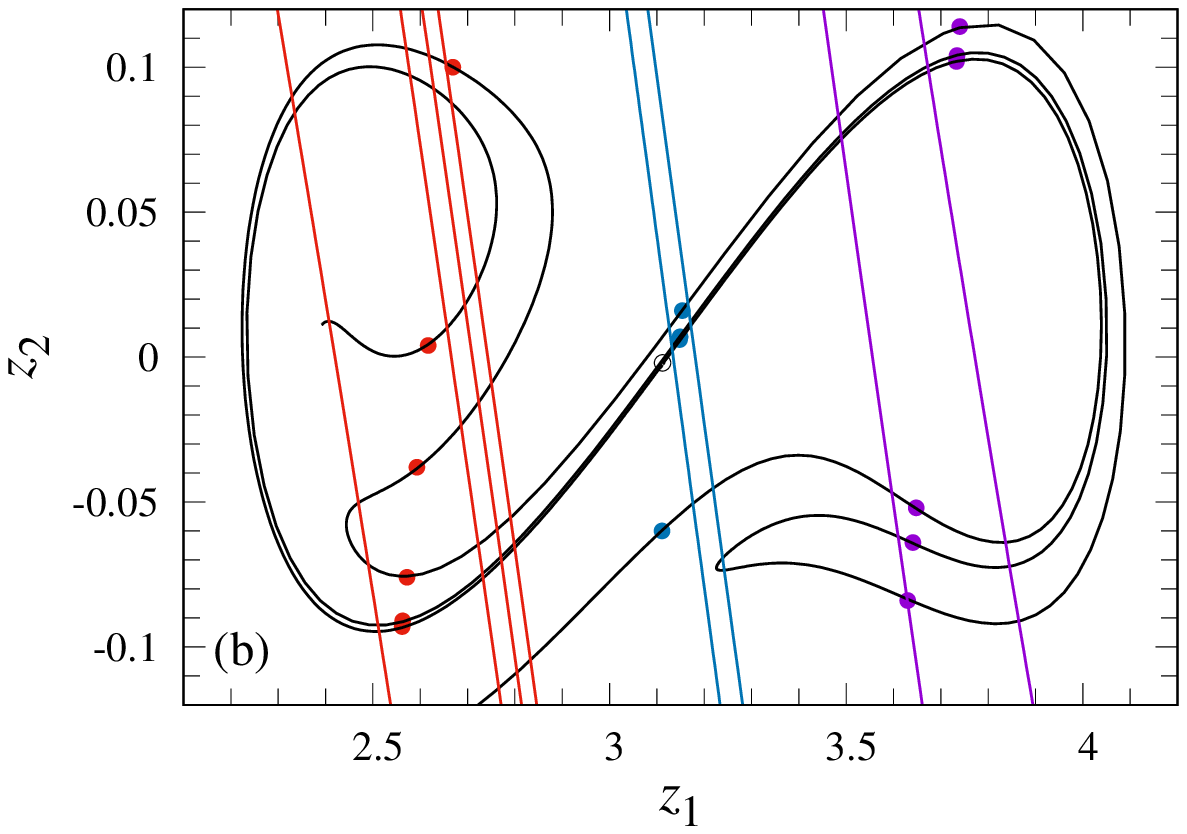}\ 
\includegraphics[scale=0.51]{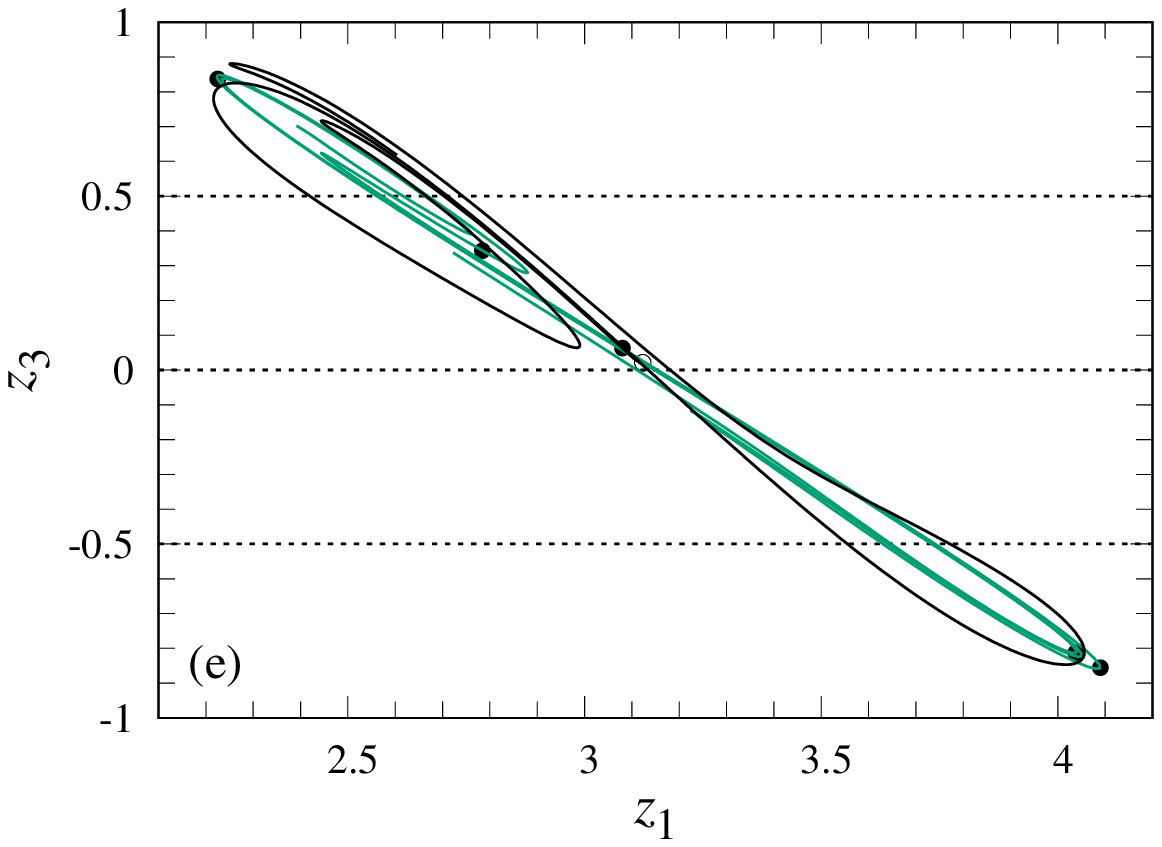}\\[2ex]
\includegraphics[scale=0.51]{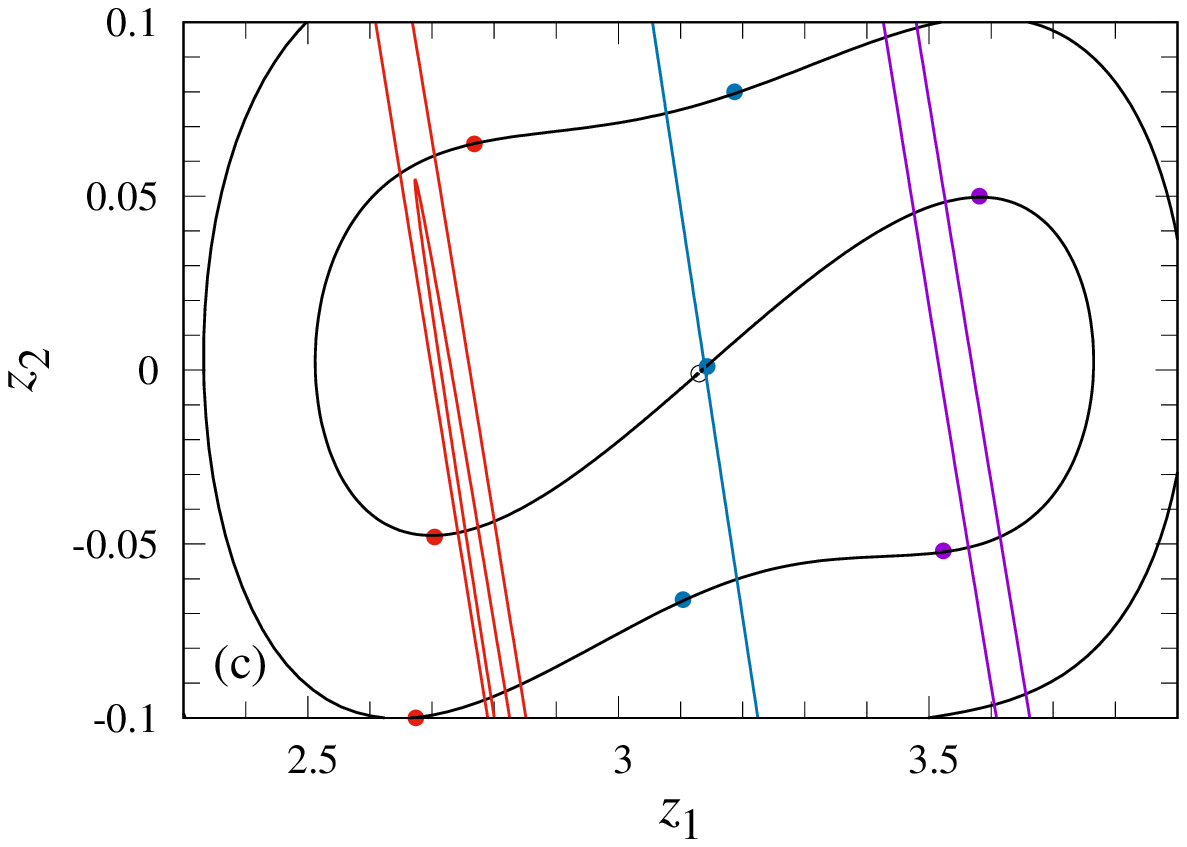}\ 
\includegraphics[scale=0.51]{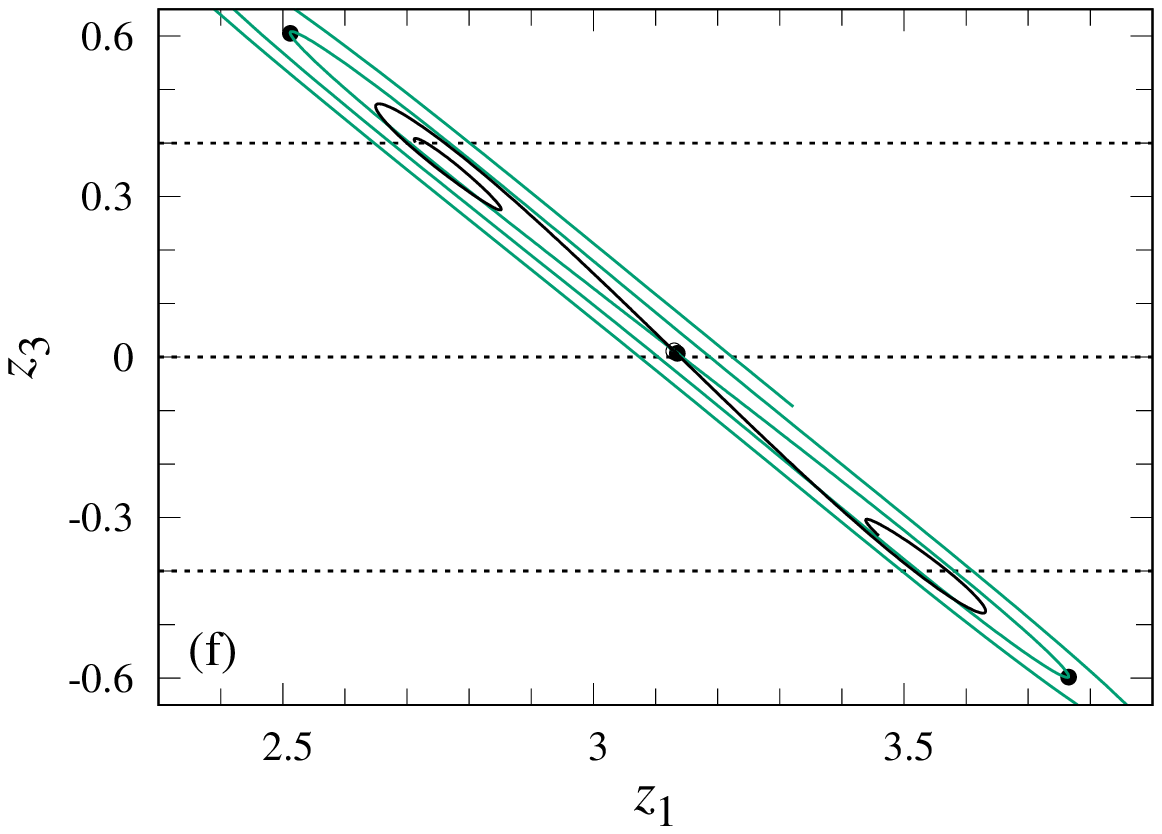}
\end{center}
\caption{Stable and unstable manifolds of a periodic orbit
 in \eqref{eqn:ex} with \eqref{eqn:td} on the Poincar\'e section $\{t=0\mod 2\pi/\omega\}$
 for $\alpha=0.8$, $\theta_0=\pi$, $\delta_0=0.5$, $\delta_1=0.5$,
 $\beta=5$ and $\hat{\omega}=1.4$:
(a) and (d) $\gamma=0.7$;
(b) and (e) $\gamma=0.7471888$;
(c) and (f) $\gamma=0.78$.
In Fig.~(c),
 the small purple, blue and red disks,  respectively, represent
 the intersections of the unstable manifold with the planes $z_3=-0.4$, $0$ and $0.4$,
 and the purple, blue and red lines, respectively, represent
 the intersections of the stable manifolds
 with the planes $z_3=-0.4$, $0$ and $0.4$ onto  the $(z_1,z_2)$-plane.
See the caption of Fig.~\ref{fig:6c} for the meaning of the other lines and symbols
 and those in the other figures.}
\label{fig:6h}
\end{figure}

Figure~\ref{fig:6h} shows the numerically computed stable and unstable manifolds
 of periodic orbits near the origin, the behavior of which are described in Proposition~\ref{prop:6d},
 for $\alpha=0.8$, $\hat{\omega}=1.4$ and $\gamma=0.7$, $0.7471888$ or $0.78$.
It was numerically observed in Fig.~14(b) of \cite{Y99}
 that there exist a pair of homoclinic orbits in \eqref{eqn:ex} with $\theta_\d=\pi$
 (i.e., $\epsilon=0$) for $\gamma\approx0.7471888$.
We observe that these manifolds intersect transversely in Fig.~\ref{fig:6h}(b)
 while they do not in Figs.~\ref{fig:6h}(a) and (c).
Their behavior in Figs.~\ref{fig:6h}(a), (b) and (c) are, respectively, 
 similar to cases~ \textcircled{\scriptsize 1},  \textcircled{\scriptsize 3}
 and \textcircled{\scriptsize 5} of Fig.~\ref{fig:thm4b}(c).

\begin{figure}[t]
\begin{center}
\includegraphics[scale=0.55]{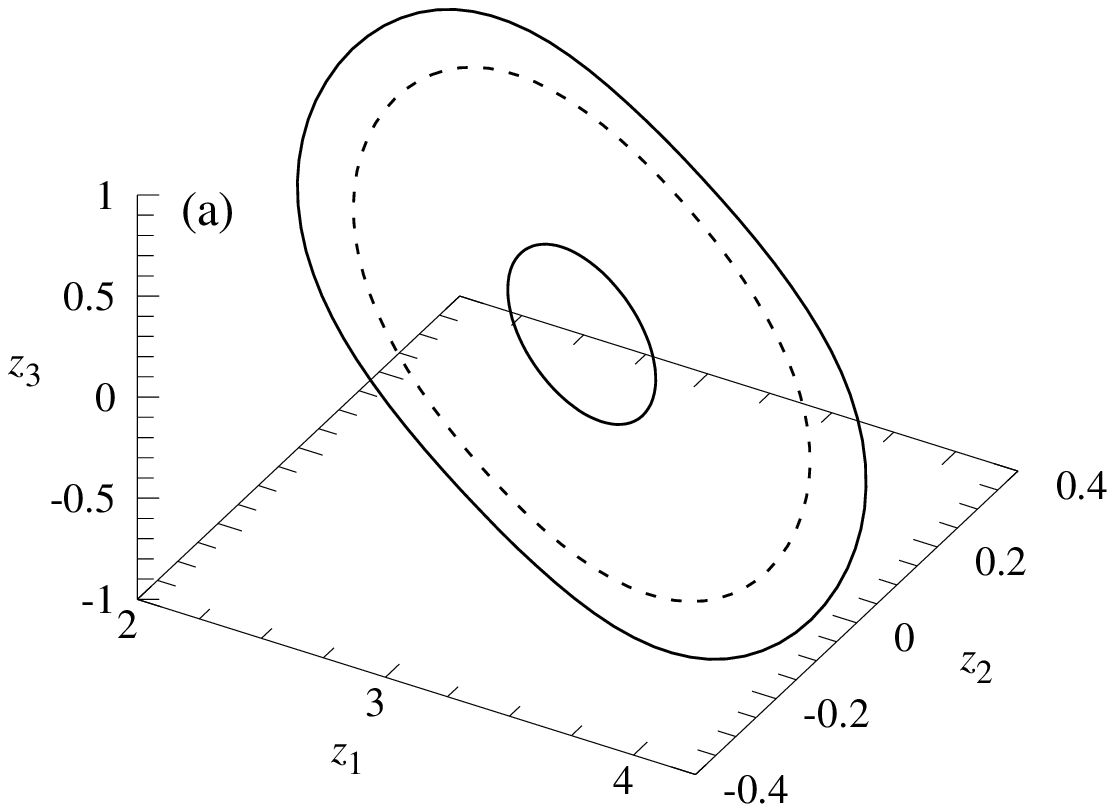}\\
\includegraphics[scale=0.55]{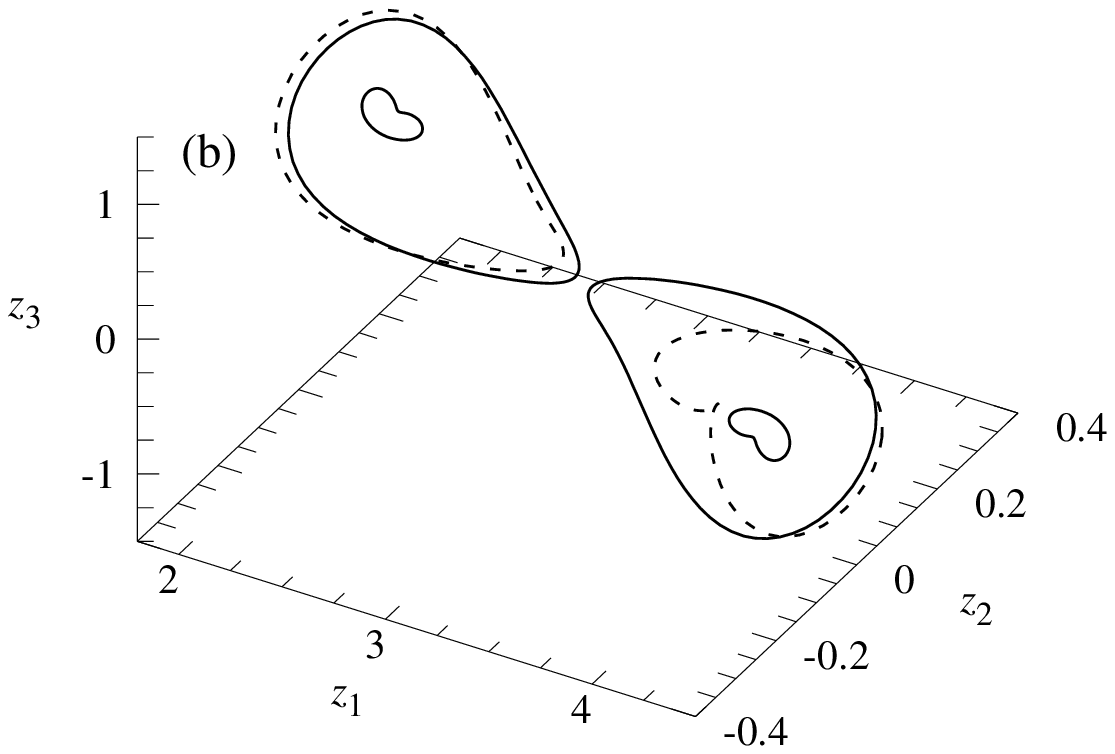}
\includegraphics[scale=0.55]{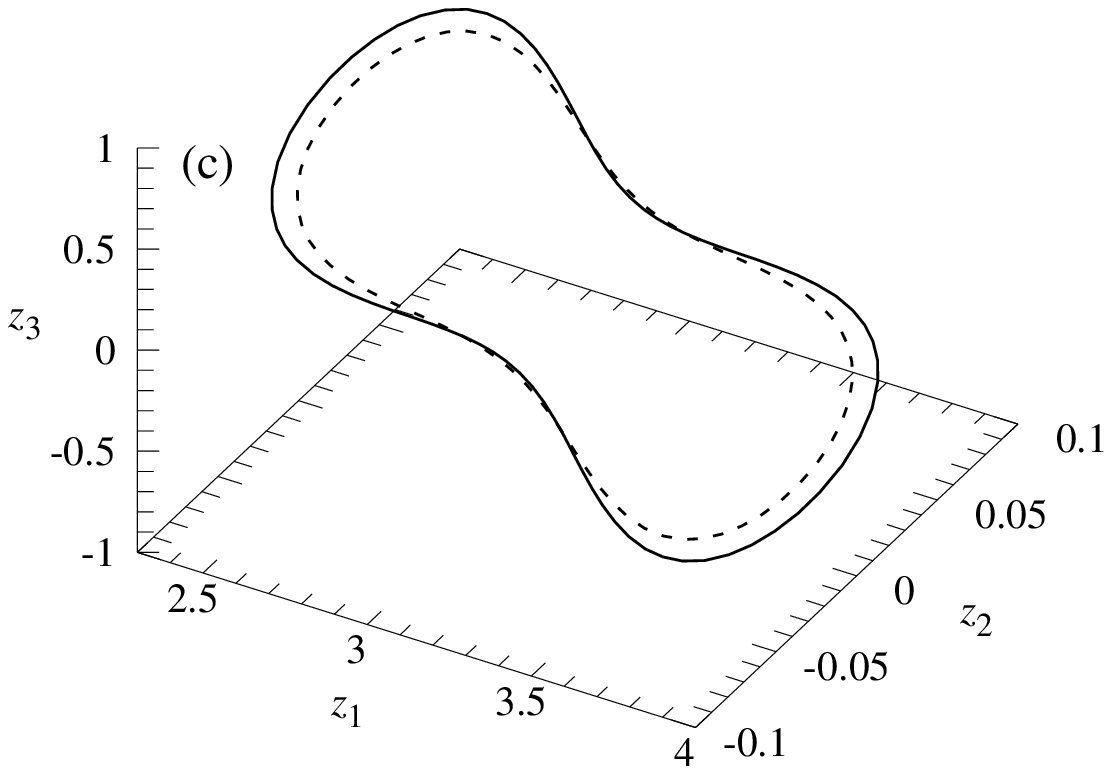}
\end{center}
\caption{Periodic orbits in \eqref{eqn:ex} with \eqref{eqn:td}
 for $\theta_0=\pi$, $\delta_0=0.5$ and $\delta_1=0.5$:
(a) $\alpha=1.2$, $\gamma=1.3$,  $\beta=5$ and $\hat{\omega}=1.4$;
(b) $\alpha=0.8$, $\gamma=0.7$,  $\beta=5$ and $\hat{\omega}=0.8$;
(c) $\alpha=0.9$, $\gamma=0.87$,  $\beta=5$ and $\hat{\omega}=0.8$.
The solid and broken lines, respectively, represent stable and unstable ones.}
\label{fig:6i}
\end{figure}

Figure~\ref{fig:6i} shows numerically computed periodic orbits of period $2\pi/\omega$
 for $\alpha=1.25$ and $\gamma=-1.2$.
Periodic orbits born at saddle-node bifurcations detected
 by Propositions~\ref{prop:6c}, \ref{prop:6e} and \ref{prop:6f}
 are displayed in Figs.~\ref{fig:6i}(a), (b) and (c),  respectively,
 while small periodic orbits, which are continued from the equilibria
 at $z=(\pi,0,0)$ for $(\alpha,\gamma)=(1.2,1.3)$
 and given by \eqref{eqn:a1} for $(\alpha,\gamma)=(0.8,0.7)$ 
 with $\epsilon$ from $\epsilon=0$
 and always exist near them when $\epsilon>0$ is sufficiently small,
 are also plotted in Figs.~\ref{fig:6i}(a) and (b).
One of the periodic orbits bifurcated at the saddle-node bifurcations
 is stable and the other is unstable, as predicted by these propositions.

The values of $\alpha,\gamma$ used in Figs.~\ref{fig:6g}-\ref{fig:6i} are not necessarily
 close to the codimension-two bifurcation point $(\alpha,\gamma)=(1,1)$
 and they are rather far from it in some cases.
Thus, our theoretical results are still valid even in such a situation
 although it does not satisfy our assumption in the theory,
 as in Section~6.2.

\section*{Acknowledgments}
The author would like to thank Takeshi Inoue for his assistance.
This work was partially supported by the JSPS KAKENHI Grant Number
 JP17H02859.


\appendix

\renewcommand{\theequation}{\Alph{section}.\arabic{equation}}

\section{Numerical analyses for \eqref{eqn:ex} with \eqref{eqn:td}} 

In this appendix
 we describe our approach to obtain numerical results of Section~6
  for \eqref{eqn:ex} with \eqref{eqn:td} by the computer software \texttt{AUTO} \cite{DO12}.
The following two cases were considered:
\begin{enumerate}
\setlength{\leftskip}{-1.6em}
\item[(i)]
$\theta_0=0$, $\delta_0=0.2$, $\delta_1=-1.2$, $\beta=5$
 and $\hat{\omega}=1.4$ or $0.8$;
\item[(ii)]
$\theta_0=\pi$, $\delta_0=0.5$, $\delta_1=0.5$, $\beta=5$
 and $\hat{\omega}=1.4$ or $0.8$.
\end{enumerate}
Several values of $\alpha$ and $\gamma$ were taken,
 and the values of $\beta$ and $\hat{\omega}$ were changed
 during some numerical continuations.
We have $(\alpha_0,\gamma_0,\sigma)=(1,-1,1)$ and $(1,1,-1)$
 in cases (i) and (ii), respectively.
We also took $\Delta=1$ in \eqref{eqn:hat}, so that
\[
\omega=\hat{\epsilon}\frac{\alpha_0(\alpha_0+\delta_0)\hat{\omega}}{3\delta_0},\quad
\epsilon=\hat{\epsilon}^4,\quad
\hat{\epsilon}=\frac{3\delta_0 \sqrt{|\tilde{a}_{21}\alpha_1+\tilde{b}_{21}\gamma_1|}}
 {\alpha_0(\alpha_0+\delta_0)},
\]
where $\alpha_1=\alpha-\alpha_0$ and $\gamma_1=\gamma-\gamma_0$.
Numerical results of the codimension-two bifurcations with symmetry
 in \eqref{eqn:ex} with $\theta_\d=\theta_0$ (i.e., $\epsilon=0$ in \eqref{eqn:td})
 for these parameter values are found in \cite{Y99} (see Figs.~10 and 11 there especially).

\subsection{Bifurcations of periodic orbits}

We took the following approach for numerical bifurcation analyses of periodic orbits
 by  the computer tool \texttt{AUTO}
 except for periodic orbits encircling the unperturbed homoclinic orbits in case~(ii),
 such as plotted in Fig.~\ref{fig:6i}(c).
The equilibrium
\begin{align}
z=&(4.024683666180083,0,-0.7727046360165037)\notag\\
& \mbox{or }(2.258501640999503,0,0.7727046360165037)
\label{eqn:a1}
\end{align}
was chosen as the starting solution at $\beta=0$  for $\gamma=0.7$ in case~(ii),
  or $z=(\theta_0,0,0)$ with $\theta_0=0$ or $\pi$ for the other cases,
 and periodic orbits with a period of $2\pi/\omega$
 were numerically continued from the equilibria when $\beta$ was increased.
After that, the periodic orbits with $\beta=5$ were continued with $\gamma$,
 and saddle-node bifurcations were detected.
The saddle-node bifurcations were continued with the two parameters $\alpha,\gamma$,
 and the corresponding bifurcation sets in the $(\alpha,\gamma)$-plane were obtained.
Here the forcing term `$\epsilon\beta\cos\omega t$' was computed
 as a solution to a two-dimensional autonomous system
 as in the \texttt{AUTO} demo \texttt{frc} \cite{DO12}.
 
For periodic orbits encircling  the unperturbed homoclinic orbits in case~(ii),
 such an  orbit at $\beta=0$ was computed in advance by \texttt{AUTO}
 and chosen as the starting solution with $\omega=2\pi/\tilde{T}$,
 where $\tilde{T}$ is its period.
The periodic orbit was numerically continued with $\omega$,
 and the approach stated above was followed after that.
   
\subsection{Computation of stable and unstable manifolds}

As in \cite{Y18},
 we reduce the computation of stable and unstable manifolds of periodic orbits
 to a boundary value problem of differential equations.
See, e.g., Section~5.3 of \cite{Y18} for more details.

\begin{table}[t]
\caption{A pair of saddles in \eqref{eqn:ex} with $\theta_\d=0$
 for $\alpha=1.5$, $\delta_0=0.2$ and $\delta_1=-1.2$.
Here the upper or lower sign is taken simultaneously for each saddle and $z_2=0$.}
\label{tab:a1}
\begin{tabular}{c|c|c}
\hline
$\gamma$ & $z_1$ & $z_3$ \\
\hline
$-1.3$ & $\pm 0.9132926875617018$ & $\pm 0.7915203292201416$ \\
$-1.342915$ & $\pm 0.8056567435744962$ & $\pm 0.7212856838648964$\\
$-1.4$ & $\pm 0.6389467723304353$ & $\pm 0.5963503208417396$\\
\hline
\end{tabular}
\end{table}

From the numerical results of \cite{Y99} (see Fig.~14 there especially)
 we see that there exist pairs of heteroclinic orbits and homoclinic orbits to equilibria
 for $(\alpha,\gamma)\approx(1.5,-1.342915)$ and $(0.8,0.7471888)$
 in cases~(i) and (ii), respectively, when $\beta=0$.
Trajectories on the stable and unstable manifolds of the equilibria for $\beta=0$
 were computed in advance by \texttt{AUTO}
 with the assistance of the \texttt{HomCont} library \cite{CK94,CKS96,DO12}.
Here the equilibria depend on the value of $\gamma$ in case~(i) 
 while the equilibrium does not change at $z=(\pi,0,0)$ in case~(ii).
The loci of the equilibria in case~(i) used for Fig.~\ref{fig:6c}
 are given in Table~\ref{tab:a1}.
The computed trajectories were chosen as the starting solutions
 and continued with $\beta$ or their time length or endpoints.
The saddle-type periodic orbits were also computed at the same time.

The reader may consider that
 we can numerically continue (transverse) homoclinic and heteroclinic orbits
 and compute homoclinic and heteroclinic bifurcation sets
 like the periodic orbits and saddle-node bifurcation sets in Appendix~A.1.
However, we did not succeed in it
 since the period of the saddle-type periodic orbits is very long 
 and the moduli of related positive and negative eigenvalues are very small.
 

\end{document}